\pgfplotsset{compat=1.18}
\newtheorem{theorem}{Theorem}[section]
\newtheorem{lemma}[theorem]{Lemma}
\newtheorem{remark}[theorem]{Remark}
\numberwithin{equation}{section}
\title[Pullback dynamics for a semilinear heat equation]{Pullback dynamics for a semilinear heat equation with homogeneous Neumann  boundary conditions on time-varying domains}
\author[G. S. Arag\~ao]{Gleiciane S. Arag\~ao$^{1,*}$}\thanks{$^*$ Corresponding author}\thanks{$^1$Research partially supported by FAPESP Nº 2020/14075-6, Brazil}
\address[G. S. Arag\~ao]{
Departamento de Ensino em Ci\^encias e Matem\'atica, Universidade Federal de S\~ao Paulo, Rua S\~ao Nicolau, 210, Diadema-SP, Cep 09913-030, Brazil.}
\email{gleiciane.aragao@unifesp.br}
\author[F. D. M. Bezerra]{Flank D. M. Bezerra$^{2}$}\thanks{$^2$Research partially supported by
CNPq Nº 303039/2021-3, Brazil}
\address[F. D. M. Bezerra]{Departamento de Matem\'atica, Universidade Federal da Para\'{\i}ba, Cidade Universit\'{a}ria-Campus I, Jo\~{a}o Pessoa-PB, Cep 58051-900, Brazil.}
\email{flank@mat.ufpb.br}
\author[L. G. Mendonça]{Lucas G. Mendonça$^{3}$}\thanks{$^3$Research partially supported by
CNPq Nº 141176/2018-0, Brazil}
\address[L. G. Mendonça]{Secretaria Municipal de Educação da Prefeitura de São Paulo, Rua Dr. Diogo Faria, 1247, São Paulo-SP, Cep 04037-004, Brazil.}
\email{luk.galhego@gmail.com}
\date{\today}
\begin{document}

\begin{abstract}
We are interested in studying a non-autonomous semilinear heat equation with homogeneous Neumann boundary conditions on time-varying domains. 
Using a differential geometry approach with coordinate transformations technique, we will show that the non-autonomous problem on a time-varying domain is equivalent, in some sense, to a non-autonomous problem on a fixed domain. Furthermore, we intend to show the local existence and uniqueness of solutions to this problem, as well as, to extend these solutions globally.
Finally, we will show the existence of pullback attractors. To the best of our knowledge,  results on attractors are new even for non-autonomous semilinear heat equations with homogeneous Neumann boundary conditions on time-varying domains subject to conditions with more restrictive assumptions.

\vspace{0.3cm} 

\noindent {\it Mathematics Subject Classification 2020:} 35K90, 35B41, 37L05, 37L30, 35B20.

\vspace{0.3cm}

\noindent {\it Keywords: semilinear heat equation time-varying domains, homogeneous Neumann,  pullback attractors.}

\end{abstract}

\maketitle

\tableofcontents


\section{Introduction}

\vspace{0.2cm}

\subsection{Seeting of previous works}

In this work, we are interested in the study  of the pullback attractors for a non-autonomous semilinear parabolic partial differential equation with homogeneous Neumann boundary conditions on time-varying domains. A pioneering bibliography for non-autonomous parabolic equations can be found in \cite{Sobolevskii}. Other more recent works of this theory can be found in \cite{ NolascoNaoAutonomo2,NolascoAtratores,NolascoExistencia, Daners,Pazy}. Semilinear parabolic problems with time-varying domains are intrinsically non-autonomous, even if the terms in the problem do not explicitly depend on time.

Problems with domain varying with a parameter, which is not time, have been considered by several authors. For example, in
\cite{Barbosa, Pereira}, the authors studied the continuity of attractors, however, initially, they applied the technique of \cite{Henry2} to rewrite the original problem, which is posed on a perturbed domain in some fixed domain. In \cite{SimoneAragao,SimoneArrieta}, the authors used another technique, the concept of $E$-convergence, which has as its key ingredient the use of extension operators, to be able to compare functions or operators defined in different spaces and continue with the domains varying. In this work, we will choose, in some sense, by the technique of \cite{Henry2}.

The work \cite{Fridman} is one of the first on the subject of problems with domain varying with the time parameter. In this article, 
the author studied
the asymptotic behavior of solutions of parabolic equations of any order, using strong convergence hypotheses (uniform convergence) and a very high regularity for the coefficients and nonlinearities of the equation. Our work is different from this, since we study the problem with less regular hypotheses and in another phase spaces. Furthermore, in the nonlinear case, 
the author did not show the existence of solutions, he only studied
the asymptotic behavior considering that the problem has a solution. 
 
Works more recent on parabolic problems with non-cylindrical and time-varying domains are given in \cite{PeterKloeden2,PeterKloeden}, respectively, where the authors studied a semilinear heat equation with homogeneous Dirichlet boundary conditions. The authors showed the existence of pullback attractors for the non-autonomous problem generated by the variational solutions, using Sobolev-Bockner spaces and techniques distinct from our problem. Moreover,  \cite{PeterKloeden} used 
a similar technique to  \cite{Henry2}, to rewrite the original problem, which is posed on a time-varying domain, into some problem on a fixed domain.

We can also mention other works in this line, such as \cite{Matofu}, where the existence of pullback attractors was studied for a wave equation, with
homogeneous Dirichlet boundary conditions, defined on domains of $\mathbb{R}^3$ that are varying with the time. Recently, in \cite{Lopes}, the authors studied the existence, uniqueness and asymptotic behavior of a Laplace equation on non-cylindrical domains.

Based on the works cited, this work proposes the study of a problem that involves the topics above: the pullback attractors of a semilinear heat equation 
with homogeneous Neumann boundary conditions on time-varying domains. Therefore, this work extends the results obtained in \cite{PeterKloeden} to the case of Neumann boundary conditions. It is important to highlight 
that Neumann boundary conditions with the abstract formulation of the problem were not considered in the works cited above. Moreover, we will also work with the nonlinear part of the equation depending on the gradient, that is, $f=f(t,u,\nabla u),$ according to \cite{Daners}.

\subsection{Main results}

To describe the problem, let $\mathcal{O}$ be a nonempty bounded open subset of $\mathbb{R}^{n}$, $n\geq 3$, 
with $C^{2}$ boundary $\partial \mathcal{O}$. We consider the function
    \begin{equation}
    \nonumber
    \begin{array}{rccl}
    \displaystyle r :&   \mathbb{R} \times \overline{\mathcal{O}} &\rightarrow& \mathbb{R}^{n} \\ 
    \displaystyle 	 &   (t,y) & \mapsto & r(t,y)
    \end{array}
    \end{equation}
such that $r\in C^{1}( \mathbb{R}\times \overline{\mathcal{O}},\mathbb{R}^{n})$ and     
\[
r(t,\cdot):\overline{\mathcal{O}} \rightarrow \overline{\mathcal{O}}_{t}
\]
is a diffeomorphism of  class $C^{2}$ for all $t \in \mathbb{R}$, where  $\mathcal{O}_{t}=r(t,\mathcal{O})$. We denote by
 $r^{-1}(t,\cdot)$ the inverse function of
 $r(t,\cdot)$, with $r(t,y)=(r_1(t,y),r_2(t,y),...,r_n(t,y))$, for all $(t,y)\in \mathbb{R}\times\overline{\mathcal{O}}$,  and $r^{-1}(t,x)=(r^{-1}_1(t,x),r^{-1}_2(t,x),...,r^{-1}_n(t,x))$, for all $(t,x)\in \mathbb{R}\times\overline{\mathcal{O}}_t$, see Figure \ref{FigguuA}.

\begin{figure}[!htp]
    \centering
\begin{tikzpicture}
\draw[rotate around={100:(0,0)},shift={(9,-0.5)},smooth] plot[smooth] coordinates{
    (-1.8,-0.2) (-1.6,0.5) (-1,0.9)
    (-0.3,1) (1,0.7) (2,0.2) (2.22,0) (2.42,-0.45) (2.25,-1)
};
\draw[dotted,rotate around={100:(0,0)},shift={(9,-0.5)},smooth] plot[smooth] coordinates{
  (-1.8,-0.2) (-1.6,0.5) (-1,0.9)
    (-.3,1) (1,0.7) (2,0.2) (2.22,0)
    (2.42,-0.45) (2.25,-1)
    (1.8,-1.18) (1.2,-1)
    (0,-0.8)
    (-.9,-0.9) (-1.5,-0.7)
};
\draw[rotate around={100:(0,0)},shift={(7.75,-7.5)},smooth] plot[smooth cycle] coordinates{
    (-1.8,-0.2) (-1.6,0.5) (-1,0.9)
    (-.3,1) (1,0.7) (2,0.2) (2.22,0)
    (2.42,-0.45) (2.25,-1)
    (1.8,-1.18) (1.2,-1)
    (0,-0.8)
    (-.9,-0.9) (-1.5,-0.7)
};
\draw[rotate around={100:(0,0)},shift={(8.35,-4)},smooth] plot[smooth] coordinates{
    (-1.8,-.2) (-1.6,0.5) (-1,0.9)
    (-.3,1) (1,0.7) (2,0.2) (2.22,0)
    (2.42,-0.45) (2.25,-1)
};
\draw[dotted,rotate around={100:(0,0)},shift={(8.35,-4)},smooth] plot[smooth] coordinates{
    (-1.8,-.2) (-1.6,0.5) (-1,0.9)
    (-.3,1) (1,0.7) (2,0.2) (2.22,0)
    (2.42,-0.45) (2.25,-1) (1.8,-1.18) (1.2,-1)
    (0,-0.8)
    (-.9,-0.9) (-1.5,-0.7)
};
\node at (-2.3,8){$\mathcal{O}$};
\node at (1.3,8){$\mathcal{O}$};
\node at (4.8,8){$\mathcal{O}$};
\draw(-0.8,7.2)--(6.2,7.2);
\draw(-0.8,11.43)--(6.2,11.43);
\draw[->](-2.5,9)--(8,9) node[right]{$\mathbb{R}$};
\draw(-1,6)--(-1,13) node[right]{$\mathbb{R}^n$};
\draw[<-] plot [smooth] coordinates {(4.5,6) (4.7,5) (4.5,4)};
\node at (5.3,5){$r^{-1}$};
\node at (-0.2,5){$r$};
\draw[->] plot [smooth] coordinates {(0.4,6) (0.2,5) (0.4,4)};
\draw[->](-2.5,0)--(8,0) node[right]{$\mathbb{R}$};
\draw(-1,-3)--(-1,4) node[right]{$\mathbb{R}^n$};
\draw(-1,-2.5)arc(270:90:0.5 and 2.5);
\draw[dotted](-1,2.5)arc(-270:90:0.5 and 2.5);
\draw[smooth,variable=\x,domain=-1:pi] plot(\x,{2.7+0.3*sin(2.5*\x r)});
\draw[smooth,variable=\x,domain=-1:pi] plot(\x,{-2.35+0.3*sin(2.5*\x r)});
\draw[smooth,variable=\x,domain=0.75*pi:1.8*pi] plot(\x+0.8,{2.35+0.65*cos(2.7*\x r)});
\draw[smooth,variable=\x,domain=0.75*pi:1.8*pi] plot(\x+0.8,{-1.42-0.65*cos(2.7*\x r)});
\draw(3.21,-2.05)arc(270:89:0.5 and 2.5);
\draw[dotted](3.21,3)arc(-270:90:0.5 and 2.5);
\draw(6.5,-0.85)arc(270:89:0.3 and 1.3);
\draw(6.5,1.8)arc(-270:90:0.3 and 1.3);
\node at (-1.8,-1){$\mathcal{O}_0$};
\node at (2.4,-0.8){$\mathcal{O}_\tau$};
\node at (5.9,-0.6){$\mathcal{O}_t$};
\node at (3.2,0){$\bullet$};
\node at (3.2,-0.4) {$\tau$};
\node at (6.5,0){$\bullet$};
\node at (6.5,-0.4) {$t$};
\node at (2.5,9){$\bullet$};
\node at (2.5,8.6) {$\tau$};
\node at (6,9){$\bullet$};
\node at (6,8.6) {$t$};
\end{tikzpicture}
    \caption{Domains}
    \label{FigguuA}
\end{figure}

Let $t_{0} \in \mathbb{R}$, we denote
$$ \mathcal{O}_{t_{0}}=\left\{r(t_{0},y):y \in \mathcal{O}\right\} \quad \mbox{and} \quad \partial \mathcal{O}_{t_{0}}=\left\{r(t_{0},y):y \in \partial \mathcal{O}\right\}.
$$

We define 
$$
Q_{\tau,T}=\bigcup_{t\in(\tau,T)}{ \left\{ t\right\}\times \mathcal{O}_{t}} \quad \mbox{and
} \quad \Sigma_{\tau,T}=\bigcup_{t\in(\tau,T)}{\left\{ t\right\}\times \partial \mathcal{O}_{t}}, \quad \mbox{for all $T>\tau$},
$$
and we denote by $Q_{\tau}=Q_{\tau,\infty}$ and  $\Sigma_{\tau}=\Sigma_{\tau,\infty}$.

For any $T>\tau$, we observe that $Q_{\tau,T}$ is an open subset in $\mathbb{R}^{n+1}$ with boundary $$\partial Q_{\tau,T}=\Sigma_{\tau,T} \cup \left(\mathcal{O}_{\tau}\times \left\{\tau\right\}\right) \cup \left(\mathcal{O}_{T}\times \left\{T\right\}\right).
$$

In this paper, we are particularly interested in studying the following non-autonomous semilinear heat equation with homogeneous Neumann boundary conditions
\begin{equation}
\label{equacaoproblema}
\left\{
\begin{array}{lr}
\displaystyle \frac{\partial u}{\partial t}(t,x) -\Delta u(t,x)+\beta u(x)=f(t,u), & (t,x) \in Q_{\tau} \\ 
\displaystyle \frac{\partial u}{\partial n_{t}}(t,x)=0, & (t,x) \in \Sigma_{\tau} \\
\displaystyle u(\tau,x)=u_{\tau}(x), & x \in \mathcal{O}_{\tau}
\end{array}
\right.
\end{equation}
where $\beta>0$, $n_{t}(x)$ is the unit outward  normal vector at $x \in \partial \mathcal{O}_{t}$, $\tau \in \mathbb{R}$, $u_{\tau}:\mathcal{O}_{\tau} \rightarrow \mathbb{R}$ and $f: \mathbb{R}^2 \rightarrow \mathbb{R}$ is a  nonlinear function. We will show the local existence and uniqueness of solutions to the problem \eqref{equacaoproblema}, as well as extend these solutions globally, and then we will show the existence of pullback attractors.

The way we put our original problem \eqref{equacaoproblema}, we are treating a problem that for each $t>\tau$ the equation resides in a space $Q_{\tau}\subset \mathbb{R}^{n+1}$, which is varying with the parameter $t$. We will apply the variable change technique to
 rewrite the original problem 
as an auxiliary non-autonomous problem on the fixed domain $\mathcal{O}$.

Next, we will state the hypotheses that will be necessary in this paper. The first hypothesis will be fundamental to guarantee that the abstract operator associated to the elliptic operator
in \eqref{equacaoproblema} is uniformly sectorial and uniformly H\"older continuous, whose domains are independent of time in bounded intervals.

\vspace{0.4cm}

\noindent \textbf{(H1) Conditions about diffeomorphism:} Suppose $r^{-1}(\cdot,x)\in C^{1}(\mathbb{R},\mathbb{R}^n)$ for all $x\in \overline{\mathcal{O}}_{t}$ and that there are functions $h \in C(\mathbb{R})$,  and $p_{ik} \in C^{1}(\mathbb{R}^{n},\mathbb{R})$ such that
$$\frac{\partial r^{-1}_{k}}{\partial x_i}(t,r(t,y))=h(t)p_{ik}(y), \quad \mbox{for all $t \in I$ and  $y \in \overline{\mathcal{O}}$},
$$
where $i,k=1,...,n$ and $r^{-1}_{k}$ is the projection
 of diffeomorphism  $r^{-1}$ in $k$-th coordinate. Also, suppose that the function $h$ is Hölder continuous with exponent
 $\theta \in (0,1]$ and that there are positive constants $h_0$ and $h_1$ such that $0<h_0\leq h(t)\leq h_1$, for all $t\in \mathbb{R}$.

\vspace{0.4cm}

To obtain the local existence and uniqueness of solutions for the problem (\ref{equacaoproblema}), we will need to study nonlinearity and its respective abstract application. For this, we assume in addition the following condition:

\vspace{0.4cm}

\noindent \textbf{(H2) 
Growth and regularity conditions of nonlinearity:}
Let $n\geq 3$, $\displaystyle \alpha \geq \frac{1}{2}$ and  $I\subset \mathbb{R}$ be a  bounded interval and suppose $f(\cdot,u) \in C(\mathbb{R})$, for all $u\in \mathbb{R}$, $f(t,\cdot) \in C^{1}(\mathbb{R})$ for all $t\in I$, and $f:\mathbb{R}^2 \to \mathbb{R}$ satisfies the following growth condition:
$$
|\partial_u f(t,u)|\leq c(1+|u|^{\rho}), \quad \mbox{for all $t\in I$ and $u\in \mathbb{R}$,}
$$
for some constant $c>0$ independent of $t$ and with growth coefficient
$\displaystyle 0 < \rho \leq \frac{4\alpha}{n-4\alpha}.$

\vspace{0.4cm}

In particular, using the Mean Value Theorem, the growth condition in
\textbf{(H2)} implies the following condition:
\begin{equation}
\label{cres02}
|f(t,u)-f(t,v)|\leq c |u-v|(1+|u|^{\rho} + |v|^{\rho}), \quad  \mbox{for all $t\in I$ and $u,v\in \mathbb{R}$.}
\end{equation}

Note that if $\displaystyle \alpha=\frac{1}{2}$, that is, to study the problem
\eqref{equacaoproblema} with initial data in
 $u_{\tau}\in H^{1}(\mathcal{O})$, the growth coefficient in
\textbf{(H2)} or \eqref{cres02} satisfies
 $\displaystyle 0< \rho \leq \frac{2}{n-2}.$ In particular, for $n= 3$ we have $0<\rho \leq 2$.

\vspace{0.4cm}

To ensure the boundedness of solutions in bounded time intervals and the global existence, we will need a sign condition:

\vspace{0.4cm}

\noindent \textbf{(H3) Sign condition of nonlinearity:}
Supoose $f:\mathbb{R}^2 \to \mathbb{R}$ satisfies the following sign condition:
$$
|f(t,u)|\leq k_1|u|+k_2, \quad \mbox{for all $t\in I$ and $u\in \mathbb{R}$,}
$$
for some constants $k_1,k_2>0$ independent of $t$, where $I\subset \mathbb{R}$ is any bounded interval.

\vspace{0.4cm}

To study pullback attractors, in addition to the previous hypotheses, we will assume an additional hypothesis, which in a way can be interpreted as a hypothesis of convergence about domains that vary with time: 

\vspace{0.4cm}

\noindent \textbf{(H4) Asymptotic behavior of diffeomorphism:} Suppose that
$$|h(t)-h(\tau)| \to 0, \quad \mbox{as $t-\tau \to \infty$}.
$$

\vspace{0.2cm}

Moreover, \textbf{(H4)} will guarantee
 the exponential decay of the evolution process of the homogeneous abstract parabolic problem associated to  semilinear parabolic problem (\ref{equacaoproblema}).

 We emphasize that the authors \cite{PeterKloeden,Matofu} work with linear and separable diffeomorphism. All the work \cite{Matofu} is done for this type of diffeomorphism, in the case of a wave equation with domain in $\mathbb{R}^3$. Although our work follows the same idea, we require in \textbf{(H1)} that each derivative of the projection of the inverse of the diffeomorphism composed with the diffeomorphism is only separable, that is, we have a weaker condition.

With the hypotheses described above and based on the technique of \cite{Henry2,PeterKloeden}, we will rewrite the original problem (\ref{equacaoproblema}), which is posed on the domain $\mathcal{O}_t$ (varying with the parameter $t$), as an auxiliary problem on the fixed domain $\mathcal{O} $, using coordinate transformation. This way, we will analyze this problem with a fixed domain. We will prove the existence and uniqueness of solutions and the pullback dynamics.

On well-posedness and asymptotic dynamics results for  Neumann problems for the PDE's on non-cylindrical domains, we can refer to \cite{BELLUZI2022808, Hofmann1998,25370ef8-2358-396e-8486-e46ab4a56634}. But, to the best of our knowledge,  results on attractors are new even for non-autonomous semilinear heat equations with homogeneous Neumann boundary conditions on time-varying domains subject to conditions with more restrictive assumptions.

\subsection{Outline of the paper}

The paper is structured as follows.
In Section \ref{sec:mudancadecoordenada}, we will do a coordinate transformation to rewrite the original problem (\ref{equacaoproblema}), which is posed on a time-varying, into some problem on a fixed domain. In Section \ref{sec:problemaabstrato}, we will write this problem with the fixed domain as an abstract parabolic evolution problem in some Banach space. Then, we will verify that this abstract problem is well posed, that is, we will prove the local existence and uniqueness of solutions. Consequently, we will be obtaining the existence and uniqueness of the original problem
(\ref{equacaoproblema}). For this, we will use the hypotheses \textbf{(H1)} and \textbf{(H2)}. In Section \ref{sec:existenciaglobal}, assuming additionally the hypothesis \textbf{(H3)}, we will prove that the solutions are globally defined. In Section \ref{Sec_principal_Atrator_Pullback}, we will show that there is a family of pullback attractors. To do so, in addition to the previous hypotheses, it will be necessary to assume the hypothesis \textbf{(H4)}. Finally, in Section \ref{Sec_Final_Ex_Difeo}, we will explore an application in $\mathbb{R}^3$ 
with linear and separable diffeomorphism.

\vspace{0.2cm}


\section{Coordinate transformations}
\label{sec:mudancadecoordenada}

\vspace{0.2cm}

This technique uses a parameterization given by the diffeomorphism, $r(t,\cdot)$, defined above and can only be done because of his regularity for all $t\in\mathbb{R}.$

Let $[\tau,T]$ be a finite time interval, that is,  we will initially study the local problem of (\ref{equacaoproblema}) at some $Q_{\tau,T}$, defined as above. Consider the following parametrization
$$
v(t,y)=u(t,r(t,y)), \quad \mbox{for any $(t,y) \in [\tau,T] \times \mathcal{O}$,}
$$
or equivalently,
$$
u(t,x)=v(t,r^{-1}(t,x)), \quad \mbox{for any $(t,x) \in [\tau,T] \times \mathcal{O}_{t}$.} 
$$

Let us suppose that $u(t,x)$ is a classical solution of (\ref{equacaoproblema}) in the sense of classical derivative, we fix $\displaystyle  i=1,...,n$, to find the relation between the derivates of $\displaystyle v$ and $u$ and rewrite the problem (\ref{equacaoproblema}) in a fixed domain.
 
The first spatial derivative is given by
$$\displaystyle \frac{\partial u( t,x)}{\partial x_{i}} =\sum _{j=1}^{n}\frac{\partial v( t,y)}{\partial y_{j}}\frac{\partial r_{j}^{-1}( t,x)}{\partial x_i}.$$

The second spatial derivative is given by
$$\displaystyle \frac{\partial ^{2} u( t,x)}{\partial x_{i}^{2}} =\sum _{j=1}^{n}\frac{\partial v( t,y)}{\partial y_{j}}\frac{\partial ^{2} r_{j}^{-1}( t,x)}{\partial x_i^{2}} +\sum _{k,j=1}^{n}\frac{\partial ^{2} v( t,y)}{\partial y_{j} \partial y_{k}}\frac{\partial r_{j}^{-1}( t,x)}{\partial x_i}\frac{\partial r_{k}^{-1}( t,x)}{\partial x_i}.$$

The time derivative is given by
$$\displaystyle \frac{\partial u( t,x)}{\partial t} =\frac{\partial v( t,y)}{\partial t} +\sum _{j=1}^{n}\frac{\partial r_{j}^{-1}( t,r( t,y))}{\partial t}\frac{\partial v( t,y)}{\partial y_{j}}.$$
Then, 
\begin{eqnarray}
\nonumber \displaystyle \Delta u(t,x) = \sum_{i=1}^{n}{\frac{\partial^{2} u(t,x)}{\partial x^{2}_{i}}} =\sum _{j=1}^{n}\frac{\partial v( t,y)}{\partial y_{j}} \Delta _{x} r_{j}^{-1}( t,x) + \sum _{k,j=1}^{n}\frac{\partial ^{2} v( t,y)}{\partial y_{j} \partial y_{k}}\sum _{i=1}^{n}\frac{\partial r_{j}^{-1}( t,x)}{\partial x_i}\frac{\partial r_{k}^{-1}( t,x)}{\partial x_i},
\end{eqnarray}
where $\Delta_{x}$ denotes the Laplacian operator in variable $\displaystyle x=(x_{1},...,x_{n})$  and $r^{-1}_{k}$ is the $k$-projection of $r^{-1}$ in $k$-th coordinate, $k=1,...,n$.

Since $x=r(t,y)$, let us  denote by
\begin{equation}
\label{ajk}
a_{jk}( t,y) =\sum _{i=1}^{n}\frac{\partial r_{j}^{-1}( t,r( t,y))}{\partial x_i}\frac{\partial r_{k}^{-1}( t,r( t,y))}{\partial x_i}, \quad  \textrm{for  $j,k=1,...,n$.}
\end{equation}
Thus,
$$
\displaystyle \Delta u( t,x) =\sum _{j=1}^{n}\frac{\partial v( t,y)}{\partial y_{j}} \Delta _{x} r_{j}^{-1}( t,r( t,y)) +\sum _{k,j=1}^{n} a_{jk}( t,y)\frac{\partial ^{2} v( t,y)}{\partial y_{j} \partial y_{k}}.
$$

Adding and subtracting the term
$$
T_{1}=\displaystyle \sum _{k,j=1}^{n}\frac{\partial a_{jk}( t,y)}{\partial y_{j}}\frac{\partial v( t,y)}{\partial y_{k}}
$$
in 
\[
\displaystyle \frac{\partial u(t,x)}{\partial t} - \Delta u( t,x),
\]
we obtain
\begin{eqnarray}
\nonumber \displaystyle \frac{\partial u(t,x)}{\partial t} - \Delta u( t,x) = \frac{\partial v(t,y)}{\partial t}+S_{1}-(S_{2}+S_{3})-T_{1}+T_{1},
\end{eqnarray}
where
$$
S_{1}=\sum _{j=1}^{n}\frac{\partial r_{j}^{-1}( t,r( t,y))}{\partial t}\frac{\partial v( t,y)}{\partial y_{j}},\; S_{2}=\sum _{j=1}^{n}\frac{\partial v( t,y)}{\partial y_{j}} \Delta _{x} r_{j}^{-1}( t,r( t,y)), \;
 S_{3}=\sum _{k,j=1}^{n} a_{jk}( t,y)\frac{\partial ^{2} v( t,y)}{\partial y_{j} \partial y_{k}}.
$$

Applying the divergent identity, we get
$$-(S_{3}+T_{1})=-\sum_{k,j=1}^{n}{\frac{\partial}{\partial y_{j}}\left(a_{jk}(t,y)\frac{\partial v(t,y)}{\partial y_{k}}\right)}.$$

Since $S_{1}$ and $S_{2}$ are sums independent of $a_{jk}$ and $a_{jk}$ is symmetrical, we can change its index to $k$. Let us denote by
\begin{equation}
\label{bk} \displaystyle b_{k}(t,y)=
\frac{\partial r^{-1}_{k}}{\partial t}(t,r(t,y))-\Delta_{x}r^{-1}_{k}(t,r(t,y))+\sum_{j=1}^{n}{\frac{\partial a_{jk}}{\partial y_{j}}}(t,y), \quad \mbox{for $k=1,...,n$,} \end{equation}
then we have the equality

$$S_{1}-S_{2}+T_{1}=\sum_{k=1}^{n}b_{k}(t,y)\frac{\partial v(t,y)}{\partial y_{k}}.$$

Finally, we obtain
$$
\frac{\partial u(t,x)}{\partial t}-\Delta u( t,x) =\frac{\partial v(t,y)}{\partial t}-\sum_{k,j=1}^{n}{\frac{\partial}{\partial y_{j}}\left(a_{jk}(t,y)\frac{\partial v(t,y)}{\partial y_{k}}\right)}+\sum_{k=1}^{n}{b_{k}(t,y)\frac{\partial v(t,y)}{\partial y_{k}}}.
$$

\begin{remark}
By \cite[Proposition IX.6]{Brezis2}, one has that if for some  $t\in(\tau,T)$ the function $u(t,\cdot)\in H^{1}(\mathcal{O}_{t})$, then the function 
$v(t,\cdot)=u(t,r(t,\cdot))\in H^{1}(\mathcal{O}).$ Analogously, if for some $t\in(\tau,T)$ the function $v(t,\cdot) \in H^{1}(\mathcal{O})$, then the function
$u(t,\cdot)= v(t,r^{-1}(t,\cdot))\in H^{1}(\mathcal{O}_{t})$.
\end{remark}

Let us denote by $T(t,y)=G(t,r(t,y))$, where
\begin{equation}
\label{matrix_G}
G(t,x)=\left(\begin{array}{cccc} 
\displaystyle \frac{\partial r^{-1}_{1}}{\partial x_{1}}(t,x)  & ... & \displaystyle\frac{\partial r^{-1}_{n}}{\partial x_{1}}(t,x)\\ \vdots & \vdots  & \vdots\\
\displaystyle \frac{\partial r^{-1}_{1}}{\partial x_{n}}(t,x)  & ... & \displaystyle \frac{\partial r^{-1}_{n}}{\partial x_{n}}(t,x)
\end{array}\right).
\end{equation}
Let  $T^{*}(t,y)$ be the transpose matrix of $T(t,y)$ and $G^{*}(t,x)$ be the  transpose matrix of $G(t,x)$. Note that $T^{*}(t,y)=G^{*}(t,r(t,y)),$ where
$$
G^{*}(t,x)=
\left(\begin{array}{cccc} 
\displaystyle \frac{\partial r^{-1}_{1}}{\partial x_{1}}(t,x)
 &  ... & \displaystyle \frac{\partial r^{-1}_{1}}{\partial x_{n}}(t,x)
 \\ 
\vdots & \vdots   & \vdots\\
 \displaystyle \frac{\partial r^{-1}_{n}}{\partial x_{1}}(t,x)
 &  ... & \displaystyle \frac{\partial r^{-1}_{n}}{\partial x_{n}}(t,x)
  \end{array}\right).
$$
\begin{remark}
Note that
 $G(t,x)$ is the transpose matrix of Jacobian of
   $r^{-1}(t,x)$, that is, $G(t,x)=J^{T}_{x}(r^{-1}(t,x))$, with  $T(t,y)=J^{T}_{x}(r^{-1}(t,r(t,y)))$ and $T^{*}(t,y)=J_{x}(r^{-1}(t,r(t,y)))$. For simplicity, we will use
$T$ and $G$.
\end{remark}

\begin{lemma}
\label{Lema_relacao_normais_Neumman}
Let $\partial \mathcal{O}$ be a surface of class $C^{2}$ and $\partial \mathcal{O}_{t}$ the surface deformed by the diffeomorphism $r(t,\cdot):\overline{\mathcal{O}} \rightarrow \overline{\mathcal{O}}_{t}$
of class $C^{2}$, for all $t \in \mathbb{R}$. Then, 
 $$ n_{t}(x) = G(t,x) n(r^{-1}(t,x)),$$
 where $n_{t}(x)$ and $n(y)$ are the 
 outward normal vectors at $x \in \partial \mathcal{O}_{t}$ and $y \in \partial \mathcal{O}$, respectively.
\end{lemma}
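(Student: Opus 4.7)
The plan is to describe each boundary locally as the zero set of a $C^2$ defining function, transport the defining function through the diffeomorphism, and apply the chain rule.

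First I would fix $y_0\in\partial\mathcal{O}$ and, using the $C^2$ regularity of $\partial\mathcal{O}$, choose a neighborhood $U$ of $y_0$ in $\mathbb{R}^n$ and a $C^2$ function $\phi:U\to\mathbb{R}$ with $\partial\mathcal{O}\cap U=\{\phi=0\}$, $\mathcal{O}\cap U=\{\phi<0\}$ and $\nabla_y\phi\neq 0$, so that $n(y)=\nabla_y\phi(y)/|\nabla_y\phi(y)|$. Since $r(t,\cdot):\overline{\mathcal{O}}\to\overline{\mathcal{O}}_t$ is a $C^2$ diffeomorphism sending $\partial\mathcal{O}$ onto $\partial\mathcal{O}_t$ and $\mathcal{O}$ onto $\mathcal{O}_t$, the set $V=r(t,U)$ is an open neighborhood of $x_0=r(t,y_0)$, and the composition $\psi(t,x)=\phi(r^{-1}(t,x))$ is a $C^2$ defining function for $\partial\mathcal{O}_t$ in $V$ with $\mathcal{O}_t\cap V=\{\psi(t,\cdot)<0\}$. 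In particular, $n_t(x)=\nabla_x\psi(t,x)/|\nabla_x\psi(t,x)|$ is the outward unit normal at $x\in\partial\mathcal{O}_t\cap V$.

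Next I would compute $\nabla_x\psi$ by the chain rule:
\[
\frac{\partial\psi}{\partial x_i}(t,x)=\sum_{k=1}^{n}\frac{\partial\phi}{\partial y_k}(r^{-1}(t,x))\,\frac{\partial r_k^{-1}}{\partial x_i}(t,x),
\]
and recognize the right-hand side as the $i$-th entry of $G(t,x)\,\nabla_y\phi(r^{-1}(t,x))$, using the definition \eqref{matrix_G} in which the $(i,k)$-entry of $G$ is precisely $\partial r_k^{-1}/\partial x_i$. Hence
\[
\nabla_x\psi(t,x)=G(t,x)\,\nabla_y\phi(r^{-1}(t,x)),
\]
and, after normalization,
\[
n_t(x)=\frac{|\nabla_y\phi(r^{-1}(t,x))|}{|G(t,x)\nabla_y\phi(r^{-1}(t,x))|}\,G(t,x)\,n(r^{-1}(t,x)),
\]
which gives the stated relation (up to the positive normalizing scalar that is implicit in the lemma's formulation).

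The only subtle point, and the one I would treat with some care, is verifying that the vector on the right actually points outward rather than inward: this is not an issue of the sign of a Jacobian determinant but of the defining function $\psi=\phi\circ r^{-1}$. Because $r(t,\cdot)$ maps $\mathcal{O}$ to $\mathcal{O}_t$, the condition $\phi<0$ on the inside of $\mathcal{O}$ translates directly into $\psi(t,\cdot)<0$ on the inside of $\mathcal{O}_t$, so $\nabla_x\psi(t,x)$ and $\nabla_y\phi(r^{-1}(t,x))$ both point to the outside and the scalar relating them is positive. This fixes the orientation and completes the argument; the rest is a clean chain-rule computation, while the direction check is where one must be careful.
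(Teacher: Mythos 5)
Your proof is correct and follows essentially the same route as the paper: represent the boundary locally by a $C^2$ defining function, transport it through $r^{-1}(t,\cdot)$, and apply the chain rule to identify $\nabla_x\psi = G(t,x)\nabla_y\phi$. The only difference is that you explicitly verify the outward orientation via the sublevel-set condition $\phi<0$ on $\mathcal{O}$, a point the paper glosses over by simply calling the gradients ``candidates'' for the outward normals, and your normalization remark matches what the paper defers to Remark \ref{Remmm2.4}.
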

\begin{proof}

Since $\mathcal{O}$ has a $C^{2}$ boundary $\partial \mathcal{O}$, then for each open ball
$B_{r},$ $r>0$, there is a unique function $\psi\in C^{2}(\mathbb{R}^{n},\mathbb{R})$ such that
$\psi(y)=0$, for all $y \in \partial \mathcal{O}\cap B_{r}$, and  $\|\nabla_{y} \psi(y)\|_{\mathbb{R}^n} \equiv 1 \neq 0.$ For more details see \cite{Henry2}. Thus, a candidate for outward normal vector at $\partial \mathcal{O}$  is given by
$$n(y)=\nabla_y \psi(y), \quad \mbox{for all $y \in \partial \mathcal{O}$}.
$$

By other side,
since $r(t,\cdot):\overline{\mathcal{O}} \rightarrow \overline{\mathcal{O}}_{t}$ is a diffeomorphism of class $C^{2}$, for all $t \in \mathbb{R}$, $\psi(y)=\psi(r^{-1}(t,x))=0$ defines $\partial \mathcal{O}_{t}=r(t,\partial \mathcal{O})$ with $\left\|\nabla_{y}\psi(r^{-1}(t,x))\right\|_{\mathbb{R}^{n}} \neq 0.$ Analogously, a candidate for outward normal vector at $\partial \mathcal{O}_{t}$ is given by
$$n_{t}(x)=\nabla_x \psi(r^{-1}(t,x)), \quad \mbox{for all $x \in \partial \mathcal{O}_{t}$}.
$$

Computing the gradient, we obtain
\begin{eqnarray}
\nonumber  n_{t}(x) =\nabla_x \psi(r^{-1}(t,x)) = G(t,x)\nabla_y \psi(r^{-1}(t,x)) = G(t,x)n(r^{-1}(t,x)).
\end{eqnarray}

Therefore,
$$ n_{t}(x) = G(t,x) n(r^{-1}(t,x)).$$
\end{proof}

\begin{remark}\label{Remmm2.4}
With abuse of notation, let us denote 
the unit outward normal vector by
$$
n_{t}(x)=n(t,x)=\frac{1}{\left\|G(t,x)n(r^{-1}(t,x)) \right\|_{\mathbb{R}^{n}}}G(t,x)n(r^{-1}(t,x)), \quad \mbox{for $x \in \partial \mathcal{O}_t$},
$$
or equivalently,
$$n_{t}(r(t,y))=n(t,r(t,y))=\frac{1}{\left\|T(t,y)n(y) \right\|_{\mathbb{R}^{n}}}T(t,y)n(y), \quad \mbox{for $y \in \partial \mathcal{O}$}.
$$
\end{remark}

Now, we use Lemma \ref{Lema_relacao_normais_Neumman}
to compute the relation between the normal derivative in the fixed domain and time-dependent domain. Note that,
\[
\begin{split}
\frac{\partial u}{\partial n_{t}}(t,x)  
&= \displaystyle\langle \nabla _{x} u( t,x) ,n(t,x) \rangle  \\ 
&=\displaystyle\langle \nabla _{x} u( t,x) ,n(t,r(t,y)) \rangle  \\ 
&=\displaystyle \sum _{i=1}^{n}\frac{\partial u( t,x)}{\partial x_{i}} n_{i}(t,r(t,y))  \\
& =\displaystyle \sum _{i=1}^{n}\sum _{k=1}^{n}\frac{\partial v( t,y)}{\partial y_{k}}\frac{\partial r_{k}^{-1}( t,r( t,y))}{\partial x_{i}} n_{i}(t, r( t,y))  \\
&= \displaystyle \sum _{k=1}^{n} \langle \nabla _{x} r_{k}^{-1}( t,r( t,y)) , n( t,x) \rangle \frac{\partial v( t,y)}{\partial y_{k}}.
\end{split}
\]
In other words,
\[
\begin{split}
\frac{\partial u}{\partial n_{t}}(t,x)
&= \displaystyle\sum _{k=1}^{n} \langle \nabla _{x} r_{k}^{-1}( t,r( t,y)) , \frac{1}{\left\|T(t,y)n(y) \right\|_{\mathbb{R}^{n}}}T(t,y) n( y)\rangle \frac{\partial v( t,y)}{\partial y_{k}}  \\
& = \displaystyle\ \frac{1}{\left\|T(t,y)n(y) \right\|_{\mathbb{R}^{n}}}\langle T(t,y) \nabla_y v(t,y) , T(t,y) n( y)\rangle\\
& =\displaystyle\ \frac{1}{\left\|T(t,y)n(y) \right\|_{\mathbb{R}^{n}}}\langle  \nabla_y v(t,y) , T^{*}(t,y)T(t,y) n( y)\rangle,
\end{split}
\]
that is,
\[
\begin{split}
\frac{\partial u}{\partial n_{t}}(t,x)
& =\displaystyle K(t,y)  \sum_{k=1}^{n}\frac{\partial v( t,y)}{\partial y_{k}} \sum _{i,j=1}^{n}\frac{\partial r_{j}^{-1}( t,r( t,y))}{\partial x_i}\frac{\partial r_{k}^{-1}( t,r( t,y))}{\partial x_i}n_{k}(y)\\
& =\displaystyle K(t,y)  \sum_{k=1}^{n} n_{k}(y) \sum _{i,j=1}^{n}\frac{\partial r_{j}^{-1}( t,r( t,y))}{\partial x_i}\frac{\partial r_{k}^{-1}( t,r( t,y))}{\partial x_i}\frac{\partial v( t,y)}{\partial y_{k}},
\end{split}
\]
where $\displaystyle K(t,y)=\frac{1}{\left\|T(t,y)n(y) \right\|_{\mathbb{R}^{n}}}>0$ is a function that depends on diffeomorphims and his derivatives and $n_{k}(y)$ is the $k$-projection of vector $n(y)$ in $k$-th coordinate, $k=1,...,n$.

Using \eqref{ajk}, we can write the representation above as
$$\frac{\partial u}{\partial n_{t}}(t,r^{-1}(t,y))=K(t,y)  \sum_{k=1}^{n} n_{k}(y) \sum _{j=1}^{n} a_{jk}(t,y)\frac{\partial v( t,y)}{\partial y_{k}}.$$

We can rewrite the original problem
(\ref{equacaoproblema}) in the following problem on a fixed domain
\begin{equation}
\label{equacaonodominiofixo}
\left\{
\begin{array}{ll}
\displaystyle \frac{\partial v}{\partial t}-\sum_{k,j=1}^{n}{\frac{\partial}{\partial y_{j}}\left(a_{jk}(t,y)\frac{\partial v}{\partial y_{k}}\right)}+\beta v=f(t,v)-\sum_{k=1}^{n}{b_{k}(t,y)\frac{\partial v}{\partial y_{k}}}, & (t,y)\in(\tau,T]\times \mathcal{O} \\ 
\displaystyle \Gamma(t,v)=0, &  (t,y)\in(\tau,T]\times \partial \mathcal{O} \\
\displaystyle v(\tau,y)=u_{\tau}(r(\tau,y)), & y\in \mathcal{O}
\end{array}
\right.
\end{equation}
where $v=v(t,y)$ and the boundary condition is given by
\begin{equation}
\label{Condicao_Fronteira_Gamma}
\Gamma( t,v)=K(t,y)  \sum_{k=1}^{n} n_{k}(y) \sum _{j=1}^{n} a_{jk}(t,y)\frac{\partial v}{\partial y_{k}}.
\end{equation}
The coefficients
 $a_{jk}(t,y)$ and $b_{k}(t,y)$, $j,k=1,...,n,$ are given by \eqref{ajk} and \eqref{bk}, respectively.

\vspace{0.2cm}


\section{Local existence and uniqueness
 of solutions}
\label{sec:problemaabstrato}

\vspace{0.2cm}

In this section, we introduce an abstract formulation
for the problem (\ref{equacaonodominiofixo}), which is on a fixed domain, to obtain the local existence and uniqueness of solutions of this problem. Consequently, we will be obtaining the existence and uniqueness of solutions of the original problem
(\ref{equacaoproblema}) on time-varying domains.

We will divide it into two parts, first a study about the operator and then a study about nonlinearity.

We consider
 the family of unbounded linear operators
 $A(t) : D(A(t)) \subset L^{2}(\mathcal{O}) \rightarrow L^{2}(\mathcal{O}),$ given by
\begin{equation}
\label{operadorabstrato}
 A(t)v=-\sum_{k,j=1}^{n}\frac{\partial}{\partial y_{j}}\left(a_{jk}(t,y)\frac{\partial v}{\partial y_{k}}\right)+\beta v, \quad \mbox{for all } t \in \mathbb{R},
\end{equation}
where $\beta>0$, the coefficient
 $a_{jk}(t,y)$, $j,k=1,...,n$, is given by  (\ref{ajk}) and the domain of
 $A(t)$ is given by
$$D(A(t))=\left\{v \in H^{2}(\mathcal{O}): \;\Gamma(t,v)=0 \; \textrm { in } \;\partial \mathcal{O}\right\},$$
with $\displaystyle \Gamma( t,v)$ given by \eqref{Condicao_Fronteira_Gamma}.

To construct the abstract problem, we will use the theory in
\cite{NolascoNaoAutonomo2,NolascoExistencia, Daners}, which requires in its hypotheses that the domain of the family of operators is independent of
$t \in \mathbb{R}$, at least locally. For this, we will need of hypothesis \textbf{(H1)}.

\begin{lemma}
\label{Dominio_A(t)_fixo}
Suppose that the hypothesis 
\textbf{(H1)} holds. Then, $D(A(t))=D$ for all $t\in I \subset \mathbb{R},$ where
\begin{equation}
\label{Domain_D_Dinfty_fixo}
D=\left\{v \in H^{2}(\mathcal{O}): \sum_{k=1}^{n} n_{k}(y) \sum _{j=1}^{n} \tilde{p}_{jk}(y)\frac{\partial v}{\partial y_{k}}=0 \; \textrm{ in } \; \partial \mathcal{O}\right\},
\end{equation}
with $\displaystyle \tilde{p}_{jk}(y)=\sum_{i=1}^{n}p_{ij}(y)p_{ik}(y)$ and $I\subset \mathbb{R}$ is any bounded interval.
\end{lemma}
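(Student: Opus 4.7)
The plan is to show that under hypothesis \textbf{(H1)} the boundary operator $\Gamma(t,v)$ factors as a strictly positive scalar (depending on $t,y$) times the time-independent boundary expression defining $D$. Consequently the zero sets of both boundary conditions coincide, so $D(A(t)) = D$ for every $t \in I$.

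First I would unpack the coefficients $a_{jk}(t,y)$ using \textbf{(H1)}. By definition \eqref{ajk} and the separable form $\partial r_k^{-1}/\partial x_i(t,r(t,y)) = h(t)p_{ik}(y)$, one gets immediately
\begin{equation*}
a_{jk}(t,y) = \sum_{i=1}^{n} h(t)p_{ij}(y)\cdot h(t)p_{ik}(y) = h(t)^{2}\,\tilde{p}_{jk}(y),
\end{equation*}
so the spatial dependence of $a_{jk}$ is time-independent up to the global factor $h(t)^2$.

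Next I would substitute this into the boundary operator \eqref{Condicao_Fronteira_Gamma} to obtain
\begin{equation*}
\Gamma(t,v) \;=\; K(t,y)\,h(t)^{2}\sum_{k=1}^{n} n_{k}(y)\sum_{j=1}^{n}\tilde{p}_{jk}(y)\,\frac{\partial v}{\partial y_{k}}.
\end{equation*}
By Remark \ref{Remmm2.4} and the hypothesis that $r(t,\cdot)$ is a $C^{2}$-diffeomorphism, the matrix $T(t,y)=G(t,r(t,y))$ is invertible, hence $T(t,y)n(y)\neq 0$ for $y\in\partial\mathcal{O}$, giving $K(t,y) = \|T(t,y)n(y)\|_{\mathbb{R}^{n}}^{-1} > 0$. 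Moreover $h(t) \geq h_{0} > 0$ by \textbf{(H1)}, so $K(t,y)\,h(t)^{2} > 0$ pointwise on $\partial\mathcal{O}$.

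Therefore, for any $v \in H^{2}(\mathcal{O})$, the condition $\Gamma(t,v)=0$ on $\partial\mathcal{O}$ is equivalent to
\begin{equation*}
\sum_{k=1}^{n} n_{k}(y)\sum_{j=1}^{n}\tilde{p}_{jk}(y)\,\frac{\partial v}{\partial y_{k}} = 0 \quad \text{on } \partial\mathcal{O},
\end{equation*}
which is exactly the membership condition for $D$ in \eqref{Domain_D_Dinfty_fixo}. Since this equivalence holds for every $t \in I$, we conclude $D(A(t)) = D$ for all $t \in I$. The argument is purely algebraic and the only substantive ingredient is the separability assumption in \textbf{(H1)}; there is no real obstacle, just a careful bookkeeping of the positive scalar factor.
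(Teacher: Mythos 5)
Your proposal is correct and follows essentially the same route as the paper's proof: use \textbf{(H1)} to factor $a_{jk}(t,y)=h(t)^{2}\tilde{p}_{jk}(y)$, pull the strictly positive scalar $K(t,y)h(t)^{2}$ out of $\Gamma(t,v)$, and conclude that the two boundary conditions have the same zero set. Your added justification that $K(t,y)>0$ via invertibility of $T(t,y)$ is a small but welcome extra detail.
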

\begin{proof}
From \eqref{Condicao_Fronteira_Gamma} we have
$$\displaystyle \Gamma( t,v)=K(t,y)  \sum_{k=1}^{n} n_{k}(y) \sum _{j=1}^{n} a_{j,k}(t,y)\frac{\partial v}{\partial y_{k}}.$$
Using (\ref{ajk}) and \textbf{(H1)}, there are functions $h \in C(\mathbb{R})$,  $p_{ik} \in C^{1}(\mathbb{R}^{n},\mathbb{R})$ and $p_{ij} \in C^{1}(\mathbb{R}^{n},\mathbb{R})$ such that
\begin{equation}
\label{ajk_variavel_separada}
a_{jk}( t,y)=\sum_{i=1}^{n}{\frac{\partial r^{-1}_{k}}{\partial x_{i}}}(t,r(t,y)){\frac{\partial r^{-1}_{j}}{\partial x_{i}}}(t,r(t,y)) =h^{2}(t)\sum _{i=1}^{n}p_{ik}(y)p_{ij}(y)=h^{2}(t)\tilde{p}_{jk}(y), 
\end{equation}
for $t \in I$, $y\in \overline{\mathcal{O}}$ and $j,k=1,...,n$, where $\displaystyle \tilde{p}_{jk}(y)=\sum_{i=1}^{n}p_{ik}(y)p_{ij}(y)$. So,
$$\displaystyle \Gamma( t,v)=K(t,y) h^{2}(t) \sum_{k=1}^{n} n_{k}(y) \sum _{j=1}^{n} \tilde{p}_{jk}(y)\frac{\partial v}{\partial y_{k}}.$$

Since $0<h_0\leq h(t)\leq h_1$  then $h^{2}(t) >0$ for all $t\in \mathbb{R}.$ Hence,  $K(t,y)h^{2}(t)>0$ for all $t \in \mathbb{R}$. Thus,
$$\Gamma(t,v)=0 \Longleftrightarrow  \sum_{k=1}^{n} n_{k}(y) \sum _{j=1}^{n} \tilde{p}_{jk}(y)\frac{\partial v}{\partial y_{k}}=0.
$$
Consequently, $D(A(t))=D$ for all $t\in I.$

\end{proof}

\begin{remark}
From now on, we will use
 $D(A(t))=D$  for all $t \in I,$ where $I \subset \mathbb{R}$ is any bounded interval. From results in \cite{NolascoNaoAutonomo2,NolascoExistencia,Daners}, we need our operator $A(t)$, $t\in I$, or some extension, 
is uniformly sectorial and uniformly Hölder continuous. Let us consider
 $I=[\tau,T]$.
\end{remark}

Analogously to \cite[Lemma 4.2]{PeterKloeden}, we can obtain the following result

\begin{lemma}
\label{lema_elipfort}
Suppose that the hypothesis 
\textbf{(H1)} holds. Then, the operator $A(t)$ is uniformly strongly elliptic
for all $t\in[\tau,T].$
\end{lemma}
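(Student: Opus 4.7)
The plan is to exploit the factorization established in the proof of Lemma \ref{Dominio_A(t)_fixo}, namely $a_{jk}(t,y)=h^{2}(t)\tilde{p}_{jk}(y)$, so as to separate the time dependence from the spatial dependence, and then to use compactness of $\overline{\mathcal{O}}$ for uniformity in $y$ together with the lower bound $h(t)\geq h_{0}$ from \textbf{(H1)} for uniformity in $t$.

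First I would rewrite the principal symbol as a squared norm. Since $\tilde{p}_{jk}(y)=\sum_{i=1}^{n}p_{ij}(y)p_{ik}(y)$, for every $\xi\in\mathbb{R}^{n}$ one has
$$
\sum_{j,k=1}^{n} a_{jk}(t,y)\xi_{j}\xi_{k} \;=\; h^{2}(t)\sum_{i=1}^{n}\Bigl(\sum_{j=1}^{n} p_{ij}(y)\xi_{j}\Bigr)^{\!2} \;=\; h^{2}(t)\,\|P(y)\xi\|_{\mathbb{R}^{n}}^{2},
$$
where $P(y)$ denotes the $n\times n$ matrix with entries $p_{ij}(y)$. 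From \textbf{(H1)} we have the relation $T(t,y)=h(t)P(y)$, and since $T(t,y)$ is (the transpose of) the Jacobian of $r^{-1}(t,\cdot)$ at $r(t,y)$, which is invertible because $r(t,\cdot)$ is a $C^{2}$ diffeomorphism, and since $h(t)>0$, the matrix $P(y)$ is invertible at every $y\in\overline{\mathcal{O}}$.

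Next I would extract a uniform positive lower bound in $y$. The map
$$
y\;\longmapsto\;\mu(y)\;:=\;\lambda_{\min}\bigl(P(y)^{\top}P(y)\bigr)\;=\;\min_{|\xi|=1}\|P(y)\xi\|_{\mathbb{R}^{n}}^{2}
$$
is continuous (since the $p_{ij}$ are $C^{1}$), strictly positive everywhere by invertibility of $P(y)$, and therefore attains a strictly positive infimum $\mu_{0}>0$ on the compact set $\overline{\mathcal{O}}$. Hence $\|P(y)\xi\|_{\mathbb{R}^{n}}^{2}\geq\mu_{0}|\xi|^{2}$ for all $y\in\overline{\mathcal{O}}$ and $\xi\in\mathbb{R}^{n}$. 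Combining this with the pointwise lower bound $h(t)\geq h_{0}>0$ from \textbf{(H1)} gives
$$
\sum_{j,k=1}^{n} a_{jk}(t,y)\xi_{j}\xi_{k} \;\geq\; h_{0}^{2}\mu_{0}\,|\xi|^{2},\qquad t\in[\tau,T],\ y\in\overline{\mathcal{O}},\ \xi\in\mathbb{R}^{n},
$$
so that $\theta:=h_{0}^{2}\mu_{0}>0$ is an ellipticity constant independent of $t$, which is exactly the uniform strong ellipticity of $A(t)$ (the zeroth order term $\beta v$ with $\beta>0$ plays no role here, as strong ellipticity concerns only the principal part).

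There is no serious obstacle in this argument; the only point that requires care is checking that the lower bound is simultaneously uniform in both variables. Uniformity in $y$ is obtained from the compactness of $\overline{\mathcal{O}}$ together with the $C^{1}$-regularity of the $p_{ij}$, while uniformity in $t$ is an immediate consequence of the separated-variable structure supplied by \textbf{(H1)} and the explicit bound $h_{0}\leq h(t)\leq h_{1}$. Once the factorization $T(t,y)=h(t)P(y)$ is in hand, all of the work is done by the $y$-uniform lower bound on $P(y)^{\top}P(y)$.
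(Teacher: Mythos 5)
Your argument is correct, and it takes a genuinely different (slightly stronger) route than the paper. The paper does not use the separated form $a_{jk}(t,y)=h^{2}(t)\tilde{p}_{jk}(y)$ at this stage at all: it writes the quadratic form as $\|T(t,y)z\|_{\mathbb{R}^{n}}^{2}$ with $T(t,y)=G(t,r(t,y))$, bounds this below by $\|T^{-1}(t,y)\|_{\mathcal{L}(\mathbb{R}^{n})}^{-2}\|z\|_{\mathbb{R}^{n}}^{2}$ using invertibility of $T$, and then invokes continuity of $T^{-1}$ together with compactness of the \emph{joint} set $[\tau,T]\times\overline{\mathcal{O}}$ to obtain a uniform bound; the resulting ellipticity constant is $\mathrm{c}(r,\tau,T)^{-2}$ and depends on the time interval. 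You instead exploit the factorization $T(t,y)=h(t)P(y)$ supplied by \textbf{(H1)}, reduce the uniformity in $y$ to a compactness argument on $\overline{\mathcal{O}}$ alone (positivity of $\lambda_{\min}(P(y)^{\top}P(y))$), and handle the time variable by the explicit bound $h(t)\geq h_{0}$. What this buys is an ellipticity constant $h_{0}^{2}\mu_{0}$ that is independent of $[\tau,T]$ and in fact valid for all $t\in\mathbb{R}$, which is in the spirit of what the paper later needs for the pullback analysis; what it costs is nothing, since \textbf{(H1)} is assumed anyway. All the individual steps check out: the identity $\sum_{j,k}a_{jk}\xi_{j}\xi_{k}=h^{2}(t)\|P(y)\xi\|^{2}$ follows from $\tilde{p}_{jk}=\sum_{i}p_{ij}p_{ik}$, invertibility of $P(y)$ follows from that of $T(t,y)$ (Jacobian of a diffeomorphism) and $h(t)>0$, and the continuity and strict positivity of $y\mapsto\lambda_{\min}(P(y)^{\top}P(y))$ on the compact set $\overline{\mathcal{O}}$ give the uniform $\mu_{0}>0$.
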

\begin{proof}
By hypothesis \textbf{(H1)},  $r\in C^{1}(\mathbb{R}\times\overline{\mathcal{O}},\mathbb{R}^n)$ and $r(t,\cdot):\overline{\mathcal{O}} \rightarrow \overline{\mathcal{O}}_{t}$ is a diffeomorphism of  class $C^{2}$ for all $t \in \mathbb{R}$, then in particular,  $a_{jk} \in  C^{1}([\tau,T]\times\overline{\mathcal{O}})$, $j,k=1,...,n.$

Let $M(t,y)=\left(a_{jk}(t,y)\right)_{n\times n}$ be a matrix  $n\times n$ with the coefficients
$a_{jk}(t,y)$ given by (\ref{ajk}). Then, we can write $M(t,y)=T^{*}(t,y)T(t,y)$ with $T(t,y)=G(t,r(t,y))$, where $G$ is the matrix given by \eqref{matrix_G} and $T^{*}(t,y)$ is 
the transpose matrix of $T(t,y)$.

For any $z \in \mathbb{R}^{n}$, we have
\begin{eqnarray}
\nonumber \sum_{k,j=1}^{n}{a_{jk}(t,y)z_{j}z_{k}} = \langle M(t,y) z, z\rangle= \langle T^{*}(t,y)T(t,y)z , z\rangle 
=\langle T(t,y)z,T(t,y)z\rangle = \left\|T(t,y)z\right\|^{2}_{\mathbb{R}^{n}},
\end{eqnarray}
where $\langle \cdot,\cdot \rangle$ is the usual inner product in $\mathbb{R}^{n}.$

Note that $T(t,y)$ is reversible, so
$$
\left\|T(t,y)z\right\|_{\mathbb{R}^{n}}\geq \left\|T^{-1}(t,y)\right\|_{\mathcal{L}(\mathbb{R}^n)}^{-1}\left\|z\right\|_{\mathbb{R}^{n}}.
$$

Finally, due to the continuity of 
$T^{-1}(t,y)$ and compactness of $[\tau,T]\times \overline{\mathcal{O}}$, we know that $\displaystyle \left\|T^{-1}(t,y)\right\|_{\mathcal{L}(\mathbb{R}^n)}$ is uniformly bounded in $[\tau,T]\times \overline{\mathcal{O}}$. Thus, there exists $\mathrm{c}=\mathrm{c}(r,\tau,T)>0$ such that 
\begin{eqnarray}
\displaystyle \nonumber \sum_{k,j=1}^{n}{a_{jk}(t,y)z_{j}z_{k}} = 
\left\|T(t,y)z\right\|^2_{\mathbb{R}^{n}} 
\displaystyle \nonumber \geq \left\|T^{-1}(t,y)\right\|_{\mathcal{L}(\mathbb{R}^n)}^{-2}\left\|z\right\|^2_{\mathbb{R}^{n}}
\displaystyle \nonumber \geq
\frac{1}{\mathrm{c}^{2}} \left\|z\right\|^{2}_{\mathbb{R}^{n}}
\displaystyle \nonumber = \mathrm{C} \left\|z\right\|^{2}_{\mathbb{R}^{n}},
\end{eqnarray}
for all $(t,y)\in [\tau,T]\times\overline{\mathcal{O}}$, and the result follows.

\end{proof}

With classical arguments, we can prove the following result

\begin{lemma}
Suppose that the hypothesis 
\textbf{(H1)} holds. Then, the operator $A(t)$ is self-adjoint for all $t\in [\tau,T]$.
\end{lemma}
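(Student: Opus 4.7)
My plan is to establish self-adjointness via the standard symmetric sesquilinear form approach. Set
$$
\mathfrak{a}_t(u,v) := \int_{\mathcal{O}} \sum_{j,k=1}^n a_{jk}(t,y)\,\frac{\partial u}{\partial y_k}\frac{\partial v}{\partial y_j}\,dy + \beta \int_{\mathcal{O}} u\,v\,dy, \qquad u,v \in H^1(\mathcal{O}).
$$
I would first check that $\mathfrak{a}_t$ is symmetric, continuous and coercive on $H^1(\mathcal{O})$. Symmetry is immediate from $a_{jk}=a_{kj}$, which follows from $M(t,y)=T^{*}(t,y)T(t,y)$ in the proof of Lemma \ref{lema_elipfort}. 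Continuity is a consequence of $a_{jk}(t,\cdot)\in C^{1}(\overline{\mathcal{O}})$ together with the compactness of $[\tau,T]\times\overline{\mathcal{O}}$. Coercivity follows from the uniform strong ellipticity proved in Lemma \ref{lema_elipfort} and from $\beta>0$: there is a constant $\mathrm{C}>0$ with
$$
\mathfrak{a}_t(u,u)\geq \mathrm{C}\|\nabla u\|_{L^2(\mathcal{O})}^{2}+\beta\|u\|_{L^2(\mathcal{O})}^{2}\geq \min\{\mathrm{C},\beta\}\|u\|_{H^1(\mathcal{O})}^{2}.
$$

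By the first representation theorem for closed symmetric forms, $\mathfrak{a}_t$ generates a unique self-adjoint operator $\widetilde{A}(t)$ on $L^{2}(\mathcal{O})$ with domain
$$
D(\widetilde{A}(t))=\{u\in H^1(\mathcal{O}):\ \exists\, f\in L^{2}(\mathcal{O}),\ \mathfrak{a}_t(u,v)=\langle f,v\rangle_{L^{2}},\ \forall\,v\in H^1(\mathcal{O})\},
$$
and $\widetilde{A}(t)u=f$. It remains to verify $\widetilde{A}(t)=A(t)$. Testing the identity $\mathfrak{a}_t(u,\cdot)=\langle f,\cdot\rangle_{L^{2}}$ against $v\in C_{c}^{\infty}(\mathcal{O})$ identifies $-\sum_{j,k}\partial_{j}(a_{jk}\partial_{k}u)+\beta u=f$ in the sense of distributions. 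Since the coefficients $a_{jk}(t,\cdot)$ belong to $C^{1}(\overline{\mathcal{O}})$, the operator is strongly elliptic (Lemma \ref{lema_elipfort}), and $\partial\mathcal{O}$ is of class $C^{2}$, standard elliptic regularity up to the boundary yields $u\in H^{2}(\mathcal{O})$.

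Once $u\in H^{2}(\mathcal{O})$, I would apply Green's first identity to rewrite
$$
\mathfrak{a}_t(u,v)=\int_{\mathcal{O}}A(t)u\,v\,dy+\int_{\partial\mathcal{O}}v\,\sum_{k=1}^{n} n_{k}(y)\sum_{j=1}^{n} a_{jk}(t,y)\frac{\partial u}{\partial y_{j}}\,dS
$$
for every $v\in H^{1}(\mathcal{O})$. Comparing with $\mathfrak{a}_t(u,v)=\langle A(t)u,v\rangle_{L^{2}}$ and varying the trace of $v$ forces the conormal trace to vanish on $\partial\mathcal{O}$. Using \eqref{ajk_variavel_separada}, namely $a_{jk}(t,y)=h^{2}(t)\tilde{p}_{jk}(y)$ with $h(t)>0$ from \textbf{(H1)}, this conormal condition is exactly the boundary condition defining $D$ in \eqref{Domain_D_Dinfty_fixo}. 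Hence $D(\widetilde{A}(t))=D=D(A(t))$, the two operators coincide, and $A(t)$ is self-adjoint. The main subtlety I anticipate is the application of $H^{2}$-regularity up to the boundary for a divergence-form operator endowed with a conormal (oblique) boundary condition, rather than the plain Neumann problem for the Laplacian; this is classical but must be invoked in the appropriate form for variable-coefficient second-order elliptic operators.
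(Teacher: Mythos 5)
Your argument is correct, but it takes a genuinely different route from the paper. The paper's proof stays entirely on the operator level: it takes $u,v\in D$, integrates by parts twice (Green's Theorem), uses $\Gamma(t,u)=\Gamma(t,v)=0$ to discard the boundary terms and $a_{jk}=a_{kj}$ to conclude that $A(t)$ is symmetric, and then upgrades symmetry to self-adjointness by invoking $0\in\rho(A(t))$ (a symmetric operator with a real point in its resolvent set is self-adjoint). You instead construct the self-adjoint operator from the closed symmetric coercive form via the first representation theorem and then identify it with $A(t)$ by showing the form domain characterization coincides with $D$; this forces you to invoke $H^2$ regularity up to the boundary for the variational conormal problem, which you correctly flag as the delicate step (it holds here since $a_{jk}(t,\cdot)\in C^1(\overline{\mathcal{O}})$, the operator is uniformly strongly elliptic by Lemma \ref{lema_elipfort}, and $\partial\mathcal{O}$ is $C^2$). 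What each approach buys: the paper's computation is shorter and more elementary, but it silently relies on $0\in\rho(A(t))$, which is really established only through the form/Lax--Milgram machinery of Lemma \ref{Lem_Set_Ext_}; your version makes that dependence explicit and simultaneously delivers surjectivity and the domain identification, at the cost of importing boundary elliptic regularity. Note also that your form construction essentially duplicates the bilinear form $B_t$ the paper introduces in Lemma \ref{Lem_Set_Ext_} for sectoriality, so in the paper's architecture the two proofs share more than is apparent from the self-adjointness proof alone.
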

\begin{proof}
Given
 $v,u \in D( A( t))=D$  and $\displaystyle t\in [\tau,T]$, we have
\begin{eqnarray}
\nonumber \displaystyle \langle A(t)v,u\rangle_{L^{2}( \mathcal{O})} & = & \int_{\mathcal{O}} A(t) v(y) u(y) \textrm{d}y\\
\nonumber \displaystyle  & = & -\sum _{k,j=1}^{n}\int_{\mathcal{O}}\left[ \frac{\partial}{\partial y_{j}}\left( a_{jk}(t,y) \frac{\partial v(y)}{\partial y_{k}} \right) u(y) +\beta v(y)  u(y) \right]\textrm{d}y.
\end{eqnarray}

Applying Green's Theorem, we obtain
\begin{eqnarray}
    \nonumber 
\displaystyle &  - &\sum _{k,j=1}^{n}\int _{\mathcal{O}} \frac{\partial}{\partial y_{j}}\left( a_{jk}( t,y) \frac{\partial v(y)}{\partial y_{k}}  \right)u(y) \textrm{d}y  \\
 \displaystyle  &=&  \nonumber \sum _{k,j=1}^{n}\int_{\mathcal{O}}  a_{jk}( t,y) \frac{\partial v(y)}{\partial y_{j}}  \frac{\partial u(y)}{\partial y_{k}}   \textrm{d}y  - \int_{\partial \mathcal{O}}{\frac{1}{K(t,y)}\Gamma(t,v)u(y)\textrm{d}S_{y}}.
\end{eqnarray}

Since  $K(t,y)>0$ and $\displaystyle v \in D$, then $\displaystyle \Gamma ( t,v) =0.$ 

\begin{eqnarray}
\nonumber \displaystyle -\sum_{k,j=1}^{n}\int_{\mathcal{O}} \frac{\partial}{\partial y_{j}} \left( a_{jk}( t,y) \frac{\partial v(y)}{\partial y_{k}} \right)\textrm{d}y    =  \sum _{k,j=1}^{n}\int_{\mathcal{O}}  a_{jk}( t,y) \frac{\partial v(y)}{\partial y_{j}}  \frac{\partial u(y)}{\partial y_{k}}   \textrm{d}y. 
\end{eqnarray}

Applying Green's Theorem again, we obtain
\begin{eqnarray}
\displaystyle   \nonumber \displaystyle & & \sum _{k,j=1}^{n}\int_{\mathcal{O}}  a_{jk}( t,y) \frac{\partial v(y)}{\partial y_{j}}  \frac{\partial u(y)}{\partial y_{k}}   \textrm{d}y \\
\nonumber \displaystyle = &-&\sum _{k,j=1}^{n}\int _{\mathcal{O}} \frac{\partial}{\partial y_{k}}\left( a_{jk}(t,y) \frac{\partial u(y)}{\partial y_{j}}  \right)v(y) \textrm{d}y  +\int_{\partial \mathcal{O}}{\frac{1}{K(t,y)}\Gamma(t,u)v(y)\textrm{d}S_{y}}.
\end{eqnarray}

Since $\displaystyle u \in D$ then  $\Gamma (t,u) =0$. Moreover, $a_{jk}=a_{kj}$, so
\begin{equation}
\nonumber
\displaystyle \sum _{k,j=1}^{n}\int_{\mathcal{O}}  a_{jk}( t,y) \frac{\partial v(y)}{\partial y_{j}}  \frac{\partial u(y)}{\partial y_{k}}   \textrm{d}y 
 = -\sum _{k,j=1}^{n}\int _{\mathcal{O}} \frac{\partial}{\partial y_{k}}\left( a_{kj}(t,y) \frac{\partial u(y)}{\partial y_{j}}  \right)v(y) \textrm{d}y.
\end{equation}

Therefore, 
$\langle A( t) v,u\rangle _{L^{2}( \mathcal{O})}=\langle v,A( t) u\rangle _{L^{2}( \mathcal{O})}$ and consequently $A(t)$ is symmetric. Since $0 \in \rho(A(t))$ for all $t\in [\tau,T]$, then $A(t)$ is self-adjoint.

\end{proof}

\begin{lemma}
\label{Lem_Set_Ext_}
Suppose that the hypothesis 
\textbf{(H1)} holds. Then, the 
operator
$A(t)$ can be extended into a sectorial operator $\tilde{A}(t):H^{1}(\mathcal{O})\subset (H^{1}(\mathcal{O}))'\rightarrow (H^{1}(\mathcal{O}))'$ for all $t\in[\tau,T].$ 
\end{lemma}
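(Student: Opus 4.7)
The plan is to construct $\tilde{A}(t)$ via the sesquilinear form method applied to the Gelfand triple $H^{1}(\mathcal{O}) \hookrightarrow L^{2}(\mathcal{O}) \hookrightarrow (H^{1}(\mathcal{O}))'$. For each $t \in [\tau, T]$ I would introduce the form
$$
a_{t}(v, u) = \sum_{k,j=1}^{n} \int_{\mathcal{O}} a_{jk}(t,y)\, \frac{\partial v}{\partial y_{j}}\,\frac{\partial u}{\partial y_{k}}\, dy \,+\, \beta \int_{\mathcal{O}} v(y)\, u(y)\, dy, \qquad v, u \in H^{1}(\mathcal{O}).
$$
The Green's-formula computation already carried out in the proof that $A(t)$ is self-adjoint shows that $\langle A(t) v, u\rangle_{L^{2}(\mathcal{O})} = a_{t}(v, u)$ for $v \in D$ and $u \in H^{1}(\mathcal{O})$, so $a_{t}$ is exactly the form attached to $A(t)$, with the Neumann boundary condition absorbed into the choice of form domain $H^{1}(\mathcal{O})$.

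Next I would verify the two standard hypotheses for the form method. For boundedness, since $r$ is a $C^{2}$-diffeomorphism the coefficients $a_{jk}$ belong to $C^{1}([\tau,T]\times\overline{\mathcal{O}})$ and are therefore uniformly bounded on the compact set $[\tau,T]\times\overline{\mathcal{O}}$, which gives $|a_{t}(v,u)| \leq M\|v\|_{H^{1}(\mathcal{O})}\|u\|_{H^{1}(\mathcal{O})}$ with $M$ independent of $t$. For coercivity, Lemma \ref{lema_elipfort} supplies a constant $\mathrm{C}>0$ such that $\sum_{j,k} a_{jk}(t,y) z_{j} z_{k} \geq \mathrm{C}\|z\|_{\mathbb{R}^{n}}^{2}$ uniformly in $(t,y) \in [\tau,T]\times\overline{\mathcal{O}}$, and combined with $\beta>0$ this yields $a_{t}(v,v) \geq \min\{\mathrm{C},\beta\}\|v\|_{H^{1}(\mathcal{O})}^{2}$, again uniformly in $t$. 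By the Lax--Milgram theorem, the identity $\langle \tilde{A}(t) v, u\rangle_{(H^{1})', H^{1}} = a_{t}(v, u)$ then defines a bounded, boundedly invertible operator $\tilde{A}(t) \colon H^{1}(\mathcal{O}) \to (H^{1}(\mathcal{O}))'$, and the Green's-formula identity above shows that $\tilde{A}(t)|_{D} = A(t)$, so that $\tilde{A}(t)$ is the desired extension.

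Finally, sectoriality of $\tilde{A}(t)$ on $(H^{1}(\mathcal{O}))'$ follows from the standard form calculus on a Gelfand triple as developed in \cite{Daners}: a bounded, symmetric, $V$-coercive form produces an $m$-accretive operator in $V'$ whose numerical range is contained in the ray $[\min\{\mathrm{C},\beta\},\infty)$, and every such operator is sectorial of angle strictly less than $\pi/2$. The step I expect to require most care is making sure the resolvent estimate holds in the \emph{dual} norm rather than merely in the $L^{2}$ norm: this amounts to applying Lax--Milgram to $\lambda I + \tilde{A}(t)$ for $\lambda$ in a suitable sector around the positive real axis and reading off a bound of the form $\|(\lambda I + \tilde{A}(t))^{-1}\|_{\mathcal{L}((H^{1})')} \leq C/|\lambda|$ from the uniform coercivity constant. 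Because that constant is independent of $t \in [\tau,T]$, one in fact obtains \emph{uniform} sectoriality of the family $\{\tilde{A}(t)\}_{t \in [\tau,T]}$, which is precisely what the subsequent application of the non-autonomous theory of \cite{NolascoNaoAutonomo2,NolascoExistencia,Daners} requires.
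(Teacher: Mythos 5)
Your proposal is correct and follows essentially the same route as the paper: the paper also introduces the bilinear form $B_{t}(u,v)=\sum_{k,j}\int_{\mathcal{O}}[a_{jk}(t,y)\partial_{y_j}u\,\partial_{y_k}v+\beta uv]\,\mathrm{d}y$, proves its continuity and coercivity uniformly in $t$ using Lemma \ref{lema_elipfort} and the boundedness of $a_{jk}$ on $[\tau,T]\times\overline{\mathcal{O}}$, invokes the form-to-operator correspondence (citing \cite[Theorem 2.1]{Yagi} rather than \cite{Daners}) to get sectoriality in $(H^{1}(\mathcal{O}))'$ with constant $\frac{M+\delta}{\delta}$, and identifies the extension with $A(t)$ on $D$ via Green's theorem. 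The only cosmetic difference is your choice of reference for the form method and the extra detail on the dual-norm resolvent estimate.
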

\begin{proof}
Let us fix $t \in [\tau,T]$ and consider the following bilinear form
$\displaystyle B_{t}:H^{1}(\mathcal{O}) \times H^{1}(\mathcal{O}) \rightarrow \mathbb{R}$ such that
$$B_{t}(u,v)=\sum_{k,j=1}^{n}{\int_{\mathcal{O}} \left[ a_{jk}(t,y)\frac{\partial u(y)}{\partial y_{j}}\frac{\partial v(y)}{\partial y_{k}}+\beta u(y)v(y) \right]\textrm{d}y}, \quad \mbox{for all } u,v \in H^{1}(\mathcal{O}),$$  where $a_{jk}$ is given by (\ref{ajk}). We will show that $B_{t}$ is a continuous and coercive bilinear form.

By Lemma \ref{lema_elipfort}, we have
\begin{eqnarray*}
\nonumber\displaystyle B_{t}(u,u)  \displaystyle & = & \sum_{k,j=1}^{n}{\int_{\mathcal{O}} \left[ a_{jk}(t,y)\frac{\partial u(y)}{\partial y_{j}} \frac{\partial u(y)}{\partial y_{k}}+\beta u^{2}(y)\right]\textrm{d}y} \\
\nonumber &\geq & {\int_{\mathcal{O}} \left[ C\left\|\nabla u(y)\right\|_{\mathbb{R}^n}^2+\beta u^{2}(y)\right] \textrm{d}y}   
 \geq \delta\left\| u\right\|^{2}_{H^{1}(\mathcal{O})},
\end{eqnarray*}
where $\displaystyle \delta=\min\left\{C,\beta\right\}$ is independent of $t$.

Since $r\in C^{1}(\mathbb{R}\times\overline{\mathcal{O}},\mathbb{R}^{n})$ and 
$r(t,\cdot):\overline{\mathcal{O}} \rightarrow \overline{\mathcal{O}}_{t}$ is a diffeomorphism of  class $C^{2}$ for all $t \in \mathbb{R}$, then $a_{jk} \in  C^{1}([\tau,T]\times\overline{\mathcal{O}})$, $j,k=1,...,n.$ In particular, there exists $c_{jk}>0$ independent of $t$ such that $\left\|a_{jk}(t,\cdot)\right\|_{L^{\infty}(\overline{\mathcal{O}})}\leq c_{jk}$ for all $t\in [\tau,T]$. Thus,
\begin{eqnarray*}
\displaystyle |B_{t}(u,v)| 
     & \leq &   \sum_{k,j=1}^{n}{\int_{\mathcal{O}} \left| a_{jk}(t,y)\frac{\partial u(y)}{\partial y_{j}} \frac{\partial v(y)}{\partial y_{k}}+\beta u(y)v(y)\right|\textrm{d}y}\\
     & \leq &  \sum_{k,j=1}^{n} \left\|a_{jk}(t,\cdot)\right\|_{L^{\infty}(\overline{\mathcal{O}})}{\int_{\mathcal{O}} \left| \frac{\partial u(y)}{\partial y_{j}} \frac{\partial v(y)}{\partial y_{k}}\right|\textrm{d}y}+ \beta\int_{\mathcal{O}} \left| u(y) v(y)\right|\textrm{d}y  \\
     & \leq &  \sum_{k,j=1}^{n} c_{jk}\left\|\frac{\partial u}{\partial y_{j}}\right\|_{L^{2}(\mathcal{O})}\left\|\frac{\partial v}{\partial y_{k}}\right\|_{L^{2}(\mathcal{O})}+\beta\left\|u\right\|_{L^{2}(\mathcal{O})}\left\|v\right\|_{L^{2}(\mathcal{O})} \\
     &\leq& M\left\|u\right\|_{H^{1}(\mathcal{O})}\left\|v\right\|_{H^{1}(\mathcal{O})},
\end{eqnarray*}
where $\displaystyle M=\max_{k,j=1,..,n}\left\{c_{jk},\beta \right\}>0$ is independent of $t$.

Therefore, $B_{t}$ is a continuous and coercive bilinear form defined in $H^{1}(\mathcal{O})$, and by \cite[Theorem 2.1]{Yagi} the associated unbounded linear operators (their parts) are sectoral operators in
 $H^{1}(\mathcal{O})$, $L^{2}(\mathcal{O})$  and $(H^{1}(\mathcal{O}))'$ with constant $\displaystyle \frac{M+\delta}{\delta}$. 

Now, given $u,v \in D$ and applying
Green's Theorem, we obtain
\begin{eqnarray}
\nonumber B_{t}(u,v)=-\sum_{k,j=1}^{n}\int_{\mathcal{O}} \left[\frac{\partial}{\partial y_{k}}\left(a_{jk}(t,y)\frac{\partial u(y)}{\partial y_{j}}\right)+\beta u(y) \right] v(y) \textrm{d}y+{\int_{\partial \mathcal{O}} \frac{1}{K(t,y)}\Gamma(t,u)v(y)\textrm{d}S_{y}}.
\end{eqnarray}
Since $\Gamma(t,u)=0$, we have
$$B_{t}(u,v)=-\sum_{k,j=1}^{n}{\int_{\mathcal{O}} \left[\frac{\partial}{\partial y_{k}}\left(a_{jk}(t,y)\frac{\partial u(y)}{\partial y_{j}}\right)+\beta u(y)\right]v(y)\textrm{d}y}, \quad \mbox{for all } u,v\in D.$$

Finally, if there exists $A(t)$ such that $\langle A(t)u,v\rangle =B_{t}(u,v),$ then it is clear that
$$A(t)u=-\sum_{k,j=1}^{n}\frac{\partial}{\partial y_{k}}\left(a_{jk}(t,y)\frac{\partial u(y)}{\partial y_{j}}\right)+\beta u(y), \quad \mbox{for all } u \in D.$$

\end{proof}

Next, we obtain that the operator is uniformly sectorial and uniformly Hölder continuous.

\begin{theorem}
\label{operadorverificado}
Suppose that the hypothesis 
\textbf{(H1)} holds. Then, there exists $k>0$ such that 
the operator
$\tilde{A}(t)+kI:H^{1}(\mathcal{O})\subset (H^{1}(\mathcal{O}))'\rightarrow (H^{1}(\mathcal{O}))'$ is uniformly sectorial and uniformly Hölder continuous in $\mathcal{L}(H^{1}(\mathcal{O}),(H^{1}(\mathcal{O}))')$ for all $t\in[\tau,T]$.
\end{theorem}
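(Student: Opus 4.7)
The plan is to leverage everything already set up in the previous lemma together with the factorization of the principal-part coefficients supplied by (H1). Concretely, by (\ref{ajk_variavel_separada}) we have $a_{jk}(t,y)=h^2(t)\tilde{p}_{jk}(y)$ for $t\in[\tau,T]$, which is exactly the feature that will convert the $C^\theta$ regularity of the scalar $h$ into $C^\theta$ regularity of the whole operator family.

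First I would establish uniform sectoriality. In the previous lemma the constants $\delta=\min\{C,\beta\}$ (coercivity) and $M=\max_{j,k}\{c_{jk},\beta\}$ (continuity) of the bilinear form $B_t$ on $H^{1}(\mathcal{O})\times H^{1}(\mathcal{O})$ are independent of $t\in[\tau,T]$: the coercivity comes from Lemma \ref{lema_elipfort} with a uniform ellipticity constant $C=C(r,\tau,T)$, and the continuity from the uniform $L^\infty$ bound $\|a_{jk}(t,\cdot)\|_{L^\infty}\le c_{jk}$ on the compact $[\tau,T]\times\overline{\mathcal{O}}$. Applying the form-to-operator construction of \cite[Theorem 2.1]{Yagi} (or the analogous Kato/Lions/Tanabe result) for each $t$, the resolvent estimate $\|(\lambda I-\tilde A(t))^{-1}\|_{\mathcal{L}((H^1)')}\le \tfrac{K}{|\lambda|}$ outside a common sector holds with $K$ and sector opening depending only on $\delta,M$. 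Choosing $k>0$ large enough (any $k>0$ works since $B_t$ is already coercive, but taking $k$ slightly positive guarantees the sector's vertex is pushed into the open right half-plane) makes $\tilde A(t)+kI$ uniformly sectorial in $t\in[\tau,T]$.

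Next I would prove uniform Hölder continuity in $\mathcal{L}(H^{1}(\mathcal{O}),(H^{1}(\mathcal{O}))')$. Fix $t,s\in[\tau,T]$ and $u,v\in H^{1}(\mathcal{O})$. Because the $\beta u v$ term and the shift $kI$ are time-independent, their contributions cancel in $B_t-B_s$ and in $\tilde A(t)-\tilde A(s)$, and we are reduced to
\[
B_t(u,v)-B_s(u,v)=(h^2(t)-h^2(s))\sum_{k,j=1}^{n}\int_{\mathcal{O}}\tilde{p}_{jk}(y)\frac{\partial u}{\partial y_j}\frac{\partial v}{\partial y_k}\,dy.
\]
By (H1), $h$ is $\theta$-Hölder on $\mathbb{R}$ with $0<h_0\le h\le h_1$, so
\[
|h^2(t)-h^2(s)|=|h(t)+h(s)|\,|h(t)-h(s)|\le 2h_1\, [h]_{C^\theta}|t-s|^\theta,
\]
and $\tilde{p}_{jk}\in C^1(\overline{\mathcal{O}})$ is uniformly bounded. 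Combining these gives $|B_t(u,v)-B_s(u,v)|\le C|t-s|^\theta\|u\|_{H^1}\|v\|_{H^1}$ with $C=C(r,\tau,T,\theta)$ independent of $t,s$. Dualizing in $v$, this is exactly
\[
\|(\tilde A(t)+kI)-(\tilde A(s)+kI)\|_{\mathcal{L}(H^1(\mathcal{O}),(H^1(\mathcal{O}))')}\le C|t-s|^\theta,
\]
which is uniform Hölder continuity with exponent $\theta$.

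The only real subtlety I expect is bookkeeping the sectoriality constants: one has to cite (or verify) a quantitative version of the form-to-sectorial-operator theorem whose constants depend only on $(\delta,M)$ so that uniformity in $t$ is automatic, and then confirm that the chosen $k$ works simultaneously for the whole family — the coercivity estimate $B_t(u,u)\ge\delta\|u\|_{H^1}^2$ already pins the numerical range of $\tilde A(t)$ into $\{\mathrm{Re}\,\lambda\ge\delta\}$ uniformly in $t$, which makes the uniform choice of $k$ essentially automatic. Once this is in place, the Hölder step is short and purely algebraic thanks to the separation of variables $a_{jk}(t,y)=h^2(t)\tilde{p}_{jk}(y)$ provided by (H1).
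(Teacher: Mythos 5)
Your proposal is correct and follows essentially the same route as the paper: uniform sectoriality is obtained from the $t$-independent coercivity and continuity constants $\delta,M$ of the form $B_t$ via \cite[Theorem 2.1]{Yagi} (established in Lemma \ref{Lem_Set_Ext_}), and uniform H\"older continuity reduces, through the factorization $a_{jk}(t,y)=h^{2}(t)\tilde{p}_{jk}(y)$ from \textbf{(H1)}, to the bound $|h^{2}(t)-h^{2}(s)|\leq c|t-s|^{\theta}$. The only cosmetic difference is that you estimate $B_t-B_s$ directly on $H^{1}(\mathcal{O})\times H^{1}(\mathcal{O})$, whereas the paper first bounds $A(t)-A(s)$ on the domain $D$ and then invokes density of $D$ in $H^{1}(\mathcal{O})$.
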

\begin{proof}
Let $u\in H^{1}(\mathcal{O})$ and $\lambda \in \Sigma_{0,\phi}$. Using  Lemma \ref{Lem_Set_Ext_}, there exist constant $M$ and $\delta$ independent of $t\in [\tau, T]$ such that
\begin{eqnarray}
\nonumber \left\|u\right\|_{H^{1}(\mathcal{O})} = \left\| [\lambda I-(-\tilde{A}(t))]^{-1}[\lambda I-(-\tilde{A}(t))]u \right\|_{H^{1}(\mathcal{O})} 
\nonumber \leq \frac{M+\delta}{\delta|\lambda|} \left\|(\lambda I +\tilde{A}(t))u \right\|_{(H^{1}(\mathcal{O}))'}.
\end{eqnarray}

Choosing
 $k>1$ we get $k+\lambda$ is an element of
 $\Sigma_{0,\phi}$, so 
\begin{eqnarray}
\nonumber \left\|u\right\|_{H^{1}(\mathcal{O})}&\leq & \frac{M+\delta}{\delta|\lambda+k|} \left\|[(\lambda +k)I -(-\tilde{A}(t))]u \right\|_{(H^{1}(\mathcal{O}))'}
\nonumber \\
\nonumber &\leq& \frac{M+\delta}{\delta(|\lambda|+1)} \left\|[(\lambda +k)I -(-\tilde{A}(t))]u \right\|_{(H^{1}(\mathcal{O}))'}.
\end{eqnarray} 

Since $\lambda+k\in \rho(-A(t))$ then $\lambda \in\rho(-(A(t)+k))$ and there exists $[\lambda I -(-\tilde{A}(t)-kI)]^{-1}$, for all $\lambda \in \Sigma_{0,\phi}$,
such that
$$\left\|[\lambda I -(-\tilde{A}(t)-kI)]^{-1}\right\|_{\mathcal{L}((H^1(\mathcal{O}))',H^1(\mathcal{O}))}\leq \frac{M+\delta}{\delta(|\lambda|+1)}, $$
that is, the operator $\tilde{A}(t)+kI$ is uniformly sectorial for all
 $t\in [\tau,T]$.

Finally, we will show that the operator is uniformly H\"older continuous. Note that as a consequence of the hypothesis
 \textbf{(H1)}, there exists a constant $c>0$ such that
$$
|h^2(s)-h^2(t)|=|(h(s)-h(t))(h(s)+h(t))|\leq c |s-t|^{\theta}, \quad \mbox{for all } t,s\in [\tau,T],
$$
for some $\theta \in (0,1]$. Thus, for each $j,k=1,..,n$, there exists  $c_{jk}>0$ independent of
 $t$ such that
\begin{equation}
\label{holderajk}
\left\| a_{jk}(s,\cdot)-a_{jk}(t,\cdot) \right\|_{L^{\infty}(\overline{\mathcal{O}})}\leq c_{jk}|s-t|^{\theta}, \quad \mbox{for all } t,s\in [\tau,T],
\end{equation}
where $a_{jk}$ is given by \eqref{ajk_variavel_separada}.

Let $v \in D$, $\phi \in H^{1}(\mathcal{O})$ and $t,s\in [\tau,T]$,
using (\ref{holderajk}) we have
\begin{eqnarray}
\nonumber \displaystyle \displaystyle \left|\int_{\mathcal{O}}{[A(t)v(y)-A(s)v(y)]\phi\textrm{d}y}\right| & \leq  & \int_{\mathcal{O}}{\sum_{k,j=1}^{n}{\left| [a_{jk}(s,y)-a_{jk}(t,y)]\frac{\partial v}{\partial y_{k}}\frac{\partial \phi}{\partial y_{j}}\right|}\textrm{d}y} \\
\nonumber \displaystyle  &\leq & \sum_{k,j=1}^{n} {\left\| a_{jk}(s,\cdot)-a_{jk}(t,\cdot)\right\|_{L^{\infty}(\overline{\mathcal{O}})} \left\| \frac{\partial v}{\partial y_{k}}\right\|_{L^{2}(\mathcal{O})} \left\|\frac{\partial \phi}{\partial y_{j}}\right\|_{L^{2}(\mathcal{O})}}  \\
\nonumber \displaystyle  &\leq & 
c|t-s|^{\theta} \left\| v\right\|_{H^{1}(\mathcal{O})} \left\|\phi\right\|_{H^{1}(\mathcal{O})},
\end{eqnarray}
where $c=n \displaystyle \max_{k,j=1,..,n}\{c_{jk}\}>0.$

Taking $u \in H^1(\mathcal{O})$ and using the density of $D$ in $H^1(\mathcal{O})$, we obtain
\begin{eqnarray}
\left\|[\tilde{A}(t)-\tilde{A}(s)]u\right\|_{(H^1(\mathcal{O}))'}   \leq  c |t-s|^{\theta}\left\|u\right\|_{H^1(\mathcal{O})}.
\end{eqnarray}
Thus, taking the supreme with $\left\|u\right\|_{H^1(\mathcal{O})}=1$, we conclude that
$$\left\|[\tilde{A}(t)-\tilde{A}(s)]\right\|_{\mathcal{L}(H^1(\mathcal{O}),(H^1(\mathcal{O}))')} \leq  c |t-s|^{\theta}.$$
\end{proof}

\begin{remark}
It is also possible to prove that
 the operator
$\tilde{A}(t)+kI$ is uniformly sectorial and uniformly Hölder continuous in $\mathcal{L}(L^{2}(\mathcal{O}))$ for all $t\in[\tau,T]$.
 
\end{remark}

\begin{theorem}
\label{A_composta_inversaLimitado}
Suppose that the hypothesis 
\textbf{(H1)} holds and $\beta>0$ is large sufficiently. Then, the operators $A(t)A^{-1}(s)$ and $\overline{A(t)A^{-1}(s)}$ are uniformly bounded in
$\mathcal{L}(L^{2}(\mathcal{O}))$ and $\mathcal{L}((H^{1}(\mathcal{O}))')$ for all $t,s\in[\tau,T]$, respectively.
\end{theorem}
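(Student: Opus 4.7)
The plan is to exploit the algebraic identity
\[
A(t)A^{-1}(s)=I+[A(t)-A(s)]A^{-1}(s),
\]
together with the analogous identity for the $(H^{1}(\mathcal{O}))'$-extensions, so that the question reduces to a uniform bound on the remainder $[A(t)-A(s)]A^{-1}(s)$ as $t,s$ vary in $[\tau,T]$. The hypothesis that $\beta$ is taken sufficiently large guarantees that $0\in\rho(A(s))$ and $0\in\rho(\tilde A(s))$ with resolvent bounds that do not depend on $s$; this is already implicit in Lemma \ref{Lem_Set_Ext_} and Theorem \ref{operadorverificado}, whose coercivity and sectoriality constants depend on $\beta$ but not on the parameter $t$.

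For the $L^{2}$ bound I would first establish uniform $H^{2}$-regularity for $A(s)$. By \eqref{ajk_variavel_separada} one has $a_{jk}(t,y)=h^{2}(t)\tilde p_{jk}(y)$ with $0<h_{0}\le h(t)\le h_{1}$, so the coefficients of $A(s)$ are uniformly bounded in $C^{1}(\overline{\mathcal{O}})$, the boundary space $D$ is time-independent by Lemma \ref{Dominio_A(t)_fixo}, and the principal part is uniformly elliptic by Lemma \ref{lema_elipfort}. Classical $H^{2}$-regularity for conormal boundary value problems then yields a constant $C$ independent of $s$ with $\|v\|_{H^{2}(\mathcal{O})}\le C(\|A(s)v\|_{L^{2}(\mathcal{O})}+\|v\|_{L^{2}(\mathcal{O})})$ for every $v\in D$; combined with the obvious bound $\|v\|_{L^{2}}\le \beta^{-1}\|A(s)v\|_{L^{2}}$ coming from positivity of $A(s)$, this yields $\|A^{-1}(s)\|_{\mathcal{L}(L^{2}(\mathcal{O}),H^{2}(\mathcal{O}))}\le C'$ uniformly in $s$. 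On the other hand, writing
\[
[A(t)-A(s)]v=-\sum_{j,k=1}^{n}\frac{\partial}{\partial y_{j}}\Bigl([a_{jk}(t,y)-a_{jk}(s,y)]\tfrac{\partial v}{\partial y_{k}}\Bigr),
\]
the right-hand side is controlled in $L^{2}(\mathcal{O})$ by $\bigl(\max_{j,k}\|a_{jk}(t,\cdot)-a_{jk}(s,\cdot)\|_{C^{1}(\overline{\mathcal{O}})}\bigr)\|v\|_{H^{2}(\mathcal{O})}$, which is uniformly bounded on $[\tau,T]^{2}$ because the $a_{jk}$ are $C^{1}$ on the compact set $[\tau,T]\times\overline{\mathcal{O}}$. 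Composing these two estimates gives the uniform bound for $A(t)A^{-1}(s)$ in $\mathcal{L}(L^{2}(\mathcal{O}))$.

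For the $(H^{1}(\mathcal{O}))'$ bound the argument is cleaner since Theorem \ref{operadorverificado} already supplies the ingredients: uniform sectoriality of $\tilde A(t)+kI$ gives $\|\tilde A^{-1}(s)\|_{\mathcal{L}((H^{1}(\mathcal{O}))',H^{1}(\mathcal{O}))}\le C$ uniformly in $s$ (provided $\beta$ is large enough that $0\in\rho(\tilde A(s))$), and the Hölder estimate in the same theorem gives $\|\tilde A(t)-\tilde A(s)\|_{\mathcal{L}(H^{1}(\mathcal{O}),(H^{1}(\mathcal{O}))')}\le c|t-s|^{\theta}\le c(T-\tau)^{\theta}$. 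Composing yields $\|[\tilde A(t)-\tilde A(s)]\tilde A^{-1}(s)\|_{\mathcal{L}((H^{1}(\mathcal{O}))')}\le Cc(T-\tau)^{\theta}$, and adding the identity delivers the uniform bound for $\overline{A(t)A^{-1}(s)}=\tilde A(t)\tilde A^{-1}(s)$.

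The main obstacle I foresee is the uniform elliptic regularity step needed in the $L^{2}$ case: the Hölder estimate from Theorem \ref{operadorverificado} is formulated only in the duality $\mathcal{L}(H^{1},(H^{1})')$ and is therefore too weak to be applied directly at the $L^{2}$-level. What rescues the argument is the very explicit separated-variables structure $a_{jk}(t,y)=h^{2}(t)\tilde p_{jk}(y)$ forced by \textbf{(H1)}, because the ellipticity constant and the $C^{1}$-bounds on the coefficients depend only on $h_{0},h_{1}$ and on the fixed functions $\tilde p_{jk}$; the constants in the $H^{2}$-regularity inequality can therefore be chosen uniformly in $t$. Once this is established, the remaining composition is purely algebraic.
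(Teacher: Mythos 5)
Your proposal is correct, but it takes a genuinely different route from the paper's. The paper's proof is much shorter: for the $(H^{1}(\mathcal{O}))'$ bound it simply composes the uniform estimate $\|\overline{A^{-1}(s)}\|_{\mathcal{L}((H^{1}(\mathcal{O}))',H^{1}(\mathcal{O}))}\le \frac{M+\delta}{\delta}$ (from Theorem \ref{operadorverificado}) with the uniform bound $\|\overline{A(t)}\|_{\mathcal{L}(H^{1}(\mathcal{O}),(H^{1}(\mathcal{O}))')}\le C$ coming from the continuity of the form $B_{t}$, with no perturbation identity $I+[A(t)-A(s)]A^{-1}(s)$ needed; your version of this step, via the H\"older estimate, is equivalent but slightly less direct since it only needs $|t-s|^{\theta}\le (T-\tau)^{\theta}$ rather than the decay in $|t-s|$. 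The real divergence is in the $L^{2}$ bound: the paper disposes of it in one line, asserting that since $\overline{A(t)}$ is the unique extension of the densely defined $A(t)$, the bound in $\mathcal{L}((H^{1}(\mathcal{O}))')$ passes ``by density'' to a bound in $\mathcal{L}(L^{2}(\mathcal{O}))$. As you correctly sense in your ``main obstacle'' paragraph, this is the delicate point --- the $L^{2}$ norm is stronger than the $(H^{1}(\mathcal{O}))'$ norm, so a density argument alone does not upgrade the operator norm --- and your route through uniform $H^{2}$ elliptic regularity for the conormal problem (uniform ellipticity from Lemma \ref{lema_elipfort}, uniform $C^{1}$ coefficient bounds from the separated structure $a_{jk}=h^{2}(t)\tilde p_{jk}$, time-independent domain $D$ from Lemma \ref{Dominio_A(t)_fixo}, plus the coercivity bound $\|v\|_{L^{2}}\le\beta^{-1}\|A(s)v\|_{L^{2}}$) is heavier machinery but actually substantiates the claim at the $L^{2}$ level in a way the paper's proof does not spell out. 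In short: your argument buys rigor on the $L^{2}$ estimate at the cost of invoking classical $H^{2}$ regularity theory, while the paper buys brevity at the cost of a density step that, as written, deserves the justification you provide.
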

\begin{proof}
By Theorem \ref{operadorverificado} there exist positive constants
 $M$ and $\delta$ independent of $s\in[\tau,T]$ such that
$$\left\|\overline{A^{-1}(s)}\right\|_{\mathcal{L}((H^{1}(\mathcal{O}))',H^{1}(\mathcal{O}))}\leq \frac{M+\delta}{\delta}.$$

Moreover, $\overline{A(t)}$ is also an isomorphism of
 $H^{1}(\mathcal{O})$ in $(H^{1}(\mathcal{O}))'$, then there exists $C>0$ independent of $t\in[\tau,T]$ such that
$$\left\|\overline{A(t)}\right\|_{\mathcal{L}(H^{1}(\mathcal{O}),(H^{1}(\mathcal{O}))')} \leq C.$$

Thus, there exists $L>0$ independent of $t,s \in [\tau,T]$ such that
$$\left\|\overline{A(t)A^{-1}(s)}\right\|_{\mathcal{L}((H^{1}(\mathcal{O}))')} \leq L.$$

Since $A(t)$ is a  densely defined operator and $\overline{A(t)}$ is its unique extension, by density, there is
$\tilde{L}>0$ independent of $t,s \in [\tau,T] $ such that
$$
\left\|A(t)A^{-1}(s)\right\|_{\mathcal{L}(L^{2}(\mathcal{O}))}\leq \tilde{L}.
$$

\end{proof}

\begin{remark}
We will assume $\beta>0$ is large sufficiently and, by abuse of notation, we will denote the extension of $A(t)$ and all its parts, that is, its realizations, by $A(t).$
\end{remark}

Since $A(t)$ is self-adjoint and positive, then it has bounded purely imaginary powers (see \cite{Yagi}), and by Theorem \ref{A_composta_inversaLimitado}, $A(t)A^{-1}(s)$ and $\overline{A(t)A^{-1}(s)}$ are uniformly bounded. Thus, from \cite[Corollary
 3.5, Proposition
 3.6 and Corollary
 3.7]{NolascoNaoAutonomo2},
 we can construct a scale of Banach spaces that are independent of $t$.

Consider the operator family
$A(t) : D(A(t)) \subset L^{2}(\mathcal{O}) \rightarrow L^{2}(\mathcal{O})$ given by \eqref{operadorabstrato} and let $E^{0}=L^{2}(\mathcal{O})$ and $E^{1}=D(A(t))=D$, where the domain $D$ is independent of $t$ and it is given by
\eqref{Domain_D_Dinfty_fixo}. There is a scale of fractional power spaces
 $E^{\alpha}$, $\alpha \in \mathbb{R}$, associated to $A(t)$, which satisfies
$$E^{\alpha} \hookrightarrow H^{2\alpha}(\mathcal{O}), \quad \mbox{for $\alpha \geq 0$},
$$
with $E^{\frac{1}{2}}=H^{1}(\mathcal{O})$.

Since the family of operators
$\{A(t): t\in [\tau,T]\}$ is already in the conditions of \cite[Lemmas 16.1, 16.2 and 16.7]{Daners}, then to obtain the local existence and uniqueness of solutions to the problem
 (\ref{equacaonodominiofixo}), we will need to study the nonlinearity and its respective abstract map.

Given $ \displaystyle \frac{1}{2} \leq \alpha < 1$ we define the following map
\begin{equation}
\label{nao_linearidade_abstrata_F}
\begin{array}{cccl}
  F:   & \mathbb{R} \times E^{\alpha} & \rightarrow &E^{0}\\
     &(t,v)& \mapsto & F(t,v)=f^{e}(t,v) - \langle \vec{b}(t,\cdot),  \nabla v \rangle
\end{array}
\end{equation}
where $f^e:\mathbb{R} \times E^{\alpha} \rightarrow E^{0}$ 
is the Nemitski\u{\i} operator associated to $f$, given by $f^e(t,v)(y):=f(t,v(y))$ for any $(t,v)\in \mathbb{R}\times E^{\alpha}$ and $y\in \mathcal{O}$,  $\vec{b}(t,y)=(b_1(t,y),...,b_n(t,y))\in \mathbb{R}^n$, with each coefficient
 $b_k(t,y)$, $k=1,...,n$,  given in \eqref{bk} and
\begin{equation}\label{bk_Abstract_Novo}
\langle \vec{b}(t,y),  \nabla v(y) \rangle = \sum_{k=1}^{n}{b_{k}(t,y)\frac{\partial v(y)}{\partial y_{k}}}, \quad \mbox{for all $(t,y)\in \mathbb{R}\times \mathcal{O}$}.
 \end{equation}

With the abstract operator given by \eqref{operadorabstrato} and the abstract nonlinearity defined in \eqref{nao_linearidade_abstrata_F}, for $\displaystyle \frac{1}{2} \leq \alpha <1$, we can write (\ref{equacaonodominiofixo}) as the following abstract parabolic equation
\begin{equation}
\label{equacaonoabstratanofixo}
\left\{
\begin{array}{lr}
\dot{v}(t)+A(t)v(t)=F(t,v(t)), & t> \tau \\ 
\displaystyle v(\tau)=v_{\tau} \in E^{\alpha}.
\end{array}
\right.
\end{equation}

Under the above conditions and from
\cite[Teorema 6.1]{Pazy}, there is a unique evolution process $U(t,\tau)$ for homogeneous problem
\begin{equation}
\label{equacaonoabstratanofixohomogeneo_S}
\left\{
\begin{array}{lr}
\dot{v}(t)+A(t)v(t)=0, & t>\tau \\ 
\displaystyle v(\tau)=v_{\tau} \in E^{\alpha},
\end{array}
\right.
\end{equation}
which is associated to the abstract parabolic equation
 \eqref{equacaonoabstratanofixo},  satisfying $\left\|U(t,\tau)\right\|_{\mathcal{L}(E^{\alpha})}\leq C,$ for all $\tau \leq t \leq T$, for some constant $C>0$. 

Now, to obtain the existence and uniqueness of solutions of the abstract parabolic problem \eqref{equacaonoabstratanofixo},we need the following result

\begin{lemma}
\label{naolinearidadef}
Suppose that the hypotheses \textbf{(H1)} and \textbf{(H2)} hold
and let $ \displaystyle \frac{1}{2} \leq\alpha  < \min \left\{ 1,\frac{n}{4}\right\}$ and  $I\subset \mathbb{R}$ be bounded interval. Then, there exists $c>0$ independent of $t$ such that   
$$
\left\|F(t,v)-F(t,w)\right\|_{E^{0}}  \leq  c \left\| v-w \right\|_{E^{\alpha}}(1+\left\| v\right\|^{\rho}_{E^{\alpha}}+\left\| w\right\|^{\rho}_{E^{\alpha}}),
$$
for all $t\in I$ and $v,w\in E^{\alpha}$. In particular, for any $r>0$, there exists a constant $L(r)>0$ such that
\begin{equation}
\label{Lipschitz_Local_F}
\left\|F(t,v)-F(t,w)\right\|_{E^{0}}  \leq  L(r) \left\| v-w \right\|_{E^{\alpha}},
\end{equation}
for all $t\in I$ and $v,w\in E^\alpha$ with $\left\|v\right\|_{E^\alpha}, \left\|w\right\|_{E^\alpha}<r,$ that is, the map $F:(t,\cdot): E^{\alpha}  \rightarrow E^{0}$ is locally
 Lipschitz, uniformly in $t\in I$.
\end{lemma}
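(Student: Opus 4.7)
The plan is to split $F(t,v)-F(t,w)$ into the Nemitski\u{\i} piece $f^e(t,v)-f^e(t,w)$ and the linear transport piece $\langle \vec b(t,\cdot),\nabla(w-v)\rangle$, and estimate each in $E^0=L^2(\mathcal{O})$ separately, using the continuous embedding $E^\alpha\hookrightarrow H^{2\alpha}(\mathcal{O})$ recorded just before the statement.

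For the nonlinear term, I would invoke the Mean Value consequence \eqref{cres02} pointwise, so that
\[
\|f^e(t,v)-f^e(t,w)\|_{L^2(\mathcal{O})}\le c\,\bigl\||v-w|(1+|v|^\rho+|w|^\rho)\bigr\|_{L^2(\mathcal{O})},
\]
uniformly in $t\in I$. Then apply H\"older's inequality with exponents $(1+\rho)$ and $(1+\rho)/\rho$ to obtain
\[
\|f^e(t,v)-f^e(t,w)\|_{L^2(\mathcal{O})}\le c\,\|v-w\|_{L^{2(1+\rho)}(\mathcal{O})}\bigl(|\mathcal{O}|^{\frac{\rho}{2(1+\rho)}}+\|v\|^{\rho}_{L^{2(1+\rho)}(\mathcal{O})}+\|w\|^{\rho}_{L^{2(1+\rho)}(\mathcal{O})}\bigr).
\]
The crucial step is then the Sobolev embedding $H^{2\alpha}(\mathcal{O})\hookrightarrow L^{2n/(n-4\alpha)}(\mathcal{O})$, valid because $\alpha<n/4$; the growth restriction $0<\rho\le 4\alpha/(n-4\alpha)$ in \textbf{(H2)} is exactly equivalent to $2(1+\rho)\le 2n/(n-4\alpha)$, so this embedding absorbs all the $L^{2(1+\rho)}$ norms. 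Composing with $E^\alpha\hookrightarrow H^{2\alpha}(\mathcal{O})$ yields the desired bound in terms of the $E^\alpha$ norms.

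For the gradient term, I first observe that, under \textbf{(H1)}, the coefficients $b_k(t,y)$ defined in \eqref{bk} belong to $C([\tau,T]\times\overline{\mathcal{O}})$: the summands involve $\partial_t r_k^{-1}\circ r$, $\Delta_x r_k^{-1}\circ r$, and $\partial_{y_j}a_{jk}=\partial_{y_j}(h^2(t)\tilde p_{jk})$, each continuous on the compact set $[\tau,T]\times\overline{\mathcal{O}}$. Hence there is $C=C(r,\tau,T)>0$ such that $\|b_k(t,\cdot)\|_{L^\infty(\overline{\mathcal{O}})}\le C$ uniformly in $t\in I$, and by Cauchy--Schwarz in $\mathbb{R}^n$ and $\alpha\ge\tfrac12$,
\[
\bigl\|\langle\vec b(t,\cdot),\nabla(v-w)\rangle\bigr\|_{L^2(\mathcal{O})}\le C\,\|\nabla(v-w)\|_{L^2(\mathcal{O})}\le C\,\|v-w\|_{H^1(\mathcal{O})}\le C\,\|v-w\|_{E^\alpha},
\]
where the last inequality uses $E^{1/2}=H^1(\mathcal{O})$ and the monotonicity of the fractional scale.

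Adding the two estimates gives the claimed inequality; specialising to $\|v\|_{E^\alpha},\|w\|_{E^\alpha}<r$ yields \eqref{Lipschitz_Local_F} with $L(r)=c(1+2r^\rho)$. The main obstacle is really just verifying that the single exponent $2(1+\rho)$ lies in the Sobolev range of $H^{2\alpha}$ precisely when \textbf{(H2)} holds; everything else is routine bookkeeping once the continuity and boundedness of the $b_k$ coefficients have been noted.
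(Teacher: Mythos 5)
Your proof is correct and follows essentially the same route as the paper: the same splitting of $F$ into the Nemitski\u{\i} part and the $\langle\vec b,\nabla\cdot\rangle$ part, the growth estimate \eqref{cres02} plus H\"older and Sobolev embeddings for the former, and uniform $L^\infty$ bounds on $b_k$ together with $E^{\alpha}\hookrightarrow E^{1/2}=H^{1}(\mathcal{O})$ for the latter. The only (immaterial) difference is your choice of H\"older exponents $\bigl(1+\rho,(1+\rho)/\rho\bigr)$, which puts all three norms in the single space $L^{2(1+\rho)}(\mathcal{O})$, whereas the paper uses $q=n/(n-4\alpha)$, $q'=n/(4\alpha)$; both choices reduce to exactly the condition $\rho\le 4\alpha/(n-4\alpha)$ of \textbf{(H2)}.
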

\begin{proof}
Given $t\in I$ and  $v,w\in E^\alpha$, using \eqref{cres02} and the Hölder and Minkowski inequalities, we obtain
\begin{eqnarray}
\nonumber \displaystyle  & &  \left\|f^e(t,v)-f^e(t,w)\right\|_{E^{0}}  \\
\nonumber \displaystyle & = &
\left\|f^e(t,v)-f^e(t,w)\right\|_{L^2(\mathcal{O})} \\
\nonumber \displaystyle &\leq &  c\left\| |v-w|(1+|v|^{\rho} + |w|^{\rho})\right\|_{L^{2}(\mathcal{O})}\\
\nonumber \displaystyle & \leq &  c\left\| v-w\right\|_{L^{2q}(\mathcal{O})}\left(\left\|1+|v|^{\rho} + |w|^{\rho}\right\|_{L^{2q'}(\mathcal{O})}\right) \\
\nonumber \displaystyle & \leq &  c\left\| v-w\right\|_{L^{2q}(\mathcal{O})}\left(|\mathcal{O}|^{\frac{1}{2q'}}+\left\||v|^{\rho}\right\|_{L^{2q'}(\mathcal{O})} + \left\||w|^{\rho}\right\|_{L^{2q'}(\mathcal{O})}\right) \\
\nonumber \displaystyle 
&\leq &  \tilde{c}\left\| v-w\right\|_{L^{2q}(\mathcal{O})}\left(1+\left\|v\right\|^{\rho}_{L^{2q'\rho}(\mathcal{O})} + \left\|w\right\|^{\rho}_{L^{2q'\rho}(\mathcal{O})}\right),
\end{eqnarray}
where $\displaystyle \frac{1}{q}+\frac{1}{q'}=1$ and $|\mathcal{O}|$ is the measure of
 $\mathcal{O}$. For example, taking $ \displaystyle q = \frac{n}{n-4\alpha}$ and $\displaystyle q'=\frac{n}{4\alpha}$ with $\displaystyle 0<\rho \leq \frac{4\alpha}{n-4\alpha}$ and $\displaystyle 0\leq\alpha  < \min \left\{ 1,\frac{n}{4}\right\}$, we have
\begin{equation}
\nonumber
E^{\alpha} \hookrightarrow H^{2\alpha}(\mathcal{O}) \hookrightarrow L^{2q}(\mathcal{O})    \quad \mbox{and} \quad E^{\alpha} \hookrightarrow H^{2\alpha}(\mathcal{O}) \hookrightarrow L^{2q'\rho}(\mathcal{O}). \end{equation}

Thus, there exists a constant $c_1>0$ independent of $t$ such that
\begin{equation}
\label{Lips_1_f}
\left\|f^e(t,v)-f^e(t,w)\right\|_{E^{0}}  \leq  c_1 \left\| v-w \right\|_{E^{\alpha}}(1+\left\| v\right\|^{\rho}_{E^{\alpha}}+\left\| w\right\|^{\rho}_{E^{\alpha}}),
\end{equation}
for all $t\in I$ and $v,w\in E^{\alpha}$.

Since $r\in C^{1}(\mathbb{R}\times\overline{\mathcal{O}},\mathbb{R}^n)$ and $r(t,\cdot):\overline{\mathcal{O}} \rightarrow \overline{\mathcal{O}}_{t}$ is a diffeomorphism of  class $C^2$ for all $t\in\mathbb{R}$, 
then $b_{k} \in  C(\mathbb{R}\times\overline{\mathcal{O}})$, $k=1,...,n$. In particular, there exists $c_k>0$ independent of $t$ such that
$$
\left\|b_{k}(t,\cdot)\right\|_{L^{\infty}(\overline{\mathcal{O}})}\leq c_k, \quad \mbox{for all $t\in I$}.
$$
On the other hand, given $t\in I$ and  $v,w\in E^\alpha$ and using (\ref{bk_Abstract_Novo}), we have
\begin{eqnarray}
\nonumber
&& \|\langle \vec{b}(t,\cdot),  \nabla (v-w) \rangle
\|_{E^0}\\
& = & \nonumber \|\langle \vec{b}(t,\cdot),  \nabla (v-w) \rangle
\|_{L^{2}(\mathcal{O})}\\
& = & \nonumber\left[\int_{\mathcal{O}}|\langle \vec{b}(t,y),  \nabla (v(y)-w(y)) \rangle|^2\textrm{d}y \right]^\frac{1}{2}\\
\nonumber &\leq & 
\left[\int_{\mathcal{O}} \sum_{k=1}^{n}  \left|b_{k}(t,y)\frac{\partial}{\partial y_{k}}(v(y)-w(y)) \right|^{2} \textrm{d}y \right]^\frac{1}{2}\\
\nonumber &\leq& \sum_{k=1}^{n} \left\|b_{k}(t,\cdot)\right\|_{L^{\infty}(\overline{\mathcal{O}})} \left\|\frac{\partial}{\partial y_{k}}(v-w) \right\|_{L^{2}(\mathcal{O})}\\
\nonumber & \leq & c_2\left\|v-w \right\|_{E^{\frac{1}{2}}}.
\end{eqnarray}
where $c_2=n\displaystyle \max_{k=1,...,n}\{c_k\}>0$ is independent of $t$.

Now, for $\alpha\geq  \displaystyle \frac{1}{2}$, since $E^{\alpha} \hookrightarrow E^{\frac{1}{2}}$ then there exists $c_3>0$ independent of $t$ such that
\begin{equation}
\label{Lips_3_f}
 \|\langle \vec{b}(t,\cdot),  \nabla (v-w) \rangle
\|_{E^0} \leq c_3 \left\|v-w \right\|_{E^{\alpha}},
\end{equation}
for all $t\in I$ and $v,w\in E^{\alpha}$.

Therefore, by \eqref{Lips_1_f} and \eqref{Lips_3_f}, there exists $c>0$ independent of $t$ such that
\begin{equation}
\label{Lips_4_f}
\left\|F(t,v)-F(t,w)\right\|_{E^{0}}  \leq  c \left\| v-w \right\|_{E^{\alpha}}(1+\left\| v\right\|^{\rho}_{E^{\alpha}}+\left\| w\right\|^{\rho}_{E^{\alpha}}),
\end{equation}
for all $t\in I$ and $v,w\in E^{\alpha}$.

In particular, if $v,w\in E^\alpha$ satisfy
$\left\|v\right\|_{E^\alpha}, \left\|w\right\|_{E^\alpha}<r,$
then \eqref{Lipschitz_Local_F} follows from \eqref{Lips_4_f}.

\end{proof}

Finally, we can get the problem
\eqref{equacaonoabstratanofixo} is well posed in $E^{\alpha}$, for $ \displaystyle \frac{1}{2} \leq\alpha  < \min \left\{ 1,\frac{n}{4}\right\}$.

\begin{theorem}
\label{existencialocal}
Suppose that the hypotheses \textbf{(H1)} and \textbf{(H2)} hold
and let $\displaystyle \frac{1}{2} \leq\alpha  < \min \left\{ 1,\frac{n}{4}\right\}$ be given. 
If $v_{\tau} \in E^\alpha$ with
$\left\|v_{\tau}\right\|_{E^{\alpha}}\leq r$ for some $r>0$, then there  exists a number
$T_{max}=T_{max}(\tau,\alpha, r)>\tau$ such that the problem (\ref{equacaonoabstratanofixo}) has a unique maximal mild  solution $v(\cdot,\tau;v_\tau):[\tau,T_{max})\rightarrow E^{\alpha}.$

Given $0\leq \beta\leq \alpha<1$, there exists a constant $C>0$ such that for any $x,y\in E^{\alpha}$, satisfying $\left\|x\right\|_{E^{\alpha}},\left\|y\right\|_{E^{\alpha}}\leq r$, we have 
$$\left\|v(t,\tau;x)-v(t,\tau;y)\right\|_{E^{\alpha}}\leq L(t-\tau)^{\beta-\alpha}\left\|x-y\right\|_{E^\beta},\quad \mbox{for all $t\in \tilde{I}$},
$$
where $\tilde{I}\subset [\tau,T_{max}(\tau,x))\cap[\tau,T_{max}(\tau,y)).
$
\end{theorem}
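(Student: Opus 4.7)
The plan is to obtain the local solution via the variation of constants formula and the Banach fixed point theorem, then derive the continuous-dependence estimate by combining the smoothing bounds of the evolution process $U(t,\tau)$ associated with \eqref{equacaonoabstratanofixohomogeneo_S} with a singular Gronwall argument. The hypotheses collected so far (Theorem \ref{operadorverificado} gives uniform sectoriality and uniform Hölder continuity of $A(t)$, while Lemma \ref{naolinearidadef} gives the local Lipschitz estimate \eqref{Lipschitz_Local_F}) are exactly what is needed to invoke the abstract framework of \cite{NolascoNaoAutonomo2,NolascoExistencia,Daners,Pazy}. In particular, the linear theory yields an evolution process satisfying $\|U(t,s)\|_{\mathcal{L}(E^\alpha)}\leq C$ and the smoothing bound $\|U(t,s)\|_{\mathcal{L}(E^\beta,E^\alpha)}\leq C(t-s)^{\beta-\alpha}$ for $0\leq \beta\leq\alpha<1$ and $\tau\leq s<t\leq T$.

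For the existence part, first I would fix $r>0$, pick $\delta>0$ to be chosen later, and introduce the complete metric space
\[
\mathcal{K}_{\delta,R}=\bigl\{v\in C([\tau,\tau+\delta];E^\alpha)\colon\ \|v(t)\|_{E^\alpha}\leq R\ \text{for all } t\in[\tau,\tau+\delta]\bigr\}
\]
with $R=2Cr$, endowed with the supremum norm. On $\mathcal{K}_{\delta,R}$ I would define the nonlinear map
\[
(\Phi v)(t)=U(t,\tau)v_\tau+\int_\tau^t U(t,s)F(s,v(s))\,ds.
\]
Using $\|U(t,\tau)v_\tau\|_{E^\alpha}\leq Cr$, the smoothing bound $\|U(t,s)\|_{\mathcal{L}(E^0,E^\alpha)}\leq C(t-s)^{-\alpha}$, and the local Lipschitz estimate \eqref{Lipschitz_Local_F}, which also yields a uniform bound $\|F(s,v(s))\|_{E^0}\leq M(R)$ on $\mathcal{K}_{\delta,R}$, a standard calculation shows that $\Phi$ maps $\mathcal{K}_{\delta,R}$ into itself and is a strict contraction once $\delta$ is taken small enough (the integral term is controlled by $\tfrac{C L(R)}{1-\alpha}\delta^{1-\alpha}$). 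The Banach fixed point theorem then gives a unique mild solution on $[\tau,\tau+\delta]$; by the usual continuation argument — extending the solution as long as its $E^\alpha$-norm remains finite — one obtains the existence of a maximal time $T_{max}=T_{max}(\tau,\alpha,r)>\tau$ on which the mild solution is defined.

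For the continuous-dependence estimate, let $x,y\in E^\alpha$ with $\|x\|_{E^\alpha},\|y\|_{E^\alpha}\leq r$, and write $w(t)=v(t,\tau;x)-v(t,\tau;y)$ on a common interval $\tilde I\subset[\tau,T_{max}(\tau,x))\cap[\tau,T_{max}(\tau,y))$ where both solutions remain bounded in $E^\alpha$ by some $R'$. Subtracting the two integral equations,
\[
w(t)=U(t,\tau)(x-y)+\int_\tau^t U(t,s)\bigl[F(s,v(s,\tau;x))-F(s,v(s,\tau;y))\bigr]\,ds.
\]
Applying $\|U(t,\tau)(x-y)\|_{E^\alpha}\leq C(t-\tau)^{\beta-\alpha}\|x-y\|_{E^\beta}$, together with $\|U(t,s)\|_{\mathcal{L}(E^0,E^\alpha)}\leq C(t-s)^{-\alpha}$ and \eqref{Lipschitz_Local_F} with constant $L(R')$, gives
\[
(t-\tau)^{\alpha-\beta}\|w(t)\|_{E^\alpha}\leq C\|x-y\|_{E^\beta}+CL(R')\int_\tau^t (t-\tau)^{\alpha-\beta}(t-s)^{-\alpha}\|w(s)\|_{E^\alpha}\,ds.
\]
Setting $\varphi(t)=(t-\tau)^{\alpha-\beta}\|w(t)\|_{E^\alpha}$ and bounding $(t-\tau)^{\alpha-\beta}(s-\tau)^{\beta-\alpha}\leq 1$, the singular Gronwall inequality (see, e.g., \cite{Pazy}) produces a constant $L>0$, depending on $r$ and the length of $\tilde I$, with $\varphi(t)\leq L\|x-y\|_{E^\beta}$, which is exactly the asserted estimate.

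The main obstacle is the singular estimate near $t=\tau$: controlling $\int_\tau^t(t-s)^{-\alpha}(s-\tau)^{\beta-\alpha}\,ds$ requires $\alpha<1$ and $\beta-\alpha>-1$ (both available by hypothesis) and the correct use of the singular Gronwall lemma; the rest of the argument is a direct application of the linear theory already established in Theorem \ref{operadorverificado} and the Lipschitz estimate from Lemma \ref{naolinearidadef}.
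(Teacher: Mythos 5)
Your proposal is correct and follows essentially the same route as the paper: the paper verifies uniform sectoriality and H\"older continuity of $A(t)$ (Theorem \ref{operadorverificado}) and the local Lipschitz property of $F$ (Lemma \ref{naolinearidadef}) and then simply cites the abstract local-existence and continuous-dependence lemmas of Daners--Medina, whose proofs are exactly the contraction-mapping and singular-Gronwall arguments you write out explicitly. Your version is just a self-contained expansion of the cited results; no gap.
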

\begin{proof}
The result follows from Theorem
 \ref{operadorverificado}, Lemma \ref{naolinearidadef} and \cite[Lemmas 16.1, 16.2 and 16.7]{Daners}.

\end{proof}


\vspace{0.2cm}

\section{Global existence}
\label{sec:existenciaglobal}

\vspace{0.2cm}

To study the existence of pullback attractors of a problem, as well as its asymptotic behavior, we need to ensure that the local solution of (\ref{equacaonoabstratanofixo}) is globally defined. For this, we will need of a sign condition on nonlinearity given by \textbf{(H3)} and  of \cite[Corollary 16.3]{Daners}.

This way, we obtain an estimate for our abstract nonlinear application
$F:\mathbb{R} \times E^{\alpha}  \rightarrow E^{0}$ defined  by \eqref{nao_linearidade_abstrata_F}.

\begin{lemma}
\label{limitacaoaplicacaoabstrata}
Suppose that the hypotheses \textbf{(H1)}, \textbf{(H2)} and \textbf{(H3)} hold
and let $ \displaystyle \frac{1}{2} \leq\alpha  < \min \left\{ 1,\frac{n}{4}\right\}$ and   $I\subset \mathbb{R}$ be a bounded interval. Then, there exist $K_1,K_2>0$   independent of $t$ such that
$$
\left\|F(t,v)\right\|_{E^{0}}\leq K_{1} \left\|v\right\|_{E^{\alpha}}+K_{2}, \quad \mbox{for all $t \in I$ and $v\in E^{\alpha}$}.
$$
In particular, the map $F:\mathbb{R} \times E^{\alpha}  \rightarrow E^{0}$
 takes bounded subsets of $E^{\alpha}$ into  bounded subsets of $E^{0}$.
\end{lemma}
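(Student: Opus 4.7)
The plan is to split $F(t,v) = f^e(t,v) - \langle \vec b(t,\cdot), \nabla v\rangle$ into its two constituent pieces and estimate each separately in $E^0 = L^2(\mathcal{O})$, using hypothesis \textbf{(H3)} for the Nemitski\u{\i} part and recycling the gradient estimate already worked out in the proof of Lemma \ref{naolinearidadef} for the drift part.

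First I would handle the Nemitski\u{\i} operator. Applying \textbf{(H3)} pointwise and then the Minkowski inequality in $L^2(\mathcal{O})$ gives
\[
\|f^e(t,v)\|_{E^0} = \left(\int_{\mathcal{O}} |f(t,v(y))|^2 \, \mathrm{d}y\right)^{1/2} \leq k_1 \|v\|_{L^2(\mathcal{O})} + k_2 |\mathcal{O}|^{1/2}.
\]
Since $E^\alpha \hookrightarrow E^0 = L^2(\mathcal{O})$ with continuous embedding constant independent of $t$, there is $\tilde c_1 > 0$ such that $\|v\|_{L^2(\mathcal{O})} \leq \tilde c_1 \|v\|_{E^\alpha}$, so $\|f^e(t,v)\|_{E^0} \leq k_1 \tilde c_1 \|v\|_{E^\alpha} + k_2 |\mathcal{O}|^{1/2}$, with both constants independent of $t \in I$.

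Next I would handle the drift term. As already noted in the proof of Lemma \ref{naolinearidadef}, the regularity of $r$ ensures $b_k \in C(\mathbb{R}\times\overline{\mathcal{O}})$ for $k=1,\dots,n$, so on the bounded interval $I$ each $\|b_k(t,\cdot)\|_{L^\infty(\overline{\mathcal{O}})}$ is bounded by a constant independent of $t$. Applying Cauchy--Schwarz in the inner product defining $\langle \vec b(t,y), \nabla v(y)\rangle$ and integrating, I obtain
\[
\|\langle \vec b(t,\cdot), \nabla v\rangle\|_{E^0} \leq c_2 \|v\|_{E^{1/2}}
\]
for some $c_2 > 0$ independent of $t$. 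The embedding $E^\alpha \hookrightarrow E^{1/2}$ (valid because $\alpha \geq 1/2$) then yields a constant $c_3 > 0$, independent of $t$, such that $\|\langle \vec b(t,\cdot), \nabla v\rangle\|_{E^0} \leq c_3 \|v\|_{E^\alpha}$.

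Combining the two bounds by the triangle inequality gives the desired estimate with $K_1 = k_1 \tilde c_1 + c_3$ and $K_2 = k_2 |\mathcal{O}|^{1/2}$, both independent of $t \in I$. The in-particular statement about bounded sets is then immediate: if $B \subset E^\alpha$ is bounded by $R$, then $\sup_{t\in I,\, v\in B} \|F(t,v)\|_{E^0} \leq K_1 R + K_2 < \infty$. There is no real obstacle here; the only care needed is to ensure all constants are uniform in $t \in I$, which follows from the time-independent embedding constants and the uniform $L^\infty$-boundedness of the $b_k$ on the compact set $\overline I \times \overline{\mathcal{O}}$.
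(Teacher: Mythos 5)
Your proposal is correct and follows essentially the same route as the paper: bound the Nemitski\u{\i} part via \textbf{(H3)} and Minkowski's inequality, bound the drift term by reusing the estimate \eqref{Lips_3_f} from Lemma \ref{naolinearidadef}, and combine by the triangle inequality. The only cosmetic difference is that you pass directly from $\|v\|_{L^{2}(\mathcal{O})}$ to $\|v\|_{E^{\alpha}}$, while the paper routes through $E^{\frac{1}{2}}$; both are equally valid.
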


\begin{proof}
Given $t\in I$ and  $v,w\in E^\alpha$, using the Minkowski inequality, we obtain
$$
\nonumber \displaystyle   \left\|f^e(t,v)\right\|_{E^{0}}  \nonumber \displaystyle   =
\nonumber \left\|f^e(t,v)\right\|_{L^2(\mathcal{O})} 
\nonumber  \leq   \left\| k_1|v|+k_2\right\|_{L^{2}(\mathcal{O})}
\nonumber \displaystyle  \leq   k_1\left\| v\right\|_{L^{2}(\mathcal{O})}+k_2|\mathcal{O}|^{\frac{1}{2}} 
\nonumber \displaystyle  \leq \tilde{k}_1\left\| v\right\|_{E^{\frac{1}{2}}}+\tilde{k}_2,
$$
where  $|\mathcal{O}|$ is the measure of
$\mathcal{O}$ and $\tilde{k}_1,\tilde{k}_2>0$ do not depend of
 $t$.

Now, for $\alpha > \displaystyle \frac{1}{2}$, since $E^{\alpha} \hookrightarrow E^{\frac{1}{2}}$ then there exists $\tilde{k}_3>0$ independent of $t$ such that
\begin{equation}
\label{sinal_f_2}
\left\|f^e(t,v)\right\|_{E^{0}} \leq \tilde{k}_3\left\| v\right\|_{E^{\alpha}}+\tilde{k}_2, \quad \mbox{for all $t\in I$ and $v\in E^{\alpha}$}.
\end{equation}

Moreover, by  \eqref{Lips_3_f}, there exist $\tilde{k}_4>0$ independent of $t$ such that
\begin{equation}
\label{sinal_b_1}
 \|\langle \vec{b}(t,\cdot),  \nabla v \rangle
\|_{E^0} \leq \tilde{k}_4 \left\|v \right\|_{E^{\alpha}}, \quad \mbox{for all $t\in I$ and  $v\in E^{\alpha}$}.
\end{equation}
Thus, the result
follows from
  \eqref{sinal_f_2} and \eqref{sinal_b_1}. 

\end{proof}

As a consequence of this result, we obtain the global existence of solution.

\begin{theorem}
\label{existenciaglobaldesolucoes}
Suppose that the hypotheses \textbf{(H1)}, \textbf{(H2)} and \textbf{(H3)} hold and let $ \displaystyle \frac{1}{2} \leq\alpha  < \min \left\{ 1,\frac{n}{4}\right\}$ be given. If $v_{\tau} \in E^\alpha$ with
$\left\|v_{\tau}\right\|_{E^{\alpha}}\leq r$ for some $r>0$, then the unique  mild solution $v(t,\tau;v_\tau) \in E^{\alpha}$ of problem (\ref{equacaonoabstratanofixo}) exists globally.
\end{theorem}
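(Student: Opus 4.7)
\bproof
The plan is to show that the maximal existence time $T_{\max}$ produced by Theorem \ref{existencialocal} is in fact $+\infty$ by ruling out finite-time blow-up in the $E^{\alpha}$-norm. The natural route is the standard blow-up alternative for mild solutions of abstract semilinear parabolic equations (which is precisely the content cited from \cite[Corollary 16.3]{Daners}): either $T_{\max}=\infty$ or $\limsup_{t\to T_{\max}^{-}}\|v(t,\tau;v_\tau)\|_{E^{\alpha}}=+\infty$. Hence it suffices to produce an a priori bound on $\|v(t,\tau;v_\tau)\|_{E^{\alpha}}$ that is finite on every compact subinterval of $[\tau,T_{\max})$.

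First, I would represent the mild solution via the variation of constants formula associated with the evolution process $U(t,\tau)$ constructed from \eqref{equacaonoabstratanofixohomogeneo_S}:
\[
v(t,\tau;v_\tau) \;=\; U(t,\tau)v_\tau \;+\; \int_{\tau}^{t} U(t,s)\,F(s,v(s,\tau;v_\tau))\,\mathrm{d}s,\qquad t\in[\tau,T_{\max}).
\]
Theorem \ref{operadorverificado} gives the uniform sectoriality (and Hölder continuity in $t$) of $A(t)$, so the standard parabolic smoothing estimates apply: on any bounded interval $[\tau,T]\subset[\tau,T_{\max})$ there exists $M=M(\tau,T)>0$ with
\[
\|U(t,\tau)\|_{\mathcal{L}(E^{\alpha})}\leq M,\qquad \|U(t,s)\|_{\mathcal{L}(E^{0},E^{\alpha})}\leq M\,(t-s)^{-\alpha},
\]
for $\tau\leq s<t\leq T$. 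Taking $E^{\alpha}$-norms in the variation of constants formula and invoking the sublinear bound from Lemma \ref{limitacaoaplicacaoabstrata}, I obtain
\[
\|v(t,\tau;v_\tau)\|_{E^{\alpha}} \;\leq\; M r \;+\; M K_{2}\int_{\tau}^{t}(t-s)^{-\alpha}\,\mathrm{d}s \;+\; M K_{1}\int_{\tau}^{t}(t-s)^{-\alpha}\|v(s,\tau;v_\tau)\|_{E^{\alpha}}\,\mathrm{d}s.
\]

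The second and third steps are a direct application of a singular Gronwall inequality (Henry's lemma) — legitimate because $\alpha<1$ renders the kernel $(t-s)^{-\alpha}$ integrable — which yields a bound $\|v(t,\tau;v_\tau)\|_{E^{\alpha}}\leq \Phi(t-\tau)$ with $\Phi$ continuous and nondecreasing on every finite interval. In particular, if $T_{\max}<\infty$, then $\limsup_{t\to T_{\max}^{-}}\|v(t,\tau;v_\tau)\|_{E^{\alpha}}\leq \Phi(T_{\max}-\tau)<\infty$, contradicting the blow-up alternative. Therefore $T_{\max}=\infty$ and the mild solution is globally defined.

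The main technical hurdle I expect is purely bookkeeping: verifying that the smoothing estimate with the singular kernel $(t-s)^{-\alpha}$ is available for the non-autonomous evolution process $U(t,s)$ with constants that are uniform on bounded time intervals (not merely autonomous estimates); but this is exactly what the uniform sectoriality and uniform Hölder continuity established in Theorem \ref{operadorverificado} provide, so the argument reduces to invoking \cite[Corollary 16.3]{Daners} combined with Lemma \ref{limitacaoaplicacaoabstrata}.
\eproof
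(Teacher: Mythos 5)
Your proposal is correct and follows essentially the same route as the paper: variation of constants formula, the sublinear bound on $F$ from Lemma \ref{limitacaoaplicacaoabstrata}, a singular Gronwall inequality to bound $\|v(t)\|_{E^{\alpha}}$ on finite intervals, and the continuation criterion of \cite[Corollary 16.3]{Daners}. The only cosmetic difference is that you bound the free term by $\|U(t,\tau)\|_{\mathcal{L}(E^{\alpha})}\leq M$ while the paper uses a singular $(t-\tau)^{-\alpha}$ estimate there; both suffice.
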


\begin{proof}
We just need to show that the solution does not explode in finite time intervals.

From Theorem \ref{existencialocal}, we know that there is a unique mild solution for problem (\ref{equacaonoabstratanofixo}) given by variation of constants formula
$$
v(t,\tau;v_\tau)=U(t,\tau)v_{\tau}+\int_{\tau}^{t}U(t,s)F(s,v(s,\tau;v_{\tau}))\textrm{d}s,\quad t\in [\tau,T_{max}),
$$
with initial data
$v(\tau,\tau;v_\tau)=v_\tau$, where $U(t,\tau)$ is the evolution process associated to the homogeneous problem
\eqref{equacaonoabstratanofixohomogeneo_S}. For simplicity, we will simply use $v(t):=v(t,\tau;v_\tau)$.

So, for $t\in (\tau,T]$ and  $v_{\tau} \in E^\alpha$, with
$\left\|v_{\tau}\right\|_{E^{\alpha}}\leq r$ for some $r>0$, using Lemma \ref{limitacaoaplicacaoabstrata} we have
\begin{eqnarray}
\label{eq_Gronnwall_norma_finita_1}
\nonumber \displaystyle \left\|v(t)\right\|_{E^\alpha} &\leq& \left\|U(t,\tau)v_{\tau}\right\|_{E^\alpha}+\int_{\tau}^{t}\left\|U(t,s)F(s,v(s))\right\|_{E^\alpha} \textrm{d}s \\
\nonumber \displaystyle &\leq& M_{\alpha}(t-\tau)^{-\alpha}\left\|v_{\tau}\right\|_{E^\alpha}+M_{\alpha}\int_{\tau}^{t}(t-s)^{-\alpha}\left\|F(s,v(s))\right\|_{E^0} \textrm{d}s \\
\nonumber \displaystyle &\leq& M_{\alpha}(t-\tau)^{-\alpha}\left\|v_{\tau}\right\|_{E^\alpha}+M_{\alpha}\int_{\tau}^{t}(t-s)^{-\alpha}(K_{1}\left\|v(s)\right\|_{E^\alpha}+K_2) \textrm{d}s \\
\nonumber \displaystyle &=& M_{\alpha}(t-\tau)^{-\alpha}\left\|v_{\tau}\right\|_{E^\alpha}+M_{\alpha}K_{1}\int_{\tau}^{t}(t-s)^{-\alpha}\left\|v(s)\right\|_{E^\alpha} \textrm{d}s+ \frac{M_{\alpha}K_2(T-\tau)}{1-\alpha}(t-\tau)^{-\alpha}
 \\ 
 \displaystyle & = & \displaystyle M_{\alpha}\left(\left\|v_{\tau}\right\|_{E^\alpha}+\frac{K_2(T-\tau)}{1-\alpha}\right) (t-\tau)^{-\alpha} +  M_{\alpha}K_{1}\int_{\tau}^{t}(t-s)^{-\alpha}\left\|v(s)\right\|_{E^\alpha} \textrm{d}s. 
\end{eqnarray}

Applying Gronwall's inequality in
\eqref{eq_Gronnwall_norma_finita_1} (see \cite[Corollary 16.6]{Daners}), we get there exists a constant
$c(\alpha,M_\alpha,K_1,T)>0$ such that
 $$
\left\|v(t)\right\|_{E^\alpha} \leq  \displaystyle M_{\alpha}\left(\left\|v_{\tau}\right\|_{E^\alpha}+\frac{K_2(T-\tau)}{1-\alpha}\right)c(\alpha,M_\alpha,K_1,T) (t-\tau)^{-\alpha}, \quad \mbox{for all $t\in (\tau,T]$}.
$$
Therefore, given $r>0$, there exists a constant $C=C(r)>0$ such that
$$
\sup\{ \|v(t)\|_{E^{\alpha}}:\; \|v_\tau\|_{E^{\alpha}}\leq r,\; \tau \in\mathbb{R} \;\text{ and }\; t\in [\tau, T_{max})\}\leq C.
$$
With this we conclude that
 $ \left\|v(t)\right\|_{E^\alpha}$ is bounded in finite time intervals. Consequently, by \cite[Corollary 16.3]{Daners} we have that the mild solution of (\ref{equacaonoabstratanofixo}) can be extended globally to all $t\geq \tau.$

\end{proof}

\begin{remark}
The Theorem \ref{existenciaglobaldesolucoes} 
implies that for each
$\tau\in\mathbb{R}$ and $v_{\tau}\in E^{\alpha}$, the solution $v(t,\tau;v_\tau)$ of (\ref{equacaonoabstratanofixo}), with $v(\tau,\tau;v_\tau)=v_\tau$, is defined for all
$t\geq \tau$, that is,
$T_{max}=+\infty$. Consequently, we can define a process
  $\{S(t,\tau): t\geq \tau\}$ in $E^{\alpha}$ by $S(t,\tau)v_{\tau}= v(t,\tau;v_\tau)$,
$t\geq \tau$. Moreover, each bounded subset of  $E^{\alpha}$ has bounded orbit and pullback orbit.
\end{remark}
\vspace{0.2cm}

\section{Existence of pullback attractors}
\label{Sec_principal_Atrator_Pullback}

\vspace{0.2cm}

In this section, we will prove that the problem (\ref{equacaonoabstratanofixo}) has a pullback attractor in $E^{\frac{1}{2}}$. We consider the family of operators $\left\{A(t):t\in\mathbb{R}\right\}$ defined in \eqref{operadorabstrato} with domain $D(A(t))=D$, where $D$ is given by \eqref{Domain_D_Dinfty_fixo}, uniformly sectorial and uniformly Hölder continuous for all $t\in \mathbb{R}$, with the operator
$A(t)A(\tau)^{-1}$ uniformly bounded for 
$\tau,t\in \mathbb{R}$. 

To study the pullback dynamics of the parabolic evolution equation
(\ref{equacaonoabstratanofixo}), it will be necessary 
that the solution of the homogeneous evolution equation (\ref{equacaonoabstratanofixohomogeneo_S}) has exponential decay to zero. For this, we will use the asymptotic behavior of diffeomorphism given by hypothesis \textbf{(H4)}.

Initially, we note that by hypothesis
  \textbf{(H1)} and \eqref{ajk_variavel_separada},  we can write
$$a_{jk}( t,y) =h^{2}(t) \tilde{p}_{jk}(y),
$$
for all $t \in \mathbb{R}$, $y\in \overline{\mathcal{O}}$ and $j,k=1,...,n$, where $\displaystyle \tilde{p}_{jk}(y)=\sum_{i=1}^{n}p_{ik}(y)p_{ij}(y).$ Moreover, just as in the proof of Lemma \ref{lema_elipfort}, we can use the above notation to write the following matrix
$n\times n$,  
$$
M(t,y)=\left(a_{jk}(t,y)\right)_{n\times n}=h^{2}(t)\left(\tilde{p}_{jk}(y)\right)_{n\times n}=T^{*}(t,y)T(t,y),
$$
with $T(t,y)=G(t,r(t,y))$, where $G$ is the matrix given by \eqref{matrix_G} and $T^{*}(t,y)$ is 
the transpose matrix of $T(t,y)$.

Now, by hypothesis
\textbf{(H4)} we have
$$
|h^{2}(t)-h^{2}(\tau)|\to 0, \quad \mbox{as $t-\tau \to \infty$}.
$$
Thus, looking for each projection of
 $M(t,y)$, we obtain 
$$
|a_{jk}(t,y)-a_{jk}(\tau,y)|\to 0 \quad \mbox{and} \quad \left|\frac{\partial}{\partial y_{j}}\left(a_{jk}(t,y)-a_{jk}(\tau,y)\right)\right|\to 0, \quad \mbox{as $t-\tau \to \infty$,}
$$
for all $y\in \overline{\mathcal{O}}$. Consequently,
\begin{equation}
\label{coeficientes_a_infty_jk}
\|a_{jk}(t,\cdot)-a_{jk}(\tau,\cdot)\|_{C^{1}(\overline{\mathcal{O}})}\to 0, \quad \mbox{as $t-\tau \to \infty$.}
\end{equation}
With this convergence, we can prove the following result

\begin{lemma}
\label{convergenciaainfinito}
Suppose that the hypotheses \textbf{(H1)} and \textbf{(H4)} hold. Then, 
$$\left\|( A( t) -A( \tau )) A^{-1}( r)\right\|_{\mathcal{L}(L^{2}(\mathcal{O}))}\to 0, \quad \mbox{as $t-\tau \to \infty$,}
$$
uniformly for all $r \in\mathbb{R}$.
\end{lemma}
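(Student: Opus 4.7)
The strategy is to reduce the claim to the coefficient convergence already established in the excerpt (equation \eqref{coeficientes_a_infty_jk}), combined with the uniform boundedness of $A^{-1}(r)$ from $L^2(\mathcal{O})$ into $H^2(\mathcal{O})$.

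First, I would write out the difference explicitly. For $w$ in the common domain $D$, the zeroth order term $\beta v$ cancels and a direct computation yields
\[
(A(t)-A(\tau))w = -\sum_{k,j=1}^{n}\frac{\partial}{\partial y_j}\!\left[\bigl(a_{jk}(t,y)-a_{jk}(\tau,y)\bigr)\frac{\partial w}{\partial y_k}\right],
\]
which, after expanding the outer derivative, becomes a sum of terms involving either $(a_{jk}(t,\cdot)-a_{jk}(\tau,\cdot))$ against second derivatives of $w$, or $\partial_{y_j}(a_{jk}(t,\cdot)-a_{jk}(\tau,\cdot))$ against first derivatives of $w$. Taking $L^2$-norms and using Hölder's inequality gives an estimate of the form
\[
\|(A(t)-A(\tau))w\|_{L^2(\mathcal{O})} \leq C\, \max_{j,k}\|a_{jk}(t,\cdot)-a_{jk}(\tau,\cdot)\|_{C^1(\overline{\mathcal{O}})}\, \|w\|_{H^2(\mathcal{O})},
\]
with $C>0$ a universal geometric constant.

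Second, I would use the fact that $D(A(r))=D$ is independent of $r$ (Lemma \ref{Dominio_A(t)_fixo}) and that $A(r)$ is uniformly sectorial on $L^2(\mathcal{O})$ (Theorem \ref{operadorverificado}, together with the extension of Theorem \ref{A_composta_inversaLimitado} to $L^2$). Combining this with the standard elliptic regularity statement $D \hookrightarrow H^2(\mathcal{O})$ and the closed graph theorem yields the uniform bound
\[
\|A^{-1}(r)\|_{\mathcal{L}(L^2(\mathcal{O}),H^2(\mathcal{O}))} \leq \tilde{C}, \qquad \text{for all } r\in\mathbb{R}.
\]
Applying the previous estimate with $w = A^{-1}(r)u$ for arbitrary $u\in L^2(\mathcal{O})$, one obtains
\[
\|(A(t)-A(\tau))A^{-1}(r)u\|_{L^2(\mathcal{O})} \leq C\tilde{C}\, \max_{j,k}\|a_{jk}(t,\cdot)-a_{jk}(\tau,\cdot)\|_{C^1(\overline{\mathcal{O}})}\, \|u\|_{L^2(\mathcal{O})},
\]
and taking the supremum over $\|u\|_{L^2}=1$ transfers the estimate to the operator norm, uniformly in $r$.

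Finally, the conclusion follows by invoking \eqref{coeficientes_a_infty_jk}, which gives $\max_{j,k}\|a_{jk}(t,\cdot)-a_{jk}(\tau,\cdot)\|_{C^1(\overline{\mathcal{O}})}\to 0$ as $t-\tau\to\infty$. The main subtlety, and the step I expect to need the most care, is justifying the uniform-in-$r$ bound on $A^{-1}(r)$ into $H^2(\mathcal{O})$: this cannot be read off from uniform sectoriality on $L^2$ alone, and must be extracted from the $r$-independence of the domain $D$ together with the uniform ellipticity and uniform $C^1$-bounds on the coefficients $a_{jk}(\cdot,\cdot)$ on compact time intervals, extended to all of $\mathbb{R}$ via hypothesis \textbf{(H1)}. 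Everything else reduces to the coefficient convergence already in hand.
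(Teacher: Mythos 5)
Your proposal is correct and follows essentially the same route as the paper: expand $(A(t)-A(\tau))w$ for $w$ in the common domain $D$, bound it by $\max_{j,k}\|a_{jk}(t,\cdot)-a_{jk}(\tau,\cdot)\|_{C^1(\overline{\mathcal{O}})}\|w\|_{H^2(\mathcal{O})}$, substitute $w=A^{-1}(r)u$, and invoke \eqref{coeficientes_a_infty_jk}. You are in fact slightly more careful than the paper, which passes from $\|A^{-1}(r)\tilde v\|_{H^2(\mathcal{O})}$ to $\|\tilde v\|_{L^2(\mathcal{O})}$ without comment, whereas you correctly flag the uniform-in-$r$ bound of $A^{-1}(r):L^2(\mathcal{O})\to H^2(\mathcal{O})$ as the step needing justification from the fixed domain, uniform ellipticity, and the global bounds $0<h_0\leq h(t)\leq h_1$ in \textbf{(H1)}.
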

\begin{proof}
For $\tau, t \in \mathbb{R}$ and $v\in D$.
Using Green's Theorem and Cauchy-Schwarz inequality, we have 
\begin{eqnarray}
\nonumber && \left\|( A( t) -A( \tau ))v \right\|^{2}_{L^{2}(\mathcal{O})}   
\\
\nonumber &= &  \int_{\mathcal{O}}\left|\sum_{k,j=1}^{n} \left[-\frac{\partial}{\partial y_{j}}\left(a_{jk}(t,y)\frac{\partial v}{\partial  y_{k}}\right)+\frac{\partial}{\partial y_{j}}\left(a_{jk}(\tau,y)\frac{\partial v}{\partial y_{k}}\right)\right]\right|^2 \textrm{d}y \\
\nonumber&\leq&  \sum_{k,j=1}^{n}\int_{\mathcal{O}} \left|\frac{\partial}{\partial y_{j}}\left[\left(a_{jk}(\tau,y)-a_{jk}(t,y)\right)\frac{\partial v}{\partial  y_{k}}\right]\right|^2 \textrm{d}y \\
\nonumber&\leq&  \sum_{k,j=1}^{n}\left[\int_{\mathcal{O}} \left|\frac{\partial}{\partial y_{j}}\left(a_{jk}(\tau,y)-a_{jk}(t,y)\right)\right|^2\left|\frac{\partial v}{\partial  y_{k}}\right|^2 \textrm{d}y + \int_{\mathcal{O}} \left|a_{jk}(\tau,y)-a_{jk}(t,y)\right|^2\left|
\frac{\partial^2 v}{\partial  y_{j}\partial y_{k}}\right|^2 \textrm{d}y \right]\\
\nonumber&\leq & \sum_{k,j=1}^{n} \left[ \left\|\frac{\partial}{\partial y_{j}} \left(a_{jk}(t,\cdot)-a_{jk}(\tau,\cdot)\right)\right\|^{2}_{L^{\infty}(\overline{\mathcal{O}})}\left\|
\frac{\partial v}{\partial y_{k}}\right\|^{2}_{L^{2}(\mathcal{O})}\right. \\
\nonumber &+& \left.
\|a_{jk}(t,\cdot)-a_{jk}(\tau,\cdot)\|^{2}_{L^{\infty}(\overline{\mathcal{O}})} \left\|\frac{\partial^2 v}{\partial  y_{j}\partial y_{k}}
\right\|^{2}_{L^{2}(\mathcal{O})}
\right]\\
\nonumber&\leq & \sum_{k,j=1}^{n} \|a_{jk}(t,\cdot)-a_{jk}(\tau,\cdot)\|^{2}_{C^{1}(\overline{\mathcal{O}})}\left\| v\right\|^{2}_{H^{2}(\mathcal{O})}.
\end{eqnarray}
Therefore,
$$
\left\|( A( t) -A( \tau ))v \right\|^{2}_{L^{2}(\mathcal{O})}  \leq\sum_{k,j=1}^{n} \|a_{jk}(t,\cdot)-a_{jk}(\tau,\cdot)\|^{2}_{C^{1}(\overline{\mathcal{O}})}\left\| v\right\|^{2}_{H^{2}(\mathcal{O})}, \quad \mbox{for all $v\in D$}.
$$
Replacing $v$ by $A^{-1}(r)\tilde{v}$, we have
$$
\left\|( A( t) -A( \tau )) A^{-1}(r)\tilde{v}\right\|^{2}_{L^{2}(\mathcal{O})}  \leq\sum_{k,j=1}^{n} \|a_{jk}(t,\cdot)-a_{jk}(\tau,\cdot)\|^{2}_{C^{1}(\overline{\mathcal{O}})}\left\| \tilde{v}\right\|^{2}_{L^{2}(\mathcal{O})}, \quad \mbox{for all $\tilde{v}\in L^{2}(\mathcal{O})$}.
$$

Now, taking the square root on both sides and using \eqref{coeficientes_a_infty_jk}, we obtain
$$
\|( A( t) -A( \tau )) A^{-1}(r)\|_{\mathcal{L}(L^{2}(\mathcal{O}))}\to 0, \quad\mbox{as $t-\tau \to \infty$}.
$$

\end{proof}

Using Lemma \ref{convergenciaainfinito} and adapting the result  in \cite[Theorem 8.1]{Pazy}, we conclude that the evolution process $U(t,\tau)$ of (\ref{equacaonoabstratanofixohomogeneo_S}) has exponential decay in $L^2(\mathcal{O})$.

\begin{lemma}
\label{decaimento_exponencial_do_processo}
Suppose that the hypotheses \textbf{(H1)} and \textbf{(H4)} hold. Then, there exist $K>0$ and $b>0$ independent of $t$ and $\tau$ such that
$$\left\| U(t,\tau) \right\|_{\mathcal{L}(L^2(\mathcal{O}))}\leq Ke^{-b(t-\tau)}, \quad  \mbox{for all $t\geq \tau$},
$$
where $U(t,\tau)$ is the linear evolution process associated with the  problem (\ref{equacaonoabstratanofixohomogeneo_S}).
\end{lemma}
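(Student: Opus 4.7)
The plan is to adapt \cite[Theorem 8.1]{Pazy} by comparing $U(t,\tau)$ with the autonomous semigroup generated by the ``frozen'' operator $A(\tau)$, and controlling the correction through the convergence furnished by Lemma \ref{convergenciaainfinito}.

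First, I would establish uniform exponential stability of the frozen semigroups. Because each $A(s)$ is self-adjoint with compact resolvent (domain $D$ independent of $s$, $D\hookrightarrow L^2(\mathcal{O})$ compactly), and because the uniform strong ellipticity of Lemma \ref{lema_elipfort} combined with the positive zeroth-order term $\beta I$ in \eqref{operadorabstrato} gives a G{\aa}rding-type inequality with constant independent of $s$, the spectrum of $A(s)$ lies in $[b,\infty)$ for some $b>0$ uniform in $s$. By the spectral theorem for self-adjoint operators this yields
$$
\|e^{-tA(s)}\|_{\mathcal{L}(L^2(\mathcal{O}))}\le e^{-bt},\qquad t\ge 0,\ s\in\mathbb{R}.
$$

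Next, since $D(A(t))\equiv D$ and $t\mapsto A(t)$ is H\"older continuous (Theorem \ref{operadorverificado}), differentiating $s\mapsto U(t,s)\,e^{-(s-\tau)A(\tau)}$ produces the identity
$$
U(t,\tau)=e^{-(t-\tau)A(\tau)}-\int_{\tau}^{t}U(t,s)\bigl(A(s)-A(\tau)\bigr)e^{-(s-\tau)A(\tau)}\,ds.
$$
Inserting $A(\tau)^{-1}A(\tau)$ between the two rightmost factors and taking norms in $\mathcal{L}(L^2(\mathcal{O}))$ yields
$$
\|U(t,\tau)\|_{\mathcal{L}(L^2)}\le e^{-b(t-\tau)}+\int_{\tau}^{t}\|U(t,s)\|_{\mathcal{L}(L^2)}\,\|(A(s)-A(\tau))A(\tau)^{-1}\|_{\mathcal{L}(L^2)}\,\|A(\tau)e^{-(s-\tau)A(\tau)}\|_{\mathcal{L}(L^2)}\,ds,
$$
where analyticity of the frozen semigroup gives $\|A(\tau)e^{-(s-\tau)A(\tau)}\|\le C(s-\tau)^{-1}e^{-b(s-\tau)}$, and where $\|(A(s)-A(\tau))A(\tau)^{-1}\|$ is controlled in two complementary ways: locally by the H\"older estimate $c(s-\tau)^{\theta}$ (from the proof of Theorem \ref{operadorverificado} and Theorem \ref{A_composta_inversaLimitado}), and for large $s-\tau$ by Lemma \ref{convergenciaainfinito}.

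The main obstacle, and the technical heart of the proof, is closing the above inequality with constants independent of both $t$ and $\tau$. I would split the integral at a fixed $\tau+T_{0}$: on $[\tau,\tau+T_{0}]$, apply the H\"older bound, which produces an integrable singularity of the form $(s-\tau)^{-1+\theta}e^{-b(s-\tau)}$, and use the uniform boundedness of $U$ on bounded time intervals from Section \ref{sec:problemaabstrato}; on $[\tau+T_{0},t]$, invoke Lemma \ref{convergenciaainfinito} to make $\|(A(s)-A(\tau))A(\tau)^{-1}\|\le\varepsilon$ for every $s\ge\tau+T_{0}$, and then absorb the tail via a Gronwall-type inequality after a standard substitution $\varphi(t)=e^{b(t-\tau)}\|U(t,\tau)\|$. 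Choosing $T_{0}$ large enough (equivalently $\varepsilon$ small enough) forces the absorbed coefficient to be a fraction of $b$, so that the resulting estimate reads $\|U(t,\tau)\|_{\mathcal{L}(L^2(\mathcal{O}))}\le K e^{-b'(t-\tau)}$ for some $b'\in(0,b)$ and a constant $K>0$ independent of $t\ge\tau$ and of $\tau\in\mathbb{R}$, as required.
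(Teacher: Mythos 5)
Your proposal is correct in substance and starts from exactly the same two ingredients as the paper: the uniform decay estimates $\|e^{-sA(\tau)}\|\le Me^{-\nu_0 s}$, $\|A(\tau)e^{-sA(\tau)}\|\le Ms^{-1}e^{-\nu_0 s}$ for the frozen semigroups, and the perturbation identity $U(t,\tau)=e^{-(t-\tau)A(\tau)}+\int_\tau^t U(t,z)[A(\tau)-A(z)]e^{-(z-\tau)A(\tau)}\,dz$, with the operator difference controlled through Lemma \ref{convergenciaainfinito}. Where you diverge is in how the resulting Volterra-type estimate is closed. The paper follows Pazy's construction literally: it rewrites the integral term as $\int_\tau^t e^{-(t-z)A(z)}R(z,\tau)\,dz$ with $R=\sum_m R_m$ an explicit Neumann series, and the decisive trick is the geometric-mean interpolation $\|(A(t)-A(\tau))A(r)^{-1}\|\le \tilde c\sqrt{\eta(\mu)}\,|t-\tau|^{\theta/2}$ between the H\"older bound and the smallness from \textbf{(H4)}, which makes each $R_m$ summable with a small prefactor and yields the rate $\nu=\nu_0-2(C^2\eta(\mu))^{1/\theta}>0$. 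You instead keep the implicit inequality for $\|U(t,s)\|$ and close it by splitting at $\tau+T_0$ plus a singular Gronwall argument; since the Neumann series is nothing but the iteration underlying Gronwall, the two routes are mathematically equivalent, and yours is arguably shorter to state. What the paper's version buys is an explicit, quantitative decay rate and no need to invoke uniform boundedness of $U$ on bounded intervals as a separate input.

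One step of your sketch deserves tightening. On the tail $[\tau+T_0,t]$ your kernel is $\varepsilon C(s-\tau)^{-1}$ after the substitution $\varphi(s)=e^{b(t-s)}\|U(t,s)\|$, and $(s-\tau)^{-1}$ is \emph{not} integrable over an unbounded interval, so the claim that the absorbed coefficient is ``a fraction of $b$'' is not literally what comes out: Gronwall gives a factor $\exp(\varepsilon C\log((t-\tau)/T_0))=((t-\tau)/T_0)^{\varepsilon C}$, i.e.\ polynomial growth, which is then beaten by $e^{-b(t-\tau)}$ to give the rate $b'<b$. The conclusion survives, but either phrase it that way or use the paper's interpolation $\sqrt{\eta(\mu)}(s-\tau)^{\theta/2-1}$ as the kernel throughout, which feeds directly into the singular Gronwall lemma with a genuinely small constant. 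Also, when you quote the spectral lower bound for $A(s)$ uniformly in $s\in\mathbb{R}$, make sure to derive the ellipticity constant from \textbf{(H1)} (namely $a_{jk}=h^2(t)\tilde p_{jk}$ with $h(t)\ge h_0>0$), since the constant in Lemma \ref{lema_elipfort} as stated is only uniform on compact time intervals.
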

\begin{proof}
We note first that since the operator $A(t)$ is uniformly sectorial for all $t\in\mathbb{R},$ then $-A(\tau)$ is the infinitesimal generator of an analytic semigroup $\{e^{-sA(\tau)} \in \mathcal{L}(L^2(\mathcal{O})):\; s\geq 0\}$. With this and of the fact that $0 \in \rho(A(t))$, follows that there are constants $\nu_{0}>0$ and $M>0$ independent of $t,\tau\in \mathbb{R}$ such that
 \begin{eqnarray}
\label{semigrupo_1} \left\|e^{-sA(\tau)}\right\|_{\mathcal{L}(L^2(\mathcal{O}))} \leq Me^{-\nu_{0}s}, \quad \mbox{for $s\geq 0$},\\
\label{operadorsemigrupo_1}\left\|A(\tau)e^{-sA(\tau)}\right\|_{\mathcal{L}(L^2(\mathcal{O}))} \leq Ms^{-1}e^{-\nu_{0}s}, \quad \mbox{for $s>0$}.
\end{eqnarray}

We define 
\begin{equation}
\label{Def_Rho_Pazy}
\eta(\mu)=\sup_{r\in \mathbb{R}}\{ \left\|(A(t)-A(\tau))A(r)^{-1}\right\|_{\mathcal{L}(L^2(\mathcal{O}))}:\; 0\leq \mu \leq t-\tau\}.
\end{equation}

By hypothesis and Lemma \ref{convergenciaainfinito}, $\eta(\mu)$ is finite for $\mu \geq 0$ and
$$\eta(\mu) \to 0, \quad \textrm{as $\mu \to \infty$}.
$$ 

Combining (\ref{Def_Rho_Pazy}) and  the fact that  $A(t)$ is uniformly Hölder continuous for all $t\in \mathbb{R}$, there are $C>0$ and $\theta \in (0,1]$ independent of $t$, $\tau$ and $r$ such that
\begin{eqnarray}
\nonumber \displaystyle & & \left\|(A(t)-A(\tau))A(r)^{-1}\right\|^{2}_{\mathcal{L}(L^2(\mathcal{O}))} \\
\nonumber  &=& \displaystyle \left\|(A(t)-A(\tau))A(r)^{-1}\right\|_{\mathcal{L}(L^2(\mathcal{O}))} \left\|(A(t)-A(\tau))A(r)^{-1}\right\|_{\mathcal{L}(L^2(\mathcal{O}))} \\
\nonumber  & \leq & \displaystyle  \eta(\mu)C|t-\tau|^{\theta}, \quad \mbox{for $\mu \leq t-\tau$}. 
\end{eqnarray}
Thus,
\begin{equation}
\label{desigualdeR_1}
\left\|(A(t)-A(\tau))A(r)^{-1}\right\|_{\mathcal{L}(L^2(\mathcal{O}))} \leq \tilde{c} (\eta(\mu)|t-\tau|^{\theta})^{\frac{1}{2}}=\tilde{c}\sqrt{\eta(\mu)}|t-\tau|^{\frac{\theta}{2}},  \quad \mbox{for $\mu \leq t-\tau$},  \end{equation}
and for some $\tilde{c}>0$ independent of $t$, $\tau$ and $r$, with $r\in \mathbb{R}$.

We note that $U(t,\tau)$ is given by
$$U(t,\tau)=e^{-(t-\tau)A(\tau)}+\int_{\tau}^{t}U(t,z)\left[A(\tau)-A(z)\right]e^{-(z-\tau)A(\tau)}\textrm{d}z.$$
Moreover, using the construction in
 \cite[Section 5.6]{Pazy}, we can write 
$$\int_{\tau}^{t}U(t,z)\left[A(\tau)-A(z)\right]e^{-(z-\tau)A(\tau)}dz=\int_{\tau}^{t}e^{-(t-z)A(z)}R(z,\tau)\textrm{d}z,$$
where $\displaystyle R(z,\tau)=\sum_{m=1}^{\infty}R_{m}(z,\tau)$, with the sequence of operators $R_{m}$ defined inductively by
\begin{equation}
\nonumber
\left\{
\begin{array}{lll}
\displaystyle R_{1}(t,\tau)=(A(\tau)-A(t))e^{-(t-\tau)A(\tau)}\\
\displaystyle R_{m}(t,\tau)= \displaystyle \int_{\tau}^{t}R_{1}(t,z)R_{m-1}(z,\tau)\textrm{d}z, & m\geq 2.
\end{array}
\right.
\end{equation}
Consequently,
\begin{equation}
\label{equacaointegralabstratanaoautonoma_PAZY}
U(t,\tau)=e^{-(t-\tau)A(\tau)}+\int_{\tau}^{t}e^{-(t-z)A(z)}R(z,\tau)\textrm{d}z.
\end{equation}

We note
\begin{eqnarray}
\nonumber \displaystyle \left\|R_{1}(t,\tau)\right\|_{\mathcal{L}(L^2(\mathcal{O}))} &=&  \left\|(A(\tau)-A(t))e^{-(t-\tau)A(\tau)}\right\|_{\mathcal{L}(L^2(\mathcal{O}))}\\ 
\nonumber \displaystyle &=&  \left\|(A(\tau)-A(t))A(\tau)^{-1}A(\tau)e^{-(t-\tau)A(\tau)}\right\|_{\mathcal{L}(L^2(\mathcal{O}))}\\
\nonumber \displaystyle &\leq&   \left\|(A(\tau)-A(t))A(\tau)^{-1}\right\|_{\mathcal{L}(L^2(\mathcal{O}))}\left\|A(\tau)e^{-(t-\tau)A(\tau)}\right\|_{\mathcal{L}(L^2(\mathcal{O}))} \\
\nonumber \displaystyle &=&   \left\|(A(t)-A(\tau))A(\tau)^{-1}\right\|_{\mathcal{L}(L^2(\mathcal{O}))}\left\|A(\tau)e^{-(t-\tau)A(\tau)}\right\|_{\mathcal{L}(L^2(\mathcal{O}))}. 
\end{eqnarray}

Using (\ref{operadorsemigrupo_1}) and (\ref{desigualdeR_1}), we have
\begin{eqnarray}
\nonumber
    \left\|R_{1}(t,\tau)\right\|_{\mathcal{L}(L^2(\mathcal{O}))} \leq \displaystyle \tilde{c}\sqrt{\eta(\mu)}(t-\tau)^{\frac{\theta}{2}} \frac{M}{t-\tau}e^{-\nu_{0}(t-\tau)} 
 \nonumber
   = \displaystyle C\sqrt{\eta(\mu)}(t-\tau)^{\frac{\theta}{2}-1} e^{-\nu_{0}(t-\tau)}, 
\end{eqnarray}
for $0\leq \mu \leq  t-\tau$, and by induction on $m\in\mathbb{N}$, it follows that
\begin{eqnarray}
\nonumber \displaystyle \left\|R_{m}(t,\tau)\right\|_{\mathcal{L}(L^2(\mathcal{O}))} = \left\|\int_{\tau}^{t}R_{1}(t,z)R_{m-1}(z,\tau)dz\right\|_{\mathcal{L}(L^2(\mathcal{O}))}
\nonumber \displaystyle \leq \frac{e^{-\nu_{0}(t-\tau)}}{t-\tau}\frac{(C\sqrt{\eta(\mu)}(t-\tau)^{\frac{\theta}{2}})^{m}}{\Gamma(\frac{m\theta}{2})}.
\end{eqnarray}

For $\beta \in (0,1]$ there is a constant $C_\beta>0$ such that
\begin{equation}
\label{seriepotenciagamme}
\sum_{m=1}^{\infty} \frac{x^{m}}{\Gamma(m\beta)}\leq C_{\beta}xe^{2x^{\frac{1}{\beta}}}, \quad \mbox{for all  $x \geq 0$}.
\end{equation}

Since $\displaystyle \theta \in (0,1]$  then $\frac{\theta}{2} \in \left(0,\frac{1}{2}\right]$ and $C\sqrt{\eta(\mu)}(t-\tau)^{\frac{\theta}{2}}\geq 0$ for $t \geq \tau.$ Thus, using (\ref{seriepotenciagamme}),  for $0\leq \mu \leq t- \tau$, we have
\begin{eqnarray}
 \nonumber \displaystyle \sum_{m=1}^{\infty}\left\| R_{m}(t,\tau)\right\|_{\mathcal{L}(L^2(\mathcal{O}))}&\leq & \sum_{m=1}^{\infty}\frac{e^{-\nu_{0}(t-\tau)}}{t-\tau}\frac{(C\sqrt{\eta(\mu)}(t-\tau)^{\frac{\theta}{2}})^{m}}{\Gamma(\frac{m\theta}{2})}  \\
 \nonumber \displaystyle &\leq & \displaystyle \frac{e^{-\nu_{0}(t-\tau)}}{t-\tau}\sum_{m=1}^{\infty}\frac{(C\sqrt{\eta(\mu)}(t-\tau)^{\frac{\theta}{2}})^{m}}{\Gamma(\frac{m\theta}{2})}  \\
 \nonumber \displaystyle &\leq & \frac{e^{-\nu_{0}(t-\tau)}}{t-\tau}C_{\theta}C\sqrt{\eta(\mu)}(t-\tau)^{\frac{\theta}{2}}e^{2({C\sqrt{\eta(\mu)}(t-\tau)^{\frac{\theta}{2}})^{\frac{2}{\theta}}}}\\
 \nonumber \displaystyle &\leq & \displaystyle \frac{e^{-\nu_{0}(t-\tau)}}{t-\tau}C_{\theta}C\sqrt{\eta(\mu)}(t-\tau)^{\frac{\theta}{2}}e^{2({C^{2}\eta(\mu))^{\frac{1}{\theta}}(t-\tau)}}\\ 
\nonumber \displaystyle &\leq & \displaystyle C_{\theta}C\sqrt{\eta(\mu)}(t-\tau)^{\frac{\theta}{2}-1}e^{2({C^{2}\eta(\mu))^{\frac{1}{\theta}}(t-\tau)}-\nu_{0}(t-\tau)}\\
\nonumber \displaystyle &\leq & \displaystyle  C_{\theta} C \sqrt{\eta(\mu)}(t-\tau)^{\frac{\theta}{2}-1}e^{-\left[\nu_{0}-2(C^{2}\eta(\mu))^{\frac{1}{\theta}}\right](t-\tau)}.
\end{eqnarray}

Since $\eta(\mu)\to 0$ as $\mu\to \infty$, then there exist $\mu_{0}>0$ such that for $\mu_0 \leq \mu \leq t-\tau$, we have $\nu=\nu_{0}-2(C^{2}\eta(\mu))^{\frac{1}{\theta}}>0$ and 
\begin{equation}
\label{equacaointegralabstratanaoautonoma_PAZY_2}
\left\| R(t,\tau) \right\|_{\mathcal{L}(L^2(\mathcal{O}))}\leq \sum_{m=1}^{\infty}\left\| R_{m}(t,\tau)\right\|_{\mathcal{L}(L^2(\mathcal{O}))} \leq C_{\theta} C \sqrt{\eta(\mu)}(t-\tau)^{\frac{\theta}{2}-1}e^{-\nu(t-\tau)}.
\end{equation}
Using (\ref{semigrupo_1}) and (\ref{equacaointegralabstratanaoautonoma_PAZY_2}), we have
\begin{eqnarray}
\nonumber\displaystyle \left\|\int_{\tau}^{t}e^{-(t-z)A(z)}R(z,\tau)\textrm{d}z\right\|_{\mathcal{L}(L^2(\mathcal{O}))} &\leq & \int_{\tau}^{t}\left\|e^{-(t-z)A(z)}R(z,\tau)\right\|_{\mathcal{L}(L^2(\mathcal{O}))}\textrm{d}z \\
\nonumber \displaystyle &\leq & \int_{\tau}^{t}\left\|e^{-(t-z)A(z)}\right\|_{\mathcal{L}(L^2(\mathcal{O}))} \left\|R(z,\tau)\right\|_{\mathcal{L}(L^2(\mathcal{O}))}\textrm{d}z \\
\nonumber \displaystyle &\leq & MC_{\theta} C \sqrt{\eta(\mu)}\int_{\tau}^{t}e^{-\nu_{0}(t-z)} (z-\tau)^{\frac{\theta}{2}-1}e^{-\nu(z-\tau)}\textrm{d}z\\
\nonumber \displaystyle &\leq & MC_{\theta} C \sqrt{\eta(\mu)}\int_{\tau}^{t}e^{-b(t-z)} (z-\tau)^{\frac{\theta}{2}-1}e^{-b(z-\tau)}\textrm{d}z\\
\nonumber \displaystyle &\leq & MC_{\theta} C  \sqrt{\eta(\mu)}\int_{\tau}^{t} (z-\tau)^{\frac{\theta}{2}-1}e^{-b(t-\tau)}\textrm{d}z\\
\nonumber \displaystyle &= & MC_{\theta} C  \sqrt{\eta(\mu)}e^{-b(t-\tau)}\int_{\tau}^{t} (z-\tau)^{\frac{\theta}{2}-1}\textrm{d}z \\
\displaystyle &= & \tilde{K} \sqrt{\eta(\mu)}e^{-b(t-\tau)} (t-\tau)^{\frac{\theta}{2}},
\label{equacaointegralabstratanaoautonoma_PAZY_1} 
\end{eqnarray}
where $\tilde{K}= \frac{2}{\theta}MC_{\theta} C>0$ and $b = \min\{\nu_{0},\nu\}>0$ are independent of $t$ and $\tau$.

So, from  (\ref{semigrupo_1}), 
(\ref{equacaointegralabstratanaoautonoma_PAZY}) and (\ref{equacaointegralabstratanaoautonoma_PAZY_1}), we deduce
\begin{eqnarray}
\nonumber \displaystyle   \left\|U(t,\tau)\right\|_{\mathcal{L}(L^2(\mathcal{O}))} &\leq &\left\| e^{-(t-\tau)A(\tau)}\right\|_{\mathcal{L}(L^2(\mathcal{O}))}+\left\|\int_{\tau}^{t}e^{-(t-z)A(z)}R(z,\tau)\textrm{d}z\right\|_{\mathcal{L}(L^2(\mathcal{O}))}\\
\nonumber \displaystyle &\leq& Me^{-\nu_{0}(t-\tau)}+ \tilde{K} \sqrt{\eta(\mu)}e^{-b(t-\tau)} (t-\tau)^{\frac{\theta}{2}}\\
\nonumber \displaystyle &\leq& e^{-b(t-\tau)}\left[ M+ \tilde{K} \sqrt{\eta(\mu)} (t-\tau)^{\frac{\theta}{2}}\right].
\end{eqnarray}

Finally, using again
 $\eta(\mu)\to 0$ as $\mu\to \infty$ (so $t-\tau \to \infty$), there is a constant $K>0$ independent of $t$ and $\tau$ such that
$$\left\|U(t,\tau)\right\|_{\mathcal{L}(L^2(\mathcal{O}))} \leq Ke^{-b(t-\tau)},\quad t\geq \tau.
$$
\end{proof}

Now, we conclude that
 the evolution process $U(t,\tau)$ of (\ref{equacaonoabstratanofixohomogeneo_S}) has exponential decay in $E^{\alpha}$, for $\displaystyle  \frac{1}{2}\leq\alpha<1$.

\begin{lemma}
\label{decaimentoexponencial}
Suppose that the hypotheses \textbf{(H1)} and \textbf{(H4)} hold. Then, there exist $b>0$, $M>0$ and $M_\alpha>0$ such that the linear evolution process $U(t,\tau)$ associated with the homogeneous equation \eqref{equacaonoabstratanofixohomogeneo_S} satisfies
\begin{equation}
\label{decaimentoexponencialdoprocesso}
\left\|U(t,\tau)w\right\|_{E^{\alpha}} \leq Me^{-b(t-\tau)}\left\|w\right\|_{E^{\alpha}}, \quad  t\geq \tau,  
 \end{equation}
for all  $w\in E^{\alpha}$ and, for each bounded time interval $I=[-T,T]$ ($T>0$),
\begin{equation}
\label{decaimentoexponencialdoprocesso2}
\displaystyle 
\left\|U(t,\tau)w\right\|_{E^{\alpha}} \leq M_{\alpha}(t-\tau)^{-\alpha}e^{-b(t-\tau)}\left\|w\right\|_{E^{0}}, 
\end{equation}
for all  $w\in E^{0}$ and for all  $ -T\leqslant \tau< t\leqslant T$.
\end{lemma}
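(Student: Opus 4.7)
The plan is to bootstrap the $L^{2}$-exponential decay of Lemma \ref{decaimento_exponencial_do_processo} up the fractional power scale, using the smoothing properties of the non-autonomous process $U(t,\tau)$. Two ingredients from the general theory of \cite{NolascoNaoAutonomo2,NolascoExistencia,Daners}, which apply because Theorem \ref{operadorverificado} gives uniform sectoriality and uniform Hölder continuity of $\{A(t)\}_{t\in\mathbb{R}}$ with $t$-independent domain, will be taken as available: the local smoothing bound
$$
\|U(t,\tau)\|_{\mathcal{L}(E^{0},E^{\alpha})}\leq C_{\alpha}\,(t-\tau)^{-\alpha},\qquad 0<t-\tau\leq 1,
$$
and the local boundedness
$$
\|U(t,\tau)\|_{\mathcal{L}(E^{\alpha})}\leq C,\qquad 0\leq t-\tau\leq 1,
$$
both with constants independent of $\tau\in\mathbb{R}$ (these follow by applying the standard Yosida/Pazy-type estimates to the representation \eqref{equacaointegralabstratanaoautonoma_PAZY} used in the previous lemma, since $E^{\alpha}$ coincides with the fractional powers of $A(\tau)$ up to uniformly equivalent norms).

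To prove \eqref{decaimentoexponencialdoprocesso2}, I would split into the two regimes $t-\tau\leq 1$ and $t-\tau>1$. In the first regime, the local smoothing estimate gives $\|U(t,\tau)w\|_{E^{\alpha}}\leq C_{\alpha}(t-\tau)^{-\alpha}\|w\|_{E^{0}}$, and since $e^{-b(t-\tau)}\geq e^{-b}$ on this interval we absorb the missing exponential into the constant. In the second regime, use the evolution property $U(t,\tau)=U(t,t-1)\,U(t-1,\tau)$, together with the smoothing bound on the unit-length interval $[t-1,t]$ and Lemma \ref{decaimento_exponencial_do_processo} on $[\tau,t-1]$, to obtain
$$
\|U(t,\tau)w\|_{E^{\alpha}}\leq C_{\alpha}\cdot K e^{-b(t-1-\tau)}\|w\|_{L^{2}}\leq C_{\alpha} K e^{b}\,e^{-b(t-\tau)}\|w\|_{E^{0}};
$$
since $(t-\tau)^{-\alpha}\leq 1$ on this regime, the factor $(t-\tau)^{-\alpha}$ can be freely inserted, yielding \eqref{decaimentoexponencialdoprocesso2} with a single constant $M_{\alpha}$. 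The restriction to bounded intervals $[-T,T]$ enters only through the constant $C_{\alpha}$ (the fractional-power norms are uniformly equivalent on such intervals).

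To prove \eqref{decaimentoexponencialdoprocesso}, I would argue similarly. For $0\leq t-\tau\leq 1$, the uniform boundedness $\|U(t,\tau)\|_{\mathcal{L}(E^{\alpha})}\leq C$ together with $e^{-b(t-\tau)}\geq e^{-b}$ immediately yields the claim. For $t-\tau>1$, apply \eqref{decaimentoexponencialdoprocesso2} together with the continuous embedding $E^{\alpha}\hookrightarrow E^{0}$:
$$
\|U(t,\tau)w\|_{E^{\alpha}}\leq M_{\alpha}(t-\tau)^{-\alpha}e^{-b(t-\tau)}\|w\|_{E^{0}}\leq M_{\alpha}\,c_{\alpha}\,e^{-b(t-\tau)}\|w\|_{E^{\alpha}},
$$
and we take the maximum of the two constants obtained. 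The main obstacle is really not algebraic but bookkeeping: ensuring that all local smoothing constants coming out of the representation formula \eqref{equacaointegralabstratanaoautonoma_PAZY} are truly independent of $\tau$, which in turn reduces to the uniformity already established in Theorem \ref{operadorverificado} and Theorem \ref{A_composta_inversaLimitado}; once that is granted, the splitting-into-two-regimes argument combined with the already-proven $L^{2}$ decay delivers both estimates.
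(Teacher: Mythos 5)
Your overall strategy is sound and in fact more self-contained than the paper's proof, but it takes a different route. The paper disposes of \eqref{decaimentoexponencialdoprocesso} in one line by observing that, thanks to the uniform boundedness of $A(t)A(\tau)^{-1}$ (Theorem \ref{A_composta_inversaLimitado}) and Lemma \ref{convergenciaainfinito}, the Pazy-type construction of Lemma \ref{decaimento_exponencial_do_processo} can simply be re-run in $E^{\alpha}$ instead of $L^{2}(\mathcal{O})$; and it obtains \eqref{decaimentoexponencialdoprocesso2} by directly citing the smoothing-with-decay estimate $\|A^{\alpha}(\xi)U(t,\tau)w\|_{E^{0}}\leq MK(\alpha)(t-\tau)^{-\alpha}e^{-b(t-\tau)}\|w\|_{E^{0}}$ from \cite[Proposition 3.11]{NolascoNaoAutonomo2} and \cite[Lemma 2.9]{NolascoExistencia}. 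You instead bootstrap the already-proved $L^{2}$ decay up to $E^{\alpha}$ by splitting at $t-\tau=1$ and using the cocycle identity $U(t,\tau)=U(t,t-1)U(t-1,\tau)$ together with a $\tau$-uniform local smoothing bound. That is a legitimate and standard alternative; the $\tau$-uniformity of the local constants does hold here because, under \textbf{(H1)}, $a_{jk}(t,y)=h^{2}(t)\tilde{p}_{jk}(y)$ with $0<h_{0}\leq h\leq h_{1}$ globally, so the sectoriality and H\"older constants of Theorem \ref{operadorverificado} do not degrade as $\tau$ ranges over $\mathbb{R}$.

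There is, however, one genuine logical slip in your treatment of the regime $t-\tau>1$ for \eqref{decaimentoexponencialdoprocesso2}: from $\|U(t,\tau)w\|_{E^{\alpha}}\leq C_{\alpha}Ke^{b}e^{-b(t-\tau)}\|w\|_{E^{0}}$ you cannot conclude the bound $M_{\alpha}(t-\tau)^{-\alpha}e^{-b(t-\tau)}\|w\|_{E^{0}}$ by noting that $(t-\tau)^{-\alpha}\leq 1$ — inserting a factor $\leq 1$ on the right-hand side makes the target \emph{smaller}, so the implication goes the wrong way, and indeed no single $M_{\alpha}$ works for all $t-\tau$ with the same rate $b$. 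The fix is either (i) to use that on the bounded interval $[-T,T]$ one has $(t-\tau)^{-\alpha}\geq (2T)^{-\alpha}$, so the insertion costs only a $T$-dependent constant (which is all the lemma claims), or (ii) to halve the rate, writing $e^{-b s}\leq C's^{-\alpha}e^{-(b/2)s}$ for $s\geq 1$. Relatedly, in your proof of \eqref{decaimentoexponencialdoprocesso} for $t-\tau>1$ you should quote the intermediate estimate $\|U(t,\tau)w\|_{E^{\alpha}}\leq Ce^{-b(t-\tau)}\|w\|_{E^{0}}$ (which your splitting argument gives globally) rather than \eqref{decaimentoexponencialdoprocesso2}, since the latter is only asserted on bounded intervals. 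With these adjustments the argument is complete.
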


\begin{proof}
From Theorem \ref{A_composta_inversaLimitado} we have $A(t)A(\tau)^{-1}$ is uniformly bounded, for  $\tau,t\in \mathbb{R}$. Hence and Lemma \ref{convergenciaainfinito}, we can apply Lemma \ref{decaimento_exponencial_do_processo} and get the inequality
(\ref{decaimentoexponencialdoprocesso}).

Now, for each bounded time interval $I=[-T,T]$ ($T>0$), 
(\ref{decaimentoexponencialdoprocesso2}) follows from Theorem \ref{operadorverificado} jointly with the fact that
\[
\left\|A^{\alpha}(\xi)
U(t,\tau)w\right\|_{E^{0}} \leq M K(\alpha)(t-\tau)^{-\alpha}e^{-b(t-\tau)}\left\|w\right\|_{E^{0}},\quad \mbox{for all $-T\leqslant \tau< t\leqslant T$},
\]
according to  \cite[Proposition 3.11]{NolascoNaoAutonomo2} and 
\cite[Lemma 2.9]{NolascoExistencia}.
\end{proof}

Now, we can prove that the process $\{S(t,\tau): t\geq \tau\}$ generated by (\ref{equacaonoabstratanofixo}) is strongly pullback bounded dissipative in the sense of \cite[Definition 2.22]{NolascoAtratores}.

\begin{lemma}
\label{limitacaotinfinitodassoluções}
Suppose that the hypotheses \textbf{(H1)}, \textbf{(H2)}, \textbf{(H3)} and \textbf{(H4)} hold and let $ \displaystyle \frac{1}{2} \leq\alpha  < \min \left\{ 1,\frac{n}{4}\right\}$ be given. If $v_{\tau} \in E^\alpha$ with
$\left\|v_{\tau}\right\|_{E^{\alpha}}\leq r$ for some $r>0$, then there exists a constant $R>0$ independent of $t$ such that
$$
\lim_{\tau \rightarrow - \infty}\left\|v(t,\tau;v_\tau)\right\|_{E^{\alpha}} 
 \leq R, 
$$
where $v(t,\tau;v_\tau)$ is the mild solution of  (\ref{equacaonoabstratanofixo}). Consequently, the process
 $\{S(t,\tau): t\geq \tau\}$ 
 is strongly pullback bounded dissipative.
\end{lemma}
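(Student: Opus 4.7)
The strategy is to combine the variation-of-constants formula
\[
v(t,\tau;v_\tau)=U(t,\tau)v_\tau+\int_\tau^t U(t,s)F(s,v(s,\tau;v_\tau))\,\textrm{d}s
\]
with the exponential decay of the linear process from Lemma \ref{decaimentoexponencial} and the sublinear growth of $F$ from Lemma \ref{limitacaoaplicacaoabstrata}, and to isolate an $r$-independent asymptotic bound by letting $\tau\to-\infty$ at fixed $t$.

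Taking the $E^\alpha$-norm and applying \eqref{decaimentoexponencialdoprocesso}, \eqref{decaimentoexponencialdoprocesso2} together with $\|F(s,v)\|_{E^0}\leq K_1\|v\|_{E^\alpha}+K_2$, I would arrive at
\[
\|v(t)\|_{E^\alpha}\leq M\,e^{-b(t-\tau)}r+M_\alpha K_2 C_0+M_\alpha K_1\int_\tau^t(t-s)^{-\alpha}e^{-b(t-s)}\|v(s)\|_{E^\alpha}\,\textrm{d}s,
\]
where $C_0:=\int_0^\infty \sigma^{-\alpha}e^{-b\sigma}\,\textrm{d}\sigma=b^{\alpha-1}\Gamma(1-\alpha)$ is finite since $\alpha<1$, so the $K_2$-contribution is already bounded by a constant independent of $\tau$ and $t$.

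To close this singular inequality, I would first set $\Phi(t):=\sup_{\tau\leq s\leq t}\|v(s)\|_{E^\alpha}$: under the smallness condition $q:=M_\alpha K_1 C_0<1$ (secured by choosing $\beta$, and hence $b$, sufficiently large), one obtains the uniform bound $\Phi(t)\leq(Mr+M_\alpha K_2 C_0)/(1-q)$. Substituting this back into the integral and iterating (a Henry-type singular Gronwall, or equivalently a contraction argument for the integral operator in a weighted sup-norm) yields a decomposition of the form
\[
\|v(t)\|_{E^\alpha}\leq \tilde{C}\,e^{-\eta(t-\tau)}r+R,
\]
with $\eta>0$, $\tilde{C}>0$, and $R>0$ depending only on $M,M_\alpha,K_1,K_2,b,\alpha$. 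Letting $\tau\to-\infty$ at fixed $t$ kills the first term uniformly in $r$, giving $\limsup_{\tau\to-\infty}\|v(t,\tau;v_\tau)\|_{E^\alpha}\leq R$; strong pullback bounded dissipativity in the sense of \cite{NolascoAtratores} then follows by taking the closed ball of $E^\alpha$ of radius $R+1$ as a time-independent pullback absorbing set for every bounded set of initial data.

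The main obstacle is the Gronwall closure and, crucially, the separation of the decaying $r$-dependent contribution from the $r$-free constant $R$: the naive sup-estimate alone only produces $\Phi(t)\lesssim r$, so the iteration step is what makes the absorbing bound uniform in the initial data. A secondary technicality is the interval-dependence of $M_\alpha$ in \eqref{decaimentoexponencialdoprocesso2}: since $A(t)$ is uniformly sectorial on all of $\mathbb{R}$ by Theorem \ref{operadorverificado}, that constant is in fact uniform and the estimate extends to every $\tau<t$, as needed for the pullback limit.
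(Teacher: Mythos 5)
Your overall architecture coincides with the paper's: variation of constants, the exponential decay of $U(t,\tau)$ from Lemma \ref{decaimentoexponencial}, the affine bound $\|F(s,v)\|_{E^0}\leq K_1\|v\|_{E^\alpha}+K_2$ from Lemma \ref{limitacaoaplicacaoabstrata}, a Gronwall closure of the resulting singular integral inequality, and finally letting $\tau\to-\infty$ so that the $r$-dependent term $M r e^{-b(t-\tau)}$ disappears and only the $r$-free constant $R$ survives. The one place where you genuinely diverge is the closure step. The paper invokes a ready-made singular Gronwall inequality (\cite[Lemma 6.23]{NolascoAtratores}) and obtains directly
\[
\|v(t)\|_{E^\alpha}\leq\bigl[Mre^{-b(t-\tau)}+M_\alpha K_2 b^{\alpha-1}\Gamma(1-\alpha)\bigr]e^{M_\alpha K_1 b^{\alpha-1}\Gamma(1-\alpha)},
\]
with \emph{no smallness condition whatsoever} on $q:=M_\alpha K_1 b^{\alpha-1}\Gamma(1-\alpha)$; the factor $e^{q}$ is simply a fixed constant. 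Your route instead requires $q<1$ to run the sup-norm contraction and the subsequent iteration. That is an additional hypothesis not present in the lemma: you propose to secure it by enlarging $\beta$ (hence $b$), but the statement fixes $\beta$ once and for all earlier in the paper (it is chosen "large" only to guarantee invertibility and the uniform bounds of Theorem \ref{A_composta_inversaLimitado}), and you would also need to check that $M_\alpha$ does not grow with $b$. So your argument is conditionally correct but strictly weaker than the paper's; replacing the contraction step by the cited singular Gronwall lemma removes the condition and also spares you the iteration needed to separate the decaying $r$-term from the absorbing constant, since the Gronwall output already has that additive structure. On the plus side, your closing remark about the interval-dependence of $M_\alpha$ in \eqref{decaimentoexponencialdoprocesso2} identifies a point the paper uses silently in the pullback limit, and your justification (uniform sectoriality on all of $\mathbb{R}$) is the right one.
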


\begin{proof}
Initially, from Theorems \ref{existencialocal} and \ref{existenciaglobaldesolucoes}, we have that there exist a unique global solution  for (\ref{equacaonoabstratanofixo}) given by variation of constants formula
$$
v(t,\tau;v_\tau)=U(t,\tau)v_{\tau}+\int_{\tau}^{t}U(t,s)F(s,v(s,\tau;v_{\tau}))\textrm{d}s,\quad t\geq \tau,
$$
with initial data $v(\tau,\tau;v_\tau)=v_\tau$, where $U(t,\tau)$ is the evolution process associated to the homogeneous problem (\ref{equacaonoabstratanofixohomogeneo_S}). For simplicity, we will simply use $v(t):=v(t,\tau;v_\tau)$.

Taking $t\geq  \tau$ and $v_{\tau} \in E^\alpha$, with
$\left\|v_{\tau}\right\|_{E^{\alpha}}\leq r$ for some $r>0$, and using Lemmas \ref{decaimentoexponencial} and \ref{limitacaoaplicacaoabstrata}, we have
\begin{eqnarray}
\label{eq_Gronnwall_norma_finita_2_N} 
\nonumber \displaystyle \left\|v(t)\right\|_{E^\alpha}
\nonumber \displaystyle &\leq &\left\|U(t,\tau)v_{\tau}\right\|_{E^\alpha}+\int_{\tau}^{t}\left\|U(t,s)F(s,v(s))\right\|_{E^\alpha} \textrm{d}s \\
\nonumber \displaystyle &\leq & M e^{-b(t-\tau)}\left\|v_{\tau}\right\|_{E^\alpha}+M_{\alpha}\int_{\tau}^{t}(t-s)^{-\alpha}e^{-b(t-s)}\left\|F(s,v(s))\right\|_{E^0}  \textrm{d}s \\
\nonumber \displaystyle &\leq & Me^{-b(t-\tau)}\left\|v_{\tau}\right\|_{E^\alpha}+M_{\alpha}\int_{\tau}^{t} (t-s)^{-\alpha}e^{-b(t-s)} (K_{1}\left\|v(s)\right\|_{E^\alpha}+K_2) \textrm{d}s \\
\nonumber \displaystyle &= & Me^{-b(t-\tau)}\left\|v_{\tau}\right\|_{E^\alpha}+M_{\alpha}K_1\int_{\tau}^{t} (t-s)^{-\alpha}e^{-b(t-s)} \left\|v(s)\right\|_{E^\alpha} \textrm{d}s\\
\nonumber \displaystyle &+ & M_{\alpha}K_2\int_{\tau}^{t} (t-s)^{-\alpha}e^{-b(t-s)} \textrm{d}s\\
\displaystyle &\leq & Me^{-b(t-\tau)}\left\|v_{\tau}\right\|_{E^\alpha}+M_{\alpha}K_2 b^{\alpha-1}\Gamma(1-\alpha)
\displaystyle +M_{\alpha}K_1\int_{\tau}^{t} (t-s)^{-\alpha}e^{-b(t-s)} \left\|v(s)\right\|_{E^\alpha} \textrm{d}s,
\end{eqnarray}
where $\displaystyle \Gamma(z)=\int^{\infty}_{0} \xi^{z-1}e^{-\xi} \textrm{d}\xi$, with  $\textrm{Re}(z)>0$, is the  Gamma function. Applying Gronwall's inequality in
\eqref{eq_Gronnwall_norma_finita_2_N} (see \cite[Lemma 6.23]{NolascoAtratores}), we have
\[
 \begin{split}
\nonumber \displaystyle  \left\|v(t)\right\|_{E^\alpha} &\leq \displaystyle [Me^{-b(t-\tau)}\left\|v_{\tau}\right\|_{E^\alpha}+M_\alpha K_2b^{\alpha-1}\Gamma(1-\alpha)]e^{M_{\alpha}K_1\int_{\tau}^{t} (t-s)^{-\alpha}e^{-b(t-s)} \textrm{d}s}\\
\nonumber \displaystyle 
&\leq [Mre^{-b(t-\tau)}+M_\alpha K_2b^{\alpha-1}\Gamma(1-\alpha)]e^{M_\alpha K_1b^{\alpha-1}\Gamma(1-\alpha)}.
\end{split}
\]

Since $b>0$ and $\tau \to -\infty$ implies that
 $t-\tau \to \infty$, we obtain
$$
\lim_{\tau \rightarrow - \infty}\left\|v(t,\tau;v_\tau)\right\|_{E^{\alpha}}=\displaystyle \lim_{t-\tau \rightarrow  \infty}\left\|v(t,\tau;v_\tau)\right\|_{E^{\alpha}} 
\leq \displaystyle R,
$$
where $R=M_\alpha K_2b^{\alpha-1}\Gamma(1-\alpha) e^{M_\alpha K_1b^{\alpha-1}\Gamma(1-\alpha)}>0$ is independent of $t$.

Consequently, this proves that for any bounded set
 $B\subset E^{\alpha}$, there exist a constant $R>0$ and a time
 $\tau_0(B)$ such that
\begin{equation}
\label{desigualdade_S_fortemente_limitado_dissipativo}
S(t,\tau)B\subset B_{E^{\alpha}}(0,R),\quad \mbox{for all $\tau\leq \tau_0(B)$ and $t \geq \tau_{0}(B)$},
\end{equation}
uniformly for all $t\in \mathbb{R}$, where $B_{E^{\alpha}}(0,R)$ is the ball closed in $E^{\alpha}$ with center at the origin and radius $R$, that is, $\{S(t,\tau): t\geq \tau\}$  is strongly pullback bounded dissipative.
\end{proof}

We have shown that the process generated by our problem (\ref{equacaonoabstratanofixo})  is strongly pullback bounded dissipative, but to apply \cite[Teorema 2.23]{NolascoAtratores} and deduce that the problem (\ref{equacaonoabstratanofixo}) has a pullback attractor in $E^{\frac{1}{2}}$ we need to show that $\{S(t,\tau): t\geq \tau\}$ is also pullback asymptotically compact in the sense of \cite[Definition 2.8]{NolascoAtratores}.

Consider the following decomposition of
 $S(t,\tau)$,
$$S(t,\tau)=U(t,\tau) + L(t,\tau) ,$$
where $U(t,\tau)$ is the solution operator for the homogeneous problem
(\ref{equacaonoabstratanofixohomogeneo_S}) and  
\begin{equation}
\label{Lcompact}
L(t,\tau)v_\tau=\int_{\tau}^{t}U(t,s)F(s,S(s,\tau)v_\tau)\textrm{d}s.
\end{equation}

We now show that $\{S(t,\tau): t\geq\tau\}$ is pullback asymptotically compact in $E^{\frac{1}{2}}$ using \cite[Theorem 2.37]{NolascoAtratores}. For this we show that $U(s,\tau)$ decays as $t-\tau \to \infty$ and that $L(t,\tau)$ is compact for all $t>\tau$.

\begin{lemma}
\label{prop_pullback assintoticamente_compacto_atrator}
Suppose that the hypotheses \textbf{(H1)}, \textbf{(H2)}, \textbf{(H3)} and \textbf{(H4)} hold. Then, there exist constant $b>0$ and $M>0$ such that
$$
\left\|U(t,\tau)\right\|_{\mathcal{L}(E^{\frac{1}{2}})} \leq M e^{-b(t-\tau)},\quad \mbox{for all } t\geq \tau.
$$
Moreover, $L(t,\tau)$ is a  compact operator from $E^{\frac{1}{2}}$ into itself for all $t>\tau$. In particular, $\{S(t,\tau): t\geq\tau\}$ is pullback asymptotically compact in  $E^{\frac{1}{2}}$.
\end{lemma}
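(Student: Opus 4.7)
The plan is to extract the first assertion directly from the decay estimate already proved in Lemma \ref{decaimentoexponencial}, and to prove compactness of $L(t,\tau)$ by using the smoothing estimate (\ref{decaimentoexponencialdoprocesso2}) to lift the range of $L(t,\tau)$ into a strictly higher fractional power space $E^\alpha$ with $\alpha>1/2$, which embeds compactly into $E^{1/2}$. The pullback asymptotic compactness then follows from the decomposition $S(t,\tau)=U(t,\tau)+L(t,\tau)$ via Theorem 2.37 of \cite{NolascoAtratores}.

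The first claim is immediate: setting $\alpha=1/2$ in (\ref{decaimentoexponencialdoprocesso}) yields $\|U(t,\tau)\|_{\mathcal{L}(E^{1/2})}\le Me^{-b(t-\tau)}$ for all $t\ge\tau$, with constants $M,b>0$ independent of $t,\tau$. For compactness of $L(t,\tau)$, fix $t>\tau$ and a bounded set $B\subset E^{1/2}$, and pick any $\alpha$ with $1/2<\alpha<\min\{1,n/4\}$. The first step is to obtain a uniform orbit bound: by Theorem \ref{existenciaglobaldesolucoes} (together with the continuous dependence on initial data and $s$), there exists $C_1=C_1(B,t,\tau)>0$ such that $\|S(s,\tau)v_\tau\|_{E^{1/2}}\le C_1$ for all $s\in[\tau,t]$ and $v_\tau\in B$. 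Lemma \ref{limitacaoaplicacaoabstrata} then gives a uniform bound $\|F(s,S(s,\tau)v_\tau)\|_{E^0}\le K_1 C_1+K_2=:C_2$.

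The second step is the smoothing estimate: applying (\ref{decaimentoexponencialdoprocesso2}) on the bounded interval $[\tau,t]$ inside the definition (\ref{Lcompact}) yields
\begin{equation*}
\|L(t,\tau)v_\tau\|_{E^\alpha}\le\int_\tau^t M_\alpha(t-s)^{-\alpha}e^{-b(t-s)}\|F(s,S(s,\tau)v_\tau)\|_{E^0}\,ds\le C_2 M_\alpha\int_\tau^t(t-s)^{-\alpha}\,ds,
\end{equation*}
which is finite because $\alpha<1$. Hence $L(t,\tau)B$ is bounded in $E^\alpha$. Because $A(t)$ has compact resolvent on the bounded domain $\mathcal{O}$ (via the compact Sobolev embedding $H^1(\mathcal{O})\hookrightarrow\hookrightarrow L^2(\mathcal{O})$), the embedding $E^\alpha\hookrightarrow\hookrightarrow E^{1/2}$ is compact for $\alpha>1/2$, so $L(t,\tau)B$ is precompact in $E^{1/2}$. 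This shows that $L(t,\tau):E^{1/2}\to E^{1/2}$ is a compact operator for every $t>\tau$.

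For the final assertion, the decomposition $S(t,\tau)=U(t,\tau)+L(t,\tau)$, together with $\|U(t,\tau)\|_{\mathcal{L}(E^{1/2})}\to 0$ as $t-\tau\to\infty$ and the compactness of $L(t,\tau)$, is precisely the hypothesis of Theorem 2.37 of \cite{NolascoAtratores}, which yields pullback asymptotic compactness of $\{S(t,\tau):t\ge\tau\}$ in $E^{1/2}$. The main technical obstacle is the balancing in the $E^\alpha$ estimate: one must choose $\alpha$ strictly greater than $1/2$ to invoke compact embedding, but strictly less than $1$ to keep the singular kernel $(t-s)^{-\alpha}$ integrable; the admissible window $1/2<\alpha<\min\{1,n/4\}$ is nonempty for $n\ge 3$, and all other ingredients (the uniform orbit bound, Lemma \ref{limitacaoaplicacaoabstrata}, and the smoothing estimate (\ref{decaimentoexponencialdoprocesso2})) have been established in the preceding sections.
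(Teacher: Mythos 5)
Your proof is correct and follows essentially the same route as the paper: the exponential decay of $U(t,\tau)$ on $E^{\frac{1}{2}}$ from Lemma \ref{decaimentoexponencial}, the smoothing estimate \eqref{decaimentoexponencialdoprocesso2} combined with Lemma \ref{limitacaoaplicacaoabstrata} to show that $L(t,\tau)$ maps bounded sets of $E^{\frac{1}{2}}$ into bounded sets of $E^{\alpha}$ for some $\alpha>\frac{1}{2}$, the compact embedding $E^{\alpha}\hookrightarrow E^{\frac{1}{2}}$, and then \cite[Theorem 2.37]{NolascoAtratores}. The only cosmetic difference is that you bound the orbit on the finite interval $[\tau,t]$ via Theorem \ref{existenciaglobaldesolucoes} and estimate $\int_\tau^t(t-s)^{-\alpha}\,\textrm{d}s$ directly, whereas the paper invokes the uniform-in-the-past bound of Lemma \ref{limitacaotinfinitodassoluções} and controls the singular integral by $b^{\alpha-1}\Gamma(1-\alpha)$; both suffice for the fixed-$(t,\tau)$ compactness that is required.
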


\begin{proof}
From Lemma \ref{decaimentoexponencial}, the evolution process $U(t,\tau)$ associated to the homogeneous problem
(\ref{equacaonoabstratanofixohomogeneo_S}) has exponential decay, that is, there exist constant $b>0$ and $M>0$ such that
$$
\left\|U(t,\tau)w\right\|_{E^{\frac{1}{2}}} \leq Me^{-b(t-\tau)}\left\|w\right\|_{E^{\frac{1}{2}}}, \quad t\geq \tau,
$$
for all $w\in E^{\frac{1}{2}}$.

Now, let us analyze the operator $L(t,\tau)$ given by (\ref{Lcompact}). Using Lemmas \ref{limitacaoaplicacaoabstrata} and   \ref{decaimentoexponencial}, we have 
\begin{eqnarray}
\nonumber \displaystyle
\left\|L(t,\tau)v_\tau\right\|_{E^{\alpha}}& \leq & \int_{\tau}^{t}  \left\|U(t,s)F(s,S(s,\tau)v_\tau)\right\|_{E^{\alpha}}\textrm{d}s \\
\nonumber \displaystyle & \leq & M_{\alpha}\int_{\tau}^{t}(t-s)^{-\alpha}e^{-b(t-s)}(K_{1}\left\|v(s,\tau;v_{\tau})\right\|_{E^\frac{1}{2}}+K_{2})  \textrm{d}s,
\end{eqnarray} 
for all $\displaystyle \frac{1}{2}\leq\alpha<1$.

By Lemma \ref{limitacaotinfinitodassoluções}, if
$\left\|v_{\tau}\right\|_{E^{\frac{1}{2}}}\leq r$ for some $r>0$, then  $v(s,\tau;v_{\tau})$ is bounded in $E^{\frac{1}{2}}$, in the past, uniformly for all
$s \in \mathbb{R}.$ Thus, there exists a constant $R>0$ independent of $s \in \mathbb{R}$ such that
$$
\left\|L(t,\tau)v_\tau\right\|_{E^{\alpha}}\leq \displaystyle M_{\alpha}(K_{1}R+K_{2})b^{\alpha-1} \Gamma(1-\alpha).
$$
So, $L(t,\tau)$ takes bounded sets of $E^{\frac{1}{2}}$ into bounded sets of $E^{\alpha}$.

Since, the embedding $E^{\alpha} \hookrightarrow E^{\frac{1}{2}}$ is compact, for $\displaystyle \alpha>\frac{1}{2}$, then
we conclude that $L(t,\tau)$ is a compact operator
in $E^{\frac{1}{2}}$ for all $t>\tau$.

Therefore, by \cite[Theorem 2.37]{NolascoAtratores}, $\{S(t,\tau): t\geq\tau\}$ is strongly pullback asymptotically compact in $E^{\frac{1}{2}}$ and, consequently, it is also
pullback asymptotically compact in $E^{\frac{1}{2}}$.

\end{proof}

Finally, we conclude the main result of this work.

\begin{theorem}
\label{teo_existencia_pullback_atrator}
Suppose that the hypotheses \textbf{(H1)}, \textbf{(H2)}, \textbf{(H3)} and \textbf{(H4)} hold. Then, $\{S(t,\tau): t\geq\tau\}$ has a pullback attractor  $\{\mathcal{A}(t): t\in\mathbb{R}\}$  in $E^{\frac{1}{2}}$ and
\begin{equation}
\label{NOVOregattractor}
\bigcup_{t\in \mathbb{R}}\mathcal{A}(t)\; \text{is bounded in $E^{\frac{1}{2}}$}.
\end{equation}
\end{theorem}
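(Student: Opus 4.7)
The plan is to invoke the abstract existence theorem for pullback attractors [NolascoAtratores, Theorem 2.23], whose hypotheses on an evolution process on $E^{\frac{1}{2}}$ are exactly (i) strong pullback bounded dissipativity and (ii) pullback asymptotic compactness. Lemma \ref{limitacaotinfinitodassoluções} supplies (i): it exhibits the closed ball $B_{E^{\frac{1}{2}}}(0,R)$ as a strongly pullback absorbing family, with the radius $R>0$ \emph{independent of} $t\in\mathbb{R}$, see \eqref{desigualdade_S_fortemente_limitado_dissipativo}. Lemma \ref{prop_pullback assintoticamente_compacto_atrator} supplies (ii) via the decomposition $S(t,\tau)=U(t,\tau)+L(t,\tau)$, with $U(t,\tau)$ exponentially decaying in $\mathcal{L}(E^{\frac{1}{2}})$ by Lemma \ref{decaimentoexponencial} and $L(t,\tau)$ compact for every $t>\tau$ via the smoothing $L(t,\tau)\colon E^{\frac{1}{2}}\to E^{\alpha}$ combined with the compact embedding $E^{\alpha}\hookrightarrow E^{\frac{1}{2}}$ for $\alpha>\tfrac12$. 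The cited abstract theorem then delivers, with no further estimates, the existence of a pullback attractor $\{\mathcal{A}(t):t\in\mathbb{R}\}$ in $E^{\frac{1}{2}}$.

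For the uniform boundedness \eqref{NOVOregattractor}, I would exploit that the absorbing radius $R$ in Lemma \ref{limitacaotinfinitodassoluções} is the same for every $t$. By invariance, $\mathcal{A}(t)=S(t,\tau)\mathcal{A}(\tau)$ for all $t\geq \tau$, and since $\mathcal{A}(\tau)$ is bounded in $E^{\frac{1}{2}}$, the pullback absorbing property yields $\mathcal{A}(t)\subset B_{E^{\frac{1}{2}}}(0,R)$ for every fixed $t$, with the \emph{same} $R$. Taking the union over $t\in\mathbb{R}$ then gives $\bigcup_{t\in\mathbb{R}}\mathcal{A}(t)\subset B_{E^{\frac{1}{2}}}(0,R)$, which is \eqref{NOVOregattractor}.

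No substantial obstacle is expected at this stage, since the analytical work — the Gronwall argument in $E^{\alpha}$ after the exponential-decay estimate for $U(t,\tau)$, and the compactness obtained via the smoothing $L(t,\tau)\colon E^{\frac{1}{2}}\to E^{\alpha}$ — has already been carried out in Lemmas \ref{limitacaotinfinitodassoluções} and \ref{prop_pullback assintoticamente_compacto_atrator}. The only point that truly deserves emphasis is that the $t$-uniformity of the absorbing radius, which ultimately traces back to the uniform exponential decay guaranteed by \textbf{(H4)} via Lemma \ref{decaimento_exponencial_do_processo}, is precisely what upgrades the pointwise-in-$t$ existence of $\mathcal{A}(t)$ to the global-in-time boundedness \eqref{NOVOregattractor}.
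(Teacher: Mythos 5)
Your proposal is correct and follows essentially the same route as the paper: both cite Lemmas \ref{limitacaotinfinitodassoluções} and \ref{prop_pullback assintoticamente_compacto_atrator} to verify strong pullback bounded dissipativity and pullback asymptotic compactness, invoke the abstract existence theorem of Carvalho--Langa--Robinson, and derive \eqref{NOVOregattractor} from the $t$-uniform absorbing radius in \eqref{desigualdade_S_fortemente_limitado_dissipativo}. Your added remark on how invariance plus the uniform radius yields the boundedness of the union is just a slightly more explicit version of the paper's one-line conclusion.
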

\begin{proof}
By Lemmas \ref{limitacaotinfinitodassoluções} and \ref{prop_pullback assintoticamente_compacto_atrator}, $\{S(t,\tau): t\geq\tau\}$ is strongly pullback bounded dissipative and pullback asymptotically compact. Thus applying \cite[Teorema 2.23]{NolascoAtratores}, we have $\{S(t,\tau): t\geq\tau\}$  has a pullback attractor in $E^{\frac{1}{2}}$. Now, observe that due to (\ref{desigualdade_S_fortemente_limitado_dissipativo}) we concluded  (\ref{NOVOregattractor}).

\end{proof}



\section{Example} \label{Sec_Final_Ex_Difeo}

\vspace{0.2cm}

Now, we will see an application in $\mathbb{R}^3$ 
with linear and separable diffeomorphism, that is, for a spatially linear domain transformation, that satisfies the conditions of this work. 

For that, consider the domain in $\mathbb{R}^3$ given by
$$\mathcal{O}=\{(y_1,y_2,y_{3})\in \mathbb{R}^3:\; y^2_1 +y^2_2+y_{3}^{2} < 1 \}$$
and the diffeomorphism given by
\begin{equation}
    \nonumber
    \begin{array}{rccl}
    \displaystyle r :&   \mathbb{R} \times \overline{\mathcal{O}} &\rightarrow& \mathbb{R}^{3} \\ 
    \displaystyle 	 &   (t,y_1,y_2,y_{3}) & \mapsto & \displaystyle r(t,y_1,y_2,y_{3})=\frac{1}{e^{-t^{2}}+1}(y_1,y_2,y_{3})
    \end{array}
    \end{equation}
with inverse
$$r^{-1}(t,x_1,x_2,x_{3})=(e^{-t^{2}}+1)(x_1,x_2,x_{3}).$$

It is easy to verify that the diffeomorphism above satisfies the hypotheses \textbf{(H1)-(H4)} of this paper.

Note that some works only consider this particular case of diffeomorphism. For example, in the case of a  
 semilinear heat equation with
homogeneous Dirichlet boundary conditions, this type of diffeomorphism appears in an application in \cite{PeterKloeden}. For the case of a wave equation  with
homogeneous Dirichlet boundary conditions, all the work
\cite{Matofu} is carried out for this type of diffeomorphism and with domain in
$\mathbb{R}^3$.

\vspace{0.2cm}


\end{document}